\providecommand{\U}[1]{\protect\rule{.1in}{.1in}}
\providecommand{\U}[1]{\protect\rule{.1in}{.1in}}
\providecommand{\U}[1]{\protect\rule{.1in}{.1in}}
\newtheorem{theorem}{Theorem}
\newtheorem{corollary}[theorem]{Corollary}
\newtheorem{definition}[theorem]{Definition}
\newtheorem{lemma}[theorem]{Lemma}
\newtheorem{notation}[theorem]{Notation}
\newtheorem{proposition}[theorem]{Proposition}
\newtheorem{remark}[theorem]{Remark}
\newenvironment{proof}[1][Proof]{\textbf{#1.} }{\ \rule{0.5em}{0.5em}}
\begin{document}

\title{Differential Geometry of Microlinear Fr\"{o}licher Spaces I}
\author{Hirokazu Nishimura\\Institute of Mathematics, University of Tsukuba\\Tsukuba, Ibaraki, 305-8571, Japan}
\maketitle

\begin{abstract}
The central object of synthetic differential geometry is microlinear spaces.
In our previous paper [Microlinearity in Fr\"{o}licher spaces -beyond the
regnant philosophy of manifolds-, International Journal of Pure and Applied
Mathematics, 60 (2010), 15-24] we have emancipated microlinearity from within
well-adapted models to Fr\"{o}licher spaces. Therein we have shown that
Fr\"{o}licher spaces which are microlinear as well as Weil exponentiable form
a cartesian closed category. To make sure that such Fr\"{o}licher spaces are
the central object of infinite-dimensional differential geometry, we develop
the theory of vector fields on them in this paper. Our principal result is
that all vector fields on such a Fr\"{o}licher space form a Lie algebra.

\end{abstract}

\section{Introduction}

It is often forgotten that Newton, Leibniz, Euler and many other
mathematicians in the 17th and 18th centuries developed differential calculus
and analysis by using nilpotent infinitesimals without any recourse to limits.
It is in the 19th century, in the midst of the Industrial Revolution in
Europe, that nilpotent infinitesimals were overtaken by so-called
$\varepsilon-\delta$ arguments. In the middle of the 20th century moribund
nilpotent infinitesimals were revived by Grothendieck in algebraic geometry
and by Lawvere in differential geometry. Differential geomery using nilpotent
infinitesimals consistently and coherently is now called \textit{synthetic
differential geometry}, since it prefers synthetic arguments to dull
calculations, as was the case in ancient Euclidean geometry. The central
object of study under synthetic differential geometry is microlinear spaces,
while the central object of study under orthodox differential geometry has
been finite-dimensional smooth manifolds. Since we can not expect to see
nilpotent infinitesimals in our real world, model theory of synthetic
differential geometry was developed vigorously by Dubuc and others around 1980
by using techniques of topos theory. For a standard reference on model theory
of synthetic differential geometry, the reader is referred to Kock's
well-written \cite{kock}.

It is well known that the category of smooth manifolds, whether
finite-dimensional or infinite-dimensional (modelled after Hilbert spaces,
Banach spaces, Fr\'{e}chet spaces or, most generally, convenient vector
spaces) is by no means cartesian closed, while the category of Fr\"{o}licher
spaces and smooth mappings among them is certainly so. Although nilpotent
infinitesimals are not visible in our real world, which has harassed such
philosophers of the 17th and 18th centuries as extremely skeptical Berkeley,
the notion of Weil functor is applicable to both finite-dimensional and
infinite-dimensional smooth manifolds. The notion of Weil functor has been
generalized in our previous paper \cite{nishi} so that it is applicable to
Fr\"{o}licher spaces. Therein we have delineated the class of Fr\"{o}licher
spaces which give credence to the illusion that Weil functors are the
exponentiation by infinitesimal objects in the shade. Such Fr\"{o}licher
spaces, which were shown to form a cartesian closed category, are called
\textit{Weil exponentiable}. In our subsequent paper \cite{nishi1} we have
finally succeeded in externalizing the notion of microlinearity, so that it is
applicable to Fr\"{o}licher spaces in the real world. Therein we have shown
that Fr\"{o}licher spaces that are Weil exponentiable and microlinear form a
cartesian closed category. We would like to hold that such Fr\"{o}licher
spaces are the central object of study for infinite-dimensional differential geometry.

The aim in our series of papers, the first of which is the present paper under
your eyes, is to develop differential geometry for Fr\"{o}licher spaces that
are Weil exponentiable and microlinear. The present paper is devoted to
tangent spaces and vector fields. Its principal result is that vector fields
on such a Fr\"{o}licher space form a Lie algebra. The reader should note that
our present mission for microlinear Fr\"{o}licher spaces is not so easy as
that for smooth manifolds in orthodox differential geometry, since coordinates
are no longer available nor are we admitted to identity vector fields with
their associated differential operators.

\section{Preliminaries}

\subsection{Fr\"{o}licher Spaces}

Fr\"{o}licher and his followers have vigorously and consistently developed a
general theory of smooth spaces, often called \textit{Fr\"{o}licher spaces}
for his celebrity, which were intended to be the \textit{underlying set
theory} for infinite-dimensional differential geometry in a sense. A
Fr\"{o}licher space is an underlying set endowed with a class of real-valued
functions on it (simply called \textit{structure} \textit{functions}) and a
class of mappings from the set $\mathbf{R}$ of real numbers to the underlying
set (called \textit{structure} \textit{curves}) subject to the condition that
structure curves and structure functions should compose so as to yield smooth
mappings from $\mathbf{R}$ to itself. It is required that the class of
structure functions and that of structure curves should determine each other
so that each of the two classes is maximal with respect to the other as far as
they abide by the above condition. What is most important among many nice
properties about the category $\mathbf{FS}$ of Fr\"{o}licher spaces and smooth
mappings is that it is cartesian closed, while neither the category of
finite-dimensional smooth manifolds nor that of infinite-dimensional smooth
manifolds modelled after any infinite-dimensional vector spaces such as
Hilbert spaces, Banach spaces, Fr\'{e}chet spaces or the like is so at all.
For a standard reference on Fr\"{o}licher spaces the reader is referred to
\cite{fro}.

\subsection{Weil Algebras and Infinitesimal Objects}

The notion of a \textit{Weil algebra} was introduced by Weil himself in
\cite{wei}. We denote by $\mathbf{W}$ the category of Weil algebras. Roughly
speaking, each Weil algebra corresponds to an infinitesimal object in the
shade. By way of example, the Weil algebra $\mathbf{R}[X]/(X^{2})$ (=the
quotient ring of the polynomial ring $\mathbf{R}[X]$\ of an indeterminate
$X$\ modulo the ideal $(X^{2})$\ generated by $X^{2}$) corresponds to the
infinitesimal object of first-order nilpotent infinitesimals, while the Weil
algebra $\mathbf{R}[X]/(X^{3})$ corresponds to the infinitesimal object of
second-order nilpotent infinitesimals. Although an infinitesimal object is
undoubtedly imaginary in the real world, as has harassed both mathematicians
and philosophers of the 17th and the 18th centuries because mathematicians at
that time preferred to talk infinitesimal objects as if they were real
entities, each Weil algebra yields its corresponding \textit{Weil functor} on
the category of smooth manifolds of some kind to itself, which is no doubt a
real entity. Intuitively speaking, the Weil functor corresponding to a Weil
algebra stands for the exponentiation by the infinitesimal object
corresponding to the Weil algebra at issue. For Weil functors on the category
of finite-dimensional smooth manifolds, the reader is referred to \S 35 of
\cite{kolar}, while the reader can find a readable treatment of Weil functors
on the category of smooth manifolds modelled on convenient vector spaces in
\S 31 of \cite{kri}.

\textit{Synthetic differential geometry }(usually abbreviated to SDG), which
is a kind of differential geometry with a cornucopia of nilpotent
infinitesimals, was forced to invent its models, in which nilpotent
infinitesimals were visible. For a standard textbook on SDG, the reader is
referred to \cite{lav}, while he or she is referred to \cite{kock} for the
model theory of SDG vigorously constructed by Dubuc \cite{dub} and others.
Although we do not get involved in SDG herein, we will exploit locutions in
terms of infinitesimal objects so as to make the paper highly readable. Thus
we prefer to write $W_{D}$\ and $W_{D_{2}}$\ in place of $\mathbf{R}%
[X]/(X^{2})$ and $\mathbf{R}[X]/(X^{3})$ respectively, where $D$ stands for
the infinitesimal object of first-order nilpotent infinitesimals, and $D_{2}%
$\ stands for the infinitesimal object of second-order nilpotent
infinitesimals. To Newton and Leibniz, $D$ stood for
\[
\{d\in\mathbf{R}\mid d^{2}=0\}
\]
while $D_{2}$\ stood for
\[
\{d\in\mathbf{R}\mid d^{3}=0\}
\]
We will write $W_{d\in D_{2}\mapsto d^{2}\in D}$ for the homomorphim of Weil
algebras $\mathbf{R}[X]/(X^{2})\rightarrow\mathbf{R}[X]/(X^{3})$ induced by
the homomorphism $X\rightarrow X^{2}$ of the polynomial ring \ $\mathbf{R}[X]
$ to itself. Such locutions are justifiable, because the category $\mathbf{W}
$ of Weil algebras in the real world and the category of infinitesimal objects
in the shade are dual to each other in a sense. To familiarize himself or
herself with such locutions, the reader is strongly encouraged to read the
first two chapters of \cite{lav}, even if he or she is not interested in SDG
at all.

We need to fix notation and terminology for simplicial objects, which form an
important subclass of infinitesimal objects. \textit{Simplicial objects} are
infinitesimal objects of the form
\begin{align*}
&  D^{n}\{\mathfrak{p}\}\\
&  =\{(d_{1},...,d_{n})\in D^{n}\mid d_{i_{1}}...d_{i_{k}}=0\text{ }%
(\forall(i_{1},...,i_{k})\in\mathfrak{p)\}}%
\end{align*}
where $\mathfrak{p}$\ is a finite set of finite sequences $(i_{1},...,i_{k}%
)$\ of natural numbers between $1$ and $n$, including the endpoints, with
$i_{1}<...<i_{k}$. If $\mathfrak{p}$\ is empty, $D^{n}\{\mathfrak{p}\}$ is
$D^{n}$ itself. If $\mathfrak{p}$ consists of all the binary sequences, then
$D^{n}\{\mathfrak{p}\}$ represents $D(n)$ in the standard terminology of SDG.
Given two simplicial objects $D^{m}\{\mathfrak{p}\}$\ and $D^{n}%
\{\mathfrak{q}\}$, we define a simplicial object $D^{m}\{\mathfrak{p}\}\oplus
D^{n}\{\mathfrak{q}\}$ to be
\[
D^{m+n}\{\mathfrak{p}\oplus\mathfrak{q}\}
\]
where
\begin{align*}
&  \mathfrak{p}\oplus\mathfrak{q}\\
&  =\mathfrak{p}\cup\{(j_{1}+m,...,j_{k}+m)\mid(j_{1},...,j_{k})\in
\mathfrak{q}\}\\
\cup\{(i,j+m)  &  \mid1\leq i\leq m,\ 1\leq j\leq n\}
\end{align*}
Since the operation $\oplus$\ is associative, we can combine any finite number
of simplicial objects by $\oplus$ without bothering about how to insert
parentheses. Given morphisms of simplicial objects $\Phi_{i}:D^{m_{i}%
}\{\mathfrak{p}_{i}\}\rightarrow D^{m}\{\mathfrak{p}\}\ (1\leq i\leq n)$,
there exists a unique morphism of simplicial objects $\Phi:D^{m_{1}%
}\{\mathfrak{p}_{1}\}\oplus...\oplus D^{m_{n}}\{\mathfrak{p}_{n}\}\rightarrow
D^{m}\{\mathfrak{p}\}$ whose restriction to $D^{m_{i}}\{\mathfrak{p}_{i}\}$
coincides with $\Phi_{i}$ for each $i$. We denote this $\Phi$\ by $\Phi
_{1}\oplus...\oplus\Phi_{n}$. We write $D(n)$ for $\{(d,...,d)\in D^{n}\mid
d_{i}d_{j}=0$ for any $i\neq j\}$.

\subsection{Microlinearity}

In \cite{nishi} we have discussed how to assign, to each pair $(X,W)$\ of a
Fr\"{o}licher space $X$ and a Weil algebra $W$,\ another Fr\"{o}licher space
$X\otimes W$\ called the \textit{Weil prolongation of} $X$ \textit{with
respect to} $W$, which is naturally extended to a bifunctor $\mathbf{FS}%
\times\mathbf{W\rightarrow FS}$, and then to show that the functor
$\cdot\otimes W:\mathbf{FS\rightarrow FS}$ is product-preserving for any Weil
algebra $W$. Weil prolongations are well-known as \textit{Weil functors} for
finite-dimensional and infinite-dimensional smooth manifolds in orthodox
differential geometry, as we have already touched upon in the preceding subsection.

The central object of study in SDG is \textit{microlinear} spaces. Although
the notion of a manifold (=a pasting of copies of a certain linear space) is
defined on the local level, the notion of microlinearity is defined absolutely
on the genuinely infinitesimal level. For the historical account of
microlinearity, the reader is referred to \S \S 2.4 of \cite{lav} or Appendix
D of \cite{kock}. To get an adequately restricted cartesian closed subcategory
of Fr\"{o}licher spaces, we have emancipated microlinearity from within a
well-adapted model of SDG to Fr\"{o}licher spaces in the real world in
\cite{nishi1}. Recall that a Fr\"{o}licher space $X$ is called
\textit{microlinear} providing that any finite limit diagram $\mathcal{D}$ in
$\mathbf{W}$ yields a limit diagram $X\otimes\mathcal{D}$ in $\mathbf{FS}$,
where $X\otimes\mathcal{D}$ is obtained from $\mathcal{D}$ by putting
$X\otimes$ to the left of every object and every morphism in $\mathcal{D}$. As
we have discussed there, all convenient vector spaces are microlinear, so that
all $C^{\infty}$-manifolds in the sense of \cite{kri} (cf. Section 27) are
also microlinear.

We have no reason to hold that all Fr\"{o}licher spaces credit Weil
prolongations as exponentiation by infinitesimal objects in the shade.
Therefore we need a notion which distinguishes Fr\"{o}licher spaces that do so
from those that do not. A Fr\"{o}licher space $X$ is called \textit{Weil
exponentiable }if
\begin{equation}
(X\otimes(W_{1}\otimes_{\infty}W_{2}))^{Y}=(X\otimes W_{1})^{Y}\otimes
W_{2}\label{2.1}%
\end{equation}
holds naturally for any Fr\"{o}licher space $Y$ and any Weil algebras $W_{1}$
and $W_{2}$. If $Y=1$, then (\ref{2.1}) degenerates into
\begin{equation}
X\otimes(W_{1}\otimes_{\infty}W_{2})=(X\otimes W_{1})\otimes W_{2}\label{2.2}%
\end{equation}
If $W_{1}=\mathbf{R}$, then (\ref{2.1}) degenerates into
\begin{equation}
(X\otimes W_{2})^{Y}=X^{Y}\otimes W_{2}\label{2.3}%
\end{equation}
We have shown in \cite{nishi} that all convenient vector spaces are Weil
exponentiable, so that all $C^{\infty}$-manifolds in the sense of \cite{kri}
(cf. Section 27) are Weil exponentiable.

We have demonstrated in \cite{nishi1} that all Fr\"{o}licher spaces that are
microlinear and Weil exponentiable form a cartesian closed category. In the
sequel $M$ is assumed to be such a Fr\"{o}licher space.

\subsection{Comma Categories}

In the next section we will work on the category $\mathbf{FS}/M$ of
$\mathbf{FS}$ over $M$, for which we need to know that the product of
$N_{1}\rightarrow M$ and $N_{1}\rightarrow M$ in $\mathbf{FS}/M$ is no other
than the fibred product $N_{1}\times_{M}N_{2}\rightarrow M$. For the general
theory of comma categories, the reader is referred to \cite{irf1}, \cite{irf2}
and \cite{irf3}.

\section{\label{s3}Tangent Bundles}

\begin{proposition}
\label{t3.1}If $M$ is a microlinear Fr\"{o}licher space, then we have
\[
M\otimes W_{D(2)}=(M\otimes W_{D})\times_{M}(M\otimes W_{D})
\]

\end{proposition}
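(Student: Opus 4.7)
The plan is to recognize $W_{D(2)}$ as a finite limit in the category $\mathbf{W}$ of Weil algebras and then invoke microlinearity of $M$ to transport that limit into $\mathbf{FS}$, where it will be manifestly the asserted fibred product.

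First, I would write down a candidate limit diagram in $\mathbf{W}$. The infinitesimal object $D(2)$ is intuitively the union of the two coordinate axes $D \times \{0\}$ and $\{0\} \times D$ inside $D^{2}$, glued along the origin, so it should be the pushout of $D \leftarrow 1 \rightarrow D$ in the category of infinitesimal objects. Dually, in $\mathbf{W}$, I expect the square
\[
\begin{array}{ccc}
W_{D(2)} & \rightarrow & W_{D} \\
\downarrow & & \downarrow \\
W_{D} & \rightarrow & \mathbf{R}
\end{array}
\]
to be a pullback, where the two maps $W_{D(2)} \rightarrow W_{D}$ are induced by the inclusions of each axis into $D(2)$, and the two maps $W_{D} \rightarrow \mathbf{R}$ send the nilpotent generator to $0$.

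Second, I would verify this pullback at the algebra level by a short direct computation: writing $W_{D(2)} = \mathbf{R}[X,Y]/(X^{2},XY,Y^{2})$, every element has a unique representation $a + bX + cY$, and such an element is determined exactly by the pair $(a+bX,\, a+cY) \in W_{D}\times W_{D}$ whose projections to $\mathbf{R}$ agree. This is precisely the pullback of $W_{D}\rightrightarrows \mathbf{R}\leftleftarrows W_{D}$, and the algebra structure matches.

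Third, since $M$ is microlinear, applying the functor $M\otimes\,\cdot$ to the limit diagram above yields a limit diagram in $\mathbf{FS}$, i.e., a pullback
\[
\begin{array}{ccc}
M\otimes W_{D(2)} & \rightarrow & M\otimes W_{D} \\
\downarrow & & \downarrow \\
M\otimes W_{D} & \rightarrow & M\otimes \mathbf{R} = M.
\end{array}
\]
By the remark in the Comma Categories subsection, a pullback in $\mathbf{FS}$ of two objects over $M$ is exactly the fibred product, so this identifies $M\otimes W_{D(2)}$ with $(M\otimes W_{D})\times_{M}(M\otimes W_{D})$, as required.

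The only genuinely nontrivial step is the algebraic verification that the square in $\mathbf{W}$ is a pullback; once that is in hand, microlinearity does all the remaining work. I do not anticipate any serious obstacle beyond being careful that the two projections $W_{D(2)}\to W_{D}$ are the ones induced by the two coordinate inclusions $D\hookrightarrow D(2)$, so that after applying $M\otimes\,\cdot$ the two legs of the resulting cospan are indeed the two canonical projections $M\otimes W_{D}\to M$ used to form the fibred product on the right-hand side.
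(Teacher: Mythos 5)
Your proposal is correct and follows essentially the same route as the paper: exhibit the square $W_{D(2)}\rightarrow W_{D}\rightrightarrows W_{1}=\mathbf{R}$ as a pullback in $\mathbf{W}$ (with the legs induced by the two axis inclusions $d\mapsto(d,0)$ and $d\mapsto(0,d)$) and then apply $M\otimes\cdot$ using microlinearity. The only difference is that you supply the explicit algebra-level verification of the pullback, which the paper simply asserts; this is a welcome addition rather than a deviation.
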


\begin{proof}
We have the following pullback diagram of Weil algebras:
\begin{equation}%
\begin{array}
[c]{ccc}%
W_{D(2)} & \rightarrow & W_{D}\\
\downarrow &  & \downarrow\\
W_{D} & \rightarrow & W_{1}%
\end{array}
\label{3.1.1}%
\end{equation}
where the left vertical arrow is induced by the mapping
\[
d\in D\mapsto(d,0)\in D(2)
\]
while the upper horizontal arrow is induced by the mapping
\[
d\in D\mapsto(0,d)\in D(2)
\]
The above pullback diagram naturally gives rise to the following pullback
diagram because of the microlinearity of $M$:
\[%
\begin{array}
[c]{ccc}%
M\otimes W_{D(2)} & \rightarrow & M\otimes W_{D}\\
\downarrow &  & \downarrow\\
M\otimes W_{D} & \rightarrow & M\otimes W_{1}=M
\end{array}
\]
This completes the proof.
\end{proof}

\begin{corollary}
The canonical projection $M\otimes W_{D(2)}\rightarrow M$ is a biproduct of
two copies of $M\otimes W_{D}\rightarrow M$ in the comma category
$\mathbf{FS}/M$.
\end{corollary}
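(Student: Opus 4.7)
The plan is to read the biproduct structure off Proposition~\ref{t3.1}. By the remark at the end of \S2.4 that products in $\mathbf{FS}/M$ are computed as fibred products over $M$, the proposition already identifies $M \otimes W_{D(2)} \to M$ with the categorical product of two copies of $M \otimes W_{D} \to M$ in $\mathbf{FS}/M$; the two product projections $\pi_{i}: M \otimes W_{D(2)} \to M \otimes W_{D}$ are the Weil prolongations of the algebra maps induced by the infinitesimal morphisms $\phi_{1}: d \mapsto (d, 0)$ and $\phi_{2}: d \mapsto (0, d)$ from $D$ into $D(2)$.

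For the injection half of the biproduct I would use the coordinate projections $\psi_{i}: D(2) \to D$, $(d_{1}, d_{2}) \mapsto d_{i}$, which are well-defined morphisms of infinitesimal objects (the constraint $d_{1} d_{2} = 0$ plays no role here). By the contravariant duality between infinitesimal objects and Weil algebras they induce homomorphisms $W_{D} \to W_{D(2)}$, and hence by Weil prolongation the candidate injections $\iota_{i}: M \otimes W_{D} \to M \otimes W_{D(2)}$ over $M$. The biproduct identities then reduce to the transparent infinitesimal-level calculations $\psi_{i} \circ \phi_{i} = \mathrm{id}_{D}$ and $\psi_{j} \circ \phi_{i} = 0$ for $i \neq j$ (the latter factoring through the terminal object $1$), combined with functoriality of $M \otimes (\cdot)$: the first gives $\pi_{i} \circ \iota_{i} = \mathrm{id}_{M \otimes W_{D}}$ and the second gives $\pi_{j} \circ \iota_{i} = s \circ p$ for $j \neq i$, where $p: M \otimes W_{D} \to M$ is the canonical projection (arising from the augmentation $W_{D} \to \mathbf{R}$) and $s: M \to M \otimes W_{D}$ is the zero section (arising from the unit $\mathbf{R} \to W_{D}$). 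This $s \circ p$ is the zero morphism in the pointed subcategory of $\mathbf{FS}/M$ into which the situation naturally sits.

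The only real delicacy is keeping the various direction reversals straight --- the duality between $\mathbf{W}$ and the category of infinitesimal objects on the one hand, and the distinction between projections versus injections on the other --- but no deeper input than Proposition~\ref{t3.1} should be required.
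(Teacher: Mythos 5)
Your argument is correct, and it in fact supplies more than the paper does: the paper states this corollary with no proof at all, treating it as immediate from Proposition \ref{t3.1} together with the earlier remark that products in the comma category $\mathbf{FS}/M$ are fibred products over $M$ --- which only accounts for the product half of the biproduct structure. Your injection half is the right one, and it is exactly what the paper uses implicitly afterwards: the homomorphisms $W_{\psi_{i}}:W_{D}\rightarrow W_{D(2)}$ induced by the coordinate projections $(d_{1},d_{2})\in D(2)\mapsto d_{i}\in D$ reappear as the $p_{i}^{D(2)}$ of Proposition \ref{t4.4} (there applied to $M^{M}$), and the two composite computations you invoke, namely $\psi_{i}\circ\phi_{i}=\mathrm{id}_{D}$ and $\psi_{j}\circ\phi_{i}$ constant and factoring through $1$, are verified verbatim in item 3 of the proof of Theorem \ref{t3.2}. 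You have also kept the variance straight: the contravariance of $D\mapsto W_{D}$ turns the inclusions $\phi_{i}:D\rightarrow D(2)$ into the product projections and the coordinate projections $\psi_{i}$ into the injections, in agreement with the paper's conventions elsewhere. The one loose end is that you verify only the identities $\pi_{i}\circ\iota_{j}=\delta_{ij}$; a biproduct in the strict sense also requires either the coproduct universal property or the identity $\iota_{1}\pi_{1}+\iota_{2}\pi_{2}=\mathrm{id}$, and the additive structure needed even to state the latter is introduced only after the corollary (items 1--4 and Theorem \ref{t3.2}). This, however, is a looseness in the paper's own formulation rather than a defect of your proof; what the subsequent development actually uses is precisely the data you construct.
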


Now we are in a position to define basic operations on $M\otimes
W_{D}\rightarrow M$ in the category $\mathbf{FS}/M$ so as to make it a vector
space over the projection $\mathbb{R\times}M\rightarrow M$.

\begin{enumerate}
\item The addition is defined by
\[
\mathrm{id}_{M}\otimes W_{+_{D}}:M\otimes W_{D(2)}\rightarrow M\otimes W_{D}%
\]
where the putative mapping $+_{D}:D\rightarrow D(2)$ is
\[
+_{D}:d\in D\mapsto(d,d)\in D(2)
\]

\item The identity with respect to the above addition is defined by
\[
\mathrm{id}_{M}\otimes W_{\mathbf{0}_{D}}:M=M\otimes W_{1}\rightarrow M\otimes
W_{D}%
\]
where the putative mapping $\mathbf{0}_{D}:D\rightarrow1$ is the unique mapping.

\item The inverse with respect to the above addition is defined by
\[
\mathrm{id}_{M}\otimes W_{-_{D}}:M\otimes W_{D}\rightarrow M\otimes W_{D}%
\]
where the putative mapping $-_{D}:D\rightarrow D$ is
\[
-_{D}:d\in D\mapsto-d\in D
\]

\item The scalar multiplication by a scalar $\alpha\in\mathbf{R}$ is defined
by
\[
\mathrm{id}_{M}\otimes W_{\alpha_{D}}:M\otimes W_{D}\rightarrow M\otimes W_{D}%
\]
where the putative mapping $\alpha_{D}:D\rightarrow D$ is
\[
\alpha_{D}:d\in D\mapsto\alpha d\in D
\]

\end{enumerate}

\begin{theorem}
\label{t3.2}The canonical projection $M\otimes W_{D}\rightarrow M$ is a vector
space over the projection $\mathbf{R}\mathbb{\times}M\rightarrow M$\ in the
category $\mathbf{FS}/M$.
\end{theorem}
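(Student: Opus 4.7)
My plan is to verify each vector space axiom by pushing it through the bifunctor $M \otimes (-)$ and invoking microlinearity. Every operation that has been introduced is of the form $\mathrm{id}_M \otimes W_f$ for a putative mapping $f$ of simplicial objects, so by functoriality and the contravariant correspondence $f \mapsto W_f$, any identity of composites of our operations translates into a dual identity of composites of putative mappings, which is then verified by writing out what each side does on a tuple $(d_1,\ldots,d_k)$.

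For the axioms that involve at most two tangent vectors---commutativity of addition, the unit law $v+0=v$, the inverse law $v+(-v)=0$, the distributive laws $\alpha\cdot(v+w) = \alpha v + \alpha w$ and $(\alpha+\beta)\cdot v = \alpha v + \beta v$, and the scalar identities $(\alpha\beta)v = \alpha(\beta v)$ and $1\cdot v = v$---Proposition \ref{t3.1} already gives the identification $M\otimes W_{D(2)} = (M\otimes W_D)\times_M (M\otimes W_D)$, so I can read off each axiom as a dual putative identity. For instance, commutativity reduces to $\sigma\circ +_D = +_D$ where $\sigma:D(2)\to D(2)$ is the swap $(d_1,d_2)\mapsto(d_2,d_1)$; the distributive law $(\alpha+\beta)v = \alpha v+\beta v$ reduces to $(\alpha+\beta)_D = +_D\circ(\alpha_D\oplus\beta_D)$ as putative mappings $D\to D$; and $(\alpha\beta)_D = \alpha_D\circ\beta_D$, $1_D = \mathrm{id}_D$ are obvious. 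Each of these collapses to a tautology of multiplication by real numbers inside $D$ or $D(2)$.

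For associativity, I first need a three-fold extension of Proposition \ref{t3.1}. The obvious three-way pullback of Weil algebras---three copies of $W_D$ glued at $W_1$ through $D\mapsto(d,0,0)$, $(0,d,0)$, $(0,0,d)$, all factoring through $W_{D(3)}$ where $D(3) = \{(d_1,d_2,d_3)\in D^3 \mid d_id_j=0,\ i\neq j\}$---is carried by microlinearity to
\[
M\otimes W_{D(3)} = (M\otimes W_D)\times_M (M\otimes W_D)\times_M (M\otimes W_D).
\]
Then $(u+v)+w$ and $u+(v+w)$ both arise, using the $\oplus$-machinery of the preliminaries, as $\mathrm{id}_M\otimes W_g$ for putative mappings $g:D\to D(3)$ that each unfold to $d\mapsto(d,d,d)$, so they agree.

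The main obstacle, as far as I can see, is not any single verification---each of them is trivial at the level of putative mappings---but the bookkeeping of ensuring that the pullback identifications are set up coherently enough that the two sides of each axiom really have the same domain and codomain in $\mathbf{FS}/M$, so that equality of putative mappings automatically delivers equality of our morphisms. Once the three-fold extension of Proposition \ref{t3.1} is in place and the identifications have been fixed, the remaining work is mechanical.
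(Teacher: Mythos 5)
Your proposal is correct and follows essentially the same route as the paper: each axiom is reduced, via microlinearity and the identification of $M\otimes W_{D(n)}$ with the $n$-fold fibred product over $M$, to an identity of putative mappings between simplicial infinitesimal objects that is checked pointwise (associativity in particular comes down to both composites $D\rightarrow D(3)$ unfolding to $d\mapsto(d,d,d)$, which is exactly the content of the paper's commutative square with $\epsilon_{12}$ and $\epsilon_{23}$). The only nit is the composition order in $(\alpha+\beta)_{D}=+_{D}\circ(\alpha_{D}\oplus\beta_{D})$, which should read $(\alpha_{D}\oplus\beta_{D})\circ+_{D}$ since $+_{D}:D\rightarrow D(2)$ and $\alpha_{D}\oplus\beta_{D}:D(2)\rightarrow D$; this is immaterial to the argument.
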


\begin{proof}
\begin{enumerate}
\item The associativity of the addition follows from the following commutative
diagram:
\[%
\begin{array}
[c]{ccccc}
&  & \mathrm{id}_{M}\otimes W_{\epsilon_{23}} &  & \\
& M\otimes W_{D(3)} & \rightarrow & M\otimes W_{D(2)} & \\
\mathrm{id}_{M}\otimes W_{\epsilon_{12}} & \downarrow &  & \downarrow &
\mathrm{id}_{M}\otimes W_{+_{D}}\\
& M\otimes W_{D(2)} & \rightarrow & M\otimes W_{D} & \\
&  & \mathrm{id}_{M}\otimes W_{+_{D}} &  &
\end{array}
\]
where the putative mapping $\epsilon_{23}:D(2)\rightarrow D(3)$ is
\[
(d_{1},d_{2})\in D(2)\mapsto(d_{1},d_{1},d_{2})\in D(3)
\]
while the putative mapping $\epsilon_{12}:D(2)\rightarrow D(3)$ is
\[
(d_{1},d_{2})\in D(2)\mapsto(d_{1},d_{2},d_{2})\in D(3)
\]

\item The commutativity of the addition follows readily from the commutative
diagram
\[%
\begin{array}
[c]{cccc}
& W_{+_{D}} &  & \\
W_{D} & \leftarrow & W_{D(2)} & \\
& \nwarrow & \uparrow & W_{\tau}\\
W_{+_{D}} &  & W_{D(2)} &
\end{array}
\]
where the putative mapping $\tau:D(2)\rightarrow D(2)$ is
\[
(d_{1},d_{2})\in D(2)\mapsto(d_{2},d_{1})\in D(2)\text{.}%
\]

\item To see that the identity defined above really plays the identity with
respect to the above addition, it suffices to note that the composition of the
following two putative mappings
\[
d\in D\mapsto(d,0)\in D(2)
\]
\[
(d_{1},d_{2})\in D(2)\mapsto d_{1}\in D
\]
in order is the identity mapping of $D$, while the composition of the
following two putative mappings
\[
d\in D\mapsto(0,d)\in D(2)
\]
\[
(d_{1},d_{2})\in D(2)\mapsto d_{1}\in D
\]
in order is the constant mapping
\[
d\in D\mapsto0\in D
\]

\item To see that the addition of scalars distributes with respect to the
scalar multiplication, it suffices to note that, for any $\alpha_{1}%
,\alpha_{2}\in\mathbf{R}$, the composition of the following two putative
mappings
\[
d\in D\mapsto(d,0)\in D(2)
\]
\[
(d_{1},d_{2})\in D(2)\mapsto\alpha_{1}d_{1}+\alpha_{2}d_{2}\in D
\]
in order is the mapping
\[
d\in D\mapsto\alpha_{1}d_{1}\in D
\]
and the composition of the two putative mappings
\[
d\in D\mapsto(0,d)\in D(2)
\]
\[
(d_{1},d_{2})\in D(2)\mapsto\alpha_{1}d_{1}+\alpha_{2}d_{2}\in D
\]
in order is the mapping
\[
d\in D\mapsto\alpha_{2}d_{2}\in D
\]
while the composition of the two putative mappings
\[
d\in D\mapsto(d,d)\in D(2)
\]
\[
(d_{1},d_{2})\in D(2)\mapsto\alpha_{1}d_{1}+\alpha_{2}d_{2}\in D
\]
in order is no other than the mapping
\[
d\in D\mapsto(\alpha_{1}+\alpha_{2})d\in D
\]

\item To see that the addition of vectors distributes with respect to the
scalar multiplication, it suffices to note that, for any $\alpha\in\mathbf{R}
$, the composition of the two putative mappings
\[
d\in D\mapsto(d,0)\in D(2)
\]
\[
(d_{1},d_{2})\in D(2)\mapsto(\alpha d_{1},\alpha d_{2})\in D(2)
\]
in order is the composition of the two putative mappings
\[
d\in D\mapsto\alpha d\in D
\]
\[
d\in D\mapsto(d,0)\in D(2)
\]
in order, and the composition of the two putative mappings
\[
d\in D\mapsto(0,d)\in D(2)
\]
\[
(d_{1},d_{2})\in D(2)\mapsto(\alpha d_{1},\alpha d_{2})\in D(2)
\]
in order is the composition of the two putative mappings
\[
d\in D\mapsto\alpha d\in D
\]
\[
d\in D\mapsto(0,d)\in D(2)
\]
in order, while the composition of the two putative mappings
\[
d\in D\mapsto(d,d)\in D(2)
\]
\[
(d_{1},d_{2})\in D(2)\mapsto(\alpha d_{1},\alpha d_{2})\in D(2)
\]
in order is no other than the composition of the two putative mappings
\[
d\in D\mapsto\alpha d\in D
\]
\[
d\in D\mapsto(d,d)\in D(2)
\]
in order.

\item The verification of the other axioms for a vector space can safely be
left to the reader.
\end{enumerate}
\end{proof}

\begin{corollary}
For any $x\in M$, the inverse image $(M\otimes W_{D})_{x}$\ of $x$ under the
canonical projection $M\otimes W_{D}\rightarrow M$ is a vector space over
$\mathbf{R}$.
\end{corollary}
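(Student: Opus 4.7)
The plan is to observe that forming the fibre over a point is a product-preserving operation on the slice category $\mathbf{FS}/M$, so a vector space object in $\mathbf{FS}/M$ restricts to an ordinary vector space on each fibre. Concretely, I would let $\pi:M\otimes W_{D}\rightarrow M$ denote the canonical projection and set $E_{x}=\pi^{-1}(x)$, viewed as a Fr\"{o}licher subspace of $M\otimes W_{D}$.

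First I would unpack what Theorem \ref{t3.2} gives us as structure morphisms in $\mathbf{FS}/M$: an addition $\mathrm{id}_{M}\otimes W_{+_{D}}:(M\otimes W_{D})\times_{M}(M\otimes W_{D})\rightarrow M\otimes W_{D}$, a zero section $\mathrm{id}_{M}\otimes W_{\mathbf{0}_{D}}:M\rightarrow M\otimes W_{D}$, an inversion, and a scalar multiplication by each $\alpha\in\mathbf{R}$, all of them morphisms over $M$. I would then note the key categorical fact: pullback along the inclusion $\{x\}\hookrightarrow M$ is a functor $\mathbf{FS}/M\rightarrow\mathbf{FS}$ that preserves finite limits, so in particular $((M\otimes W_{D})\times_{M}(M\otimes W_{D}))_{x}$ is canonically identified with $E_{x}\times E_{x}$, and the terminal object $M\rightarrow M$ restricts to a point.

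Because every structure morphism is a morphism over $M$, its restriction to the fibre over $x$ gives well-defined maps $E_{x}\times E_{x}\rightarrow E_{x}$, $\{\ast\}\rightarrow E_{x}$, $E_{x}\rightarrow E_{x}$ and $E_{x}\rightarrow E_{x}$ (the last for each scalar $\alpha$). Each vector-space axiom in Theorem \ref{t3.2} is an equation between morphisms in $\mathbf{FS}/M$; restricting both sides to the fibre over $x$ preserves the equality, so the axioms hold for $E_{x}$ as an ordinary real vector space.

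The only thing that requires any thought, rather than being routine slicing, is making sure that the fibre $(M\otimes W_{D})_{x}$ really coincides, as a Fr\"{o}licher space, with the pullback of $\pi$ along $\{x\}\hookrightarrow M$, so that Proposition \ref{t3.1} legitimately yields the identification $((M\otimes W_{D})\times_{M}(M\otimes W_{D}))_{x}=E_{x}\times E_{x}$; this is where microlinearity of $M$ was used upstream, and once this pullback identification is in hand, all remaining verifications are purely formal. I would therefore expect no substantive obstacle: the corollary is essentially the statement that a vector-space object in $\mathbf{FS}/M$ has vector-space fibres, and this is immediate from the fibrewise interpretation of the axioms in Theorem \ref{t3.2}.
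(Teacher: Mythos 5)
Your argument is correct and is precisely the (unwritten) argument the paper intends: the corollary is stated without proof as an immediate consequence of Theorem \ref{t3.2}, the point being exactly that pullback along $\{x\}\hookrightarrow M$ preserves the finite limits involved, so a vector space object over $\mathbf{R}\times M\rightarrow M$ in $\mathbf{FS}/M$ restricts to an ordinary real vector space on each fibre. No discrepancy with the paper's approach.
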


The proof of the following easy proposition is left to the reader.

\begin{proposition}
For any $t_{1},...,t_{n}\in(M\otimes W_{D})_{x}$, there exists a unique
$l_{(t_{1},...,t_{n})}\in M\otimes W_{D(n)}$ with
\[
\mathrm{id}_{M}\otimes W_{i_{j}^{D(n)}}(l_{(t_{1},...,t_{n})})=t_{j}%
\qquad(j=1,...,n)
\]
where the putative mapping $i_{j}^{D(n)}:D\rightarrow D(n)$ is
\[
d\in D\mapsto(0,...,0,\underset{i}{d},0,...,0)\in D(n)
\]

\end{proposition}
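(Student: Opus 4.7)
My plan is to generalise Proposition~\ref{t3.1} from $n=2$ to arbitrary $n$ by exhibiting $W_{D(n)}$ as a suitable limit in $\mathbf{W}$ and then invoking microlinearity of $M$. Proposition~\ref{t3.1} is precisely the case $n=2$, and its proof displays $W_{D(2)}$ as the pullback of two copies of $W_D \to W_1 = \mathbf{R}$ along the augmentations. The statement we want is the $n$-fold analogue: $M\otimes W_{D(n)}$ is the $n$-fold fibred product of $M\otimes W_D$ over $M$.

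Concretely, I would first write down the $n$-fold diagram of Weil algebras consisting of $n$ copies of the augmentation $W_D \to W_1 = \mathbf{R}$, together with the morphisms $W_{i_j^{D(n)}}\colon W_{D(n)}\to W_D$ for $j=1,\dots,n$ into it, and check that this makes $W_{D(n)}$ the limit. In polynomial terms one has $W_{D(n)} \cong \mathbf{R}[X_1,\dots,X_n]/(X_iX_j:1\le i,j\le n)$, so a general element has the form $a_0 + a_1 X_1 + \cdots + a_n X_n$; the projection onto the $j$-th factor $\mathbf{R}[X]/(X^2)$ extracts $a_0 + a_j X$, and the $n$ projections determine the element uniquely subject to the common constant term $a_0$. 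Equivalently, one can argue by induction, using Proposition~\ref{t3.1} as the base and noting that $D(n)$ decomposes as $D(n-1)$ glued to $D$ at the origin.

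With the limit diagram in $\mathbf{W}$ in hand, microlinearity of $M$ converts it into a limit diagram in $\mathbf{FS}$, yielding
\[
M\otimes W_{D(n)} = \underbrace{(M\otimes W_D)\times_M\cdots\times_M(M\otimes W_D)}_{n\text{ copies}},
\]
where the $j$-th projection is $\mathrm{id}_M\otimes W_{i_j^{D(n)}}$. Given $t_1,\dots,t_n\in(M\otimes W_D)_x$, they have a common image $x$ in $M$ and therefore assemble into a unique element of this fibred product; this element is the required $l_{(t_1,\dots,t_n)}$, and the compatibility equations $\mathrm{id}_M\otimes W_{i_j^{D(n)}}(l_{(t_1,\dots,t_n)}) = t_j$ are just the defining projections of the fibred product. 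Uniqueness is automatic from the universal property.

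The only real work is the first step, verifying that the proposed cone on the $n$-fold diagram of augmentations really is a limit cone in $\mathbf{W}$; this is the categorical analogue of saying that $D(n)$ is the wedge of $n$ copies of $D$ at the origin. The remaining combinatorial verification is routine, and everything else follows formally from microlinearity, so this initial lemma about Weil algebras is where the substantive content sits.
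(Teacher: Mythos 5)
Your argument is correct and is precisely the intended one: the paper explicitly leaves the proof of this proposition to the reader, and the preceding Proposition \ref{t3.1} together with its corollary is exactly your $n=2$ case, established by the same device of exhibiting $W_{D(n)}$ as the $n$-fold wide pullback of the augmentations $W_{D}\rightarrow W_{1}$ and then applying microlinearity of $M$. Your polynomial verification that $W_{D(n)}\cong\mathbf{R}[X_{1},\dots,X_{n}]/(X_{i}X_{j})$ is that limit, with the $j$-th projection extracting $a_{0}+a_{j}X$, is the only substantive step and it checks out.
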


\section{Vector Fields}

Vector fields can be delineated in two distinct ways.

\begin{theorem}
\label{t4.1}The space of sections of the tangent bundle $M\otimes
W_{D}\rightarrow M$ can naturally be identified with the space $(M^{M}\otimes
W_{D})_{\mathrm{id}_{M}}$.
\end{theorem}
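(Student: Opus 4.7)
The plan is to apply equation~\eqref{2.3} directly. Taking $X=Y=M$ and $W_{2}=W_{D}$, Weil exponentiability of $M$ gives a natural isomorphism
\[
(M\otimes W_{D})^{M}\cong M^{M}\otimes W_{D}
\]
in $\mathbf{FS}$. By cartesian closedness of $\mathbf{FS}$, points of $(M\otimes W_{D})^{M}$ correspond bijectively (and naturally as Fr\"{o}licher spaces) to smooth maps $s:M\rightarrow M\otimes W_{D}$; a section of the tangent bundle is such an $s$ satisfying $\pi\circ s=\mathrm{id}_{M}$, where $\pi:M\otimes W_{D}\rightarrow M$ is the canonical projection.

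Next I would translate the section condition across the isomorphism. The projection $\pi$ is $\mathrm{id}_{M}\otimes W_{\mathbf{0}_{D}}$, where $\mathbf{0}_{D}:D\rightarrow1$ is the unique map; applying the functor $(-)^{M}$ to $\pi$ produces the map $s\mapsto\pi\circ s$ from $(M\otimes W_{D})^{M}$ to $(M\otimes W_{1})^{M}=M^{M}$. By naturality of \eqref{2.3} in its Weil-algebra argument, there is a commutative square
\[
\begin{array}{ccc}
(M\otimes W_{D})^{M} & \rightarrow & M^{M}\\
\downarrow &  & \downarrow\\
M^{M}\otimes W_{D} & \rightarrow & M^{M}
\end{array}
\]
whose vertical arrows are the Weil exponentiability isomorphisms and whose bottom horizontal arrow is $\mathrm{id}_{M^{M}}\otimes W_{\mathbf{0}_{D}}$, the canonical base projection of $M^{M}\otimes W_{D}\rightarrow M^{M}$. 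Thus $s$ is a section if and only if the element of $M^{M}\otimes W_{D}$ associated to it projects to $\mathrm{id}_{M}\in M^{M}$, i.e., lies in the fiber $(M^{M}\otimes W_{D})_{\mathrm{id}_{M}}$. Since the space of sections inherits its Fr\"{o}licher structure as a subspace of $(M\otimes W_{D})^{M}$, which is transported to the subspace structure of $(M^{M}\otimes W_{D})_{\mathrm{id}_{M}}$ by the natural isomorphism, the identification holds as Fr\"{o}licher spaces.

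The main obstacle I anticipate is confirming that the Weil exponentiability isomorphism \eqref{2.3} really is natural in the Weil-algebra argument in the strong sense needed for the square above. This should already be implicit in the formulation of Weil exponentiability in \cite{nishi}, \cite{nishi1} (since \eqref{2.1} is required to hold naturally for \emph{all} $W_{1},W_{2}$, and \eqref{2.3} is its degeneration at $W_{1}=\mathbf{R}$); if not stated explicitly, one would verify it by unpacking the construction of the isomorphism and checking that post-composition with $\mathrm{id}_{M}\otimes W_{\mathbf{0}_{D}}$ corresponds to extension of scalars along $W_{\mathbf{0}_{D}}$. Once this naturality is in hand, the rest of the argument reduces to cartesian closedness and the definition of the fiber.
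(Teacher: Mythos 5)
Your proposal is correct and follows essentially the same route as the paper, which simply cites the instance $M^{M}\otimes W_{D}=(M\otimes W_{D})^{M}$ of Weil exponentiability and leaves everything else implicit. You have merely spelled out the details the paper omits: the translation of the section condition $\pi\circ s=\mathrm{id}_{M}$ across that isomorphism into membership in the fiber $(M^{M}\otimes W_{D})_{\mathrm{id}_{M}}$, via naturality in the Weil-algebra argument.
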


\begin{proof}
This follows simply from the following instance of the Weil exponentiability
of $M$:
\[
M^{M}\otimes W_{D}=(M\otimes W_{D})^{M}%
\]

\end{proof}

\begin{remark}
In this paper the viewpoint of a vector field on $M$ as an element of
$(M^{M}\otimes W_{D})_{\mathrm{id}_{M}}$\ is preferred to that as a section of
the tangent bundle $M\otimes W_{D}\rightarrow M$.
\end{remark}

\begin{notation}
The totality of vector fields on $M$ is denoted by $\aleph(M)$.
\end{notation}

\begin{definition}
For any $\gamma_{1}\in M^{M}\otimes W_{D^{m}}$ and any $\gamma_{2}\in
M^{M}\otimes W_{D^{n}}$ we define $\gamma_{2}\ast\gamma_{1}\in M^{M}\otimes
W_{D^{m+n}}$ to be
\[
(\circ_{M^{M}}\otimes\mathrm{id}_{W_{D^{m+n}}})((\mathrm{id}_{M^{M}}\otimes
W_{p_{D^{m}}^{D^{m+n}}})(\gamma_{1}),(\mathrm{id}_{M^{M}}\otimes W_{p_{D^{n}%
}^{D^{m+n}}})(\gamma_{2}))
\]
where $\circ_{M^{M}}$ is the bifunctor assigning the composition $g\circ f$ to
each pair $(f,g)\in M^{M}\times M^{M}$ while $p_{D^{m}}^{D^{m+n}}$%
:$D^{m+n}\rightarrow D^{m}$ and $p_{D^{n}}^{D^{m+n}}$:$D^{m+n}\rightarrow
D^{n} $ are the canonical projections.
\end{definition}

\begin{proposition}
\label{t4.2}For any $\gamma_{1}\in M^{M}\otimes W_{D^{l}}$, any $\gamma_{2}\in
M^{M}\otimes W_{D^{m}}$ and any $\gamma_{3}\in M^{M}\otimes W_{D^{n}}$, we
have
\[
\gamma_{3}\ast(\gamma_{2}\ast\gamma_{1})=(\gamma_{3}\ast\gamma_{2})\ast
\gamma_{1}%
\]
In other words, the operation $\ast$ is associative.
\end{proposition}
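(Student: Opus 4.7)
The plan is to push both $\gamma_3\ast(\gamma_2\ast\gamma_1)$ and $(\gamma_3\ast\gamma_2)\ast\gamma_1$ into the common space $M^M\otimes W_{D^{l+m+n}}$ via the definition of $\ast$, and then reduce the desired equality to associativity of the composition bifunctor $\circ_{M^M}:M^M\times M^M\to M^M$ itself.

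To carry this out I would first apply the definition twice on each side. For the left-hand side, unfolding the outer $\ast$ requires lifting $\gamma_2\ast\gamma_1$ along $W_{p_{D^{l+m}}^{D^{l+m+n}}}$ and $\gamma_3$ along $W_{p_{D^{n}}^{D^{l+m+n}}}$; unfolding the inner $\ast$ after that, and invoking the functoriality of $\mathrm{id}_{M^M}\otimes W_{(-)}$ (so that double lifts compose correctly), presents the left-hand side as one application of $\circ_{M^M}\otimes\mathrm{id}_{W_{D^{l+m+n}}}$ to a pair whose first component is itself an application of $\circ_{M^M}\otimes\mathrm{id}_{W_{D^{l+m+n}}}$. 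A parallel unfolding treats the right-hand side. The routine bookkeeping is to verify that on each side $\gamma_1$, $\gamma_2$, $\gamma_3$ end up lifted along the same three putative mappings $D^{l+m+n}\to D^l$, $D^{l+m+n}\to D^m$, $D^{l+m+n}\to D^n$, namely projection onto the first $l$, the middle $m$, and the last $n$ coordinates respectively; this is a straightforward consequence of how canonical projections of products compose, together with the duality between Weil algebras and simplicial objects.

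Once both sides are presented as $\circ_{M^M}$-compositions of the same three lifts $\widetilde{\gamma}_1,\widetilde{\gamma}_2,\widetilde{\gamma}_3\in M^M\otimes W_{D^{l+m+n}}$ differing only in bracketing, the equality follows because $\circ_{M^M}$ is associative — ordinary composition of self-maps of $M$ is — and tensoring with $\mathrm{id}_{W_{D^{l+m+n}}}$ transports this associativity verbatim. I expect the principal (though minor) obstacle to be precisely the bookkeeping in the previous step: one must check, for example, that the composite putative mapping $D^{l+m+n}\to D^{l+m}\to D^m$ implicit in the left-hand expansion agrees with the composite $D^{l+m+n}\to D^{m+n}\to D^m$ implicit in the right-hand expansion, both landing on the middle $m$ coordinates. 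This is pure functoriality at the level of simplicial objects, so no deeper property of $M$ beyond what Theorem~\ref{t4.1} already uses is needed.
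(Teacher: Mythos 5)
Your proposal is correct and follows essentially the same route as the paper: unfold the definition of $\ast$ on both sides, use the bifunctoriality of $\otimes$ (which the paper records as the two commutative squares (\ref{4.2.1}) and (\ref{4.2.2})) to push all three factors into $M^{M}\otimes W_{D^{l+m+n}}$ along the composite projections, check that these composites agree with the direct projections onto the first $l$, middle $m$, and last $n$ coordinates, and then transport the associativity of $\circ_{M^{M}}$ through $\otimes\,\mathrm{id}_{W_{D^{l+m+n}}}$. The paper merely makes the naturality squares explicit where you invoke them in prose; no ingredient is missing.
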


\begin{proof}
By dint of the bifunctionality of $\otimes$,\ it is easy to see that the
diagram
\begin{equation}%
\begin{array}
[c]{ccc}
& \circ_{M^{M}}\otimes\mathrm{id}_{W_{D^{l+m}}} & \\
(M^{M}\otimes W_{D^{l+m}})\times(M^{M}\otimes W_{D^{l+m}}) & \rightarrow &
M^{M}\otimes W_{D^{l+m}}\\
\downarrow &  & \downarrow\\
(M^{M}\otimes W_{D^{l+m+n}})\times(M^{M}\otimes W_{D^{l+m+n}}) & \rightarrow &
M^{M}\otimes W_{D^{l+m+n}}\\
& \circ_{M^{M}}\otimes\mathrm{id}_{W_{D^{l+m+n}}} &
\end{array}
\label{4.2.1}%
\end{equation}
is commutative, where the left vertical arrow stands for $(\mathrm{id}_{M^{M}%
}\otimes W_{p_{D^{l+m}}^{D^{l+m+n}}})\times(\mathrm{id}_{M^{M}}\otimes
W_{p_{D^{l+m}}^{D^{l+m+n}}})$, and the right vertical arrow stands for
$\mathrm{id}_{M^{M}}\otimes W_{p_{D^{l+m}}^{D^{l+m+n}}}$. By dint of the
bifunctionality of $\otimes$ again,\ it is also easy to see that the diagram
\begin{equation}%
\begin{array}
[c]{ccc}
& \circ_{M^{M}}\otimes\mathrm{id}_{W_{D^{m+n}}} & \\
(M^{M}\otimes W_{D^{m+n}})\times(M^{M}\otimes W_{D^{m+n}}) & \rightarrow &
M^{M}\otimes W_{D^{m+n}}\\
\downarrow &  & \downarrow\\
(M^{M}\otimes W_{D^{l+m+n}})\times(M^{M}\otimes W_{D^{l+m+n}}) & \rightarrow &
M^{M}\otimes W_{D^{l+m+n}}\\
& \circ_{M^{M}}\otimes\mathrm{id}_{W_{D^{l+m+n}}} &
\end{array}
\label{4.2.2}%
\end{equation}
is commutative, where the left vertical arrow stands for $(\mathrm{id}_{M^{M}%
}\otimes W_{p_{D^{m+n}}^{D^{l+m+n}}})\times(\mathrm{id}_{M^{M}}\otimes
W_{p_{D^{m+n}}^{D^{l+m+n}}})$, and the right vertical arrow stands for
$\mathrm{id}_{M^{M}}\otimes W_{p_{D^{m+n}}^{D^{l+m+n}}}$. Therefore we have
\begin{align*}
&  \gamma_{3}\ast(\gamma_{2}\ast\gamma_{1})\\
&  =(\circ_{M^{M}}\otimes\mathrm{id}_{W_{D^{l+m+n}}})((\mathrm{id}_{M^{M}%
}\otimes W_{p_{Dl+m}^{D^{l+m+n}}})((\circ_{M^{M}}\otimes\mathrm{id}%
_{W_{D^{l+m}}})\\
&  ((\mathrm{id}_{M^{M}}\otimes W_{p_{D^{l}}^{D^{l+m}}})(\gamma_{1}%
),(\mathrm{id}_{M^{M}}\otimes W_{p_{D^{m}}^{D^{l+m}}})(\gamma_{2}%
))),(\mathrm{id}_{M^{M}}\otimes W_{p_{D^{n}}^{D^{l+m+n}}})(\gamma_{3}))\\
&  =(\circ_{M^{M}}\otimes\mathrm{id}_{W_{D^{l+m+n}}})((\circ_{M^{M}}%
\otimes\mathrm{id}_{W_{D^{l+m+n}}})((\mathrm{id}_{M^{M}}\otimes W_{p_{Dl+m}%
^{D^{l+m+n}}})\circ(\mathrm{id}_{M^{M}}\otimes W_{p_{D^{l}}^{D^{l+m}}}%
)(\gamma_{1}),\\
&  (\mathrm{id}_{M^{M}}\otimes W_{p_{Dl+m}^{D^{l+m+n}}})\circ(\mathrm{id}%
_{M^{M}}\otimes W_{p_{D^{m}}^{D^{l+m}}})(\gamma_{2})),(\mathrm{id}_{M^{M}%
}\otimes W_{p_{D^{n}}^{D^{l+m+n}}})(\gamma_{3}))\\
&  =(\circ_{M^{M}}\otimes\mathrm{id}_{W_{D^{l+m+n}}})((\circ_{M^{M}}%
\otimes\mathrm{id}_{W_{D^{l+m+n}}})((\mathrm{id}_{M^{M}}\otimes W_{p_{D^{l}%
}^{D^{l+m+n}}})(\gamma_{1}),\\
&  (\mathrm{id}_{M^{M}}\otimes W_{p_{D^{m}}^{D^{l+m+n}}})(\gamma
_{2})),(\mathrm{id}_{M^{M}}\otimes W_{p_{D^{n}}^{D^{l+m+n}}})(\gamma_{3}))\\
&  \text{[by dint of the commutativity of the diagram (\ref{4.2.1})]}\\
&  =(\circ_{M^{M}}\otimes\mathrm{id}_{W_{D^{l+m+n}}})((\mathrm{id}_{M^{M}%
}\otimes W_{p_{D^{l}}^{D^{l+m+n}}})(\gamma_{1}),(\circ_{M^{M}}\otimes
\mathrm{id}_{W_{D^{l+m+n}}})((\mathrm{id}_{M^{M}}\otimes W_{p_{D^{m}%
}^{D^{l+m+n}}})(\gamma_{2}),\\
&  (\mathrm{id}_{M^{M}}\otimes W_{p_{D^{n}}^{D^{l+m+n}}})(\gamma_{3})))\\
&  \text{[since the operation }\circ_{M^{M}}\text{ is associative]}\\
&  =(\circ_{M^{M}}\otimes\mathrm{id}_{W_{D^{l+m+n}}})((\mathrm{id}_{M^{M}%
}\otimes W_{p_{D^{l}}^{D^{l+m+n}}})(\gamma_{1}),(\circ_{M^{M}}\otimes
\mathrm{id}_{W_{D^{l+m+n}}})((\mathrm{id}_{M^{M}}\otimes W_{p_{D^{m+n}%
}^{D^{l+m+n}}})\circ\\
&  (\mathrm{id}_{M^{M}}\otimes W_{p_{D^{m}}^{D^{m+n}}})(\gamma_{2}%
),(\mathrm{id}_{M^{M}}\otimes W_{p_{D^{m+n}}^{D^{l+m+n}}})\circ(\mathrm{id}%
_{M^{M}}\otimes W_{p_{D^{n}}^{D^{m+n}}})(\gamma_{3})))\\
&  \text{[by dint of the commutativity of the diagram (\ref{4.2.2})]}\\
&  =(\circ_{M^{M}}\otimes\mathrm{id}_{W_{D^{l+m+n}}})((\mathrm{id}_{M^{M}%
}\otimes W_{p_{D^{l}}^{D^{l+m+n}}})(\gamma_{1}),(\mathrm{id}_{M^{M}}\otimes
W_{p_{D^{m+n}}^{D^{l+m+n}}})((\circ_{M^{M}}\otimes\mathrm{id}_{W_{D^{m+n}}})\\
&  ((\mathrm{id}_{M^{M}}\otimes W_{p_{D^{m}}^{D^{m+n}}})(\gamma_{2}%
),(\mathrm{id}_{M^{M}}\otimes W_{p_{D^{n}}^{D^{m+n}}})(\gamma_{3}))))\\
&  =(\gamma_{3}\ast\gamma_{2})\ast\gamma_{1}%
\end{align*}

\end{proof}

It is easy to see that

\begin{proposition}
\label{t4.3}For any $\gamma\in M^{M}\otimes W_{D^{m}}$ we have
\begin{align*}
\gamma\ast I_{l}  &  =(\mathrm{id}_{M^{M}}\otimes W_{p_{D^{m}}^{D^{l+m}}%
})(\gamma)\\
I_{n}\ast\gamma &  =(\mathrm{id}_{M^{M}}\otimes W_{p_{D^{m}}^{D^{m+n}}%
})(\gamma)
\end{align*}
where $I_{l}:M^{M}\otimes W_{D^{l}}\rightarrow M^{M}\otimes W_{D^{l}}$ and
$I_{n}:M^{M}\otimes W_{D^{n}}\rightarrow M^{M}\otimes W_{D^{n}}$ are the
identity mappings.
\end{proposition}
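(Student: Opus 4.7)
The plan is to unfold the definition of $\ast$ in each formula and reduce it to the left (respectively right) unit law for composition in $M^M$, transported through the product-preserving functor $\cdot\otimes W_{D^{l+m}}$ (respectively $\cdot\otimes W_{D^{m+n}}$). Throughout I read $I_l$ as the element of $M^M\otimes W_{D^l}$ obtained by applying $\mathrm{id}_{M^M}\otimes W_{\eta_l}:M^M=M^M\otimes W_1\to M^M\otimes W_{D^l}$ to $\mathrm{id}_M\in M^M$, where $\eta_l:D^l\to 1$ is the unique morphism to the terminal infinitesimal object; similarly for $I_n$. This is the only reading under which the expressions $\gamma\ast I_l$ and $I_n\ast\gamma$ parse under the preceding definition.

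First I would observe that, since $\eta_l\circ p_{D^l}^{D^{l+m}}=\eta_{l+m}$, the bifunctoriality of $\otimes$ gives
\[
(\mathrm{id}_{M^M}\otimes W_{p_{D^l}^{D^{l+m}}})(I_l)=I_{l+m},
\]
and analogously $(\mathrm{id}_{M^M}\otimes W_{p_{D^n}^{D^{m+n}}})(I_n)=I_{m+n}$. Next, let $e:1\to M^M$ pick out $\mathrm{id}_M$ and let $!:M^M\to 1$ denote the unique morphism. The left unit law $g\circ\mathrm{id}_M=g$ in $M^M$ is encoded by the identity
\[
\circ_{M^M}\circ\langle e\circ!,\,\mathrm{id}_{M^M}\rangle=\mathrm{id}_{M^M}
\]
of morphisms in $\mathbf{FS}$. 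Applying the product-preserving functor $\cdot\otimes W_{D^{l+m}}$, and using that this functor preserves terminal objects (so that $e\circ!$ is sent to the constant morphism with value $I_{l+m}$), one obtains
\[
(\circ_{M^M}\otimes\mathrm{id}_{W_{D^{l+m}}})\bigl(I_{l+m},\,\tilde\gamma\bigr)=\tilde\gamma
\]
for every $\tilde\gamma\in M^M\otimes W_{D^{l+m}}$. Taking $\tilde\gamma=(\mathrm{id}_{M^M}\otimes W_{p_{D^m}^{D^{l+m}}})(\gamma)$ and combining with the previous display yields the first formula.

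The second formula follows by the mirror-image argument, invoking the right unit law $\mathrm{id}_M\circ f=f$, i.e.\ $\circ_{M^M}\circ\langle\mathrm{id}_{M^M},\,e\circ!\rangle=\mathrm{id}_{M^M}$, and placing $I_{m+n}$ in the second slot of $\circ_{M^M}\otimes\mathrm{id}_{W_{D^{m+n}}}$. The only real obstacle is notational: the phrase ``identity mapping'' in the statement must be parsed as the canonical constant identity element of $M^M\otimes W_{D^l}$ (respectively $M^M\otimes W_{D^n}$) for $\ast$ to be defined at all; once this reading is fixed, both identities are immediate consequences of bifunctoriality together with the unit law for composition in $M^M$.
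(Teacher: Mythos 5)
Your proof is correct: the paper offers no argument for this proposition (it is prefaced only by ``It is easy to see that''), and what you give is precisely the intended routine verification --- reading $I_{l}$ as the constant element $(\mathrm{id}_{M^{M}}\otimes W_{\mathbf{0}_{D^{l}}})(\mathrm{id}_{M})$ of $M^{M}\otimes W_{D^{l}}$ (the reading forced by the paper's later use of $I_{1}$ and $I_{2}$ as arguments of $\circ_{M^{M}}\otimes\mathrm{id}$), unfolding the definition of $\ast$, and transporting the unit laws for $\circ_{M^{M}}$ through the product-preserving functor $\cdot\otimes W_{D^{l+m}}$. The one substantive step, $(\mathrm{id}_{M^{M}}\otimes W_{p_{D^{l}}^{D^{l+m}}})(I_{l})=I_{l+m}$, is correctly justified by $\mathbf{0}_{D^{l}}\circ p_{D^{l}}^{D^{l+m}}=\mathbf{0}_{D^{l+m}}$ together with bifunctoriality, so nothing is missing.
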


The following proposition is essentially a variant of Proposition 3 in
\S \S 3.2 of \cite{lav}.

\begin{proposition}
\label{t4.4}For any $X\in\aleph(M)$, we have
\begin{align*}
&  (\mathrm{id}_{M^{M}}\otimes W_{(d_{1},d_{2})\in D(2)\mapsto d_{1}+d_{2}\in
D})(X)\\
&  =(\circ_{M^{M}}\otimes\mathrm{id}_{W_{D(2)}})((\mathrm{id}_{M^{M}}\otimes
W_{p_{1}^{D(2)}})(X),(\mathrm{id}_{M^{M}}\otimes W_{p_{2}^{D(2)}})(X))
\end{align*}
where $p_{1}^{D(2)}:D(2)\rightarrow D$ and $p_{2}^{D(2)}:D(2)\rightarrow D$
are the canonical projections onto the first and second factors.
\end{proposition}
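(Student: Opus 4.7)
The plan is to compare the two elements of $M^M\otimes W_{D(2)}$ by means of the pullback decomposition supplied by Proposition \ref{t3.1}. Since $M$ lies in the cartesian closed category of Weil exponentiable microlinear Fr\"{o}licher spaces, so does the exponential $M^M$; hence Proposition \ref{t3.1} applied to $M^M$ yields
\[
M^M\otimes W_{D(2)}=(M^M\otimes W_D)\times_{M^M}(M^M\otimes W_D),
\]
with projections induced by the putative mappings $i_1:d\in D\mapsto(d,0)\in D(2)$ and $i_2:d\in D\mapsto(0,d)\in D(2)$. It therefore suffices to check that the left- and right-hand sides of the asserted identity have the same images under $\mathrm{id}_{M^M}\otimes W_{i_1}$ and $\mathrm{id}_{M^M}\otimes W_{i_2}$.

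For the left-hand side, functoriality of $\otimes$ collapses the two successive applications into $\mathrm{id}_{M^M}\otimes W_{\sigma\circ i_j}$, where $\sigma$ denotes the putative mapping $(d_1,d_2)\mapsto d_1+d_2$. Since $\sigma\circ i_1=\sigma\circ i_2=\mathrm{id}_D$, both projections of the left-hand side are simply $X$.

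For the right-hand side, bifunctoriality of $\otimes$ with respect to $\circ_{M^M}$ allows us to commute $\mathrm{id}_{M^M}\otimes W_{i_j}$ past $\circ_{M^M}\otimes\mathrm{id}_{W_{D(2)}}$, after which functoriality converts each inner Weil map $W_{p_k^{D(2)}}$ into $W_{p_k^{D(2)}\circ i_j}$. For $j=1$, $p_1^{D(2)}\circ i_1=\mathrm{id}_D$ so the first argument becomes $X$, whereas $p_2^{D(2)}\circ i_1$ is the constant-zero map $D\to D$, which factors through the terminal object; hence applying $\mathrm{id}_{M^M}\otimes W_{p_2^{D(2)}\circ i_1}$ to $X$ reduces, by virtue of the hypothesis $X\in(M^M\otimes W_D)_{\mathrm{id}_M}$, to the constant-$\mathrm{id}_M$ element of $M^M\otimes W_D$. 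Composing $X$ against this constant via $\circ_{M^M}\otimes\mathrm{id}_{W_D}$ returns $X$, because $\mathrm{id}_M$ is a two-sided unit for composition in $M^M$. The case $j=2$ is symmetric.

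The main technical hurdle is the careful bookkeeping of bifunctoriality of $\otimes$ with respect to the composition bifunctor $\circ_{M^M}$, in particular commuting $\mathrm{id}_{M^M}\otimes W_{i_j}$ past $\circ_{M^M}\otimes\mathrm{id}_{W_{D(2)}}$ and then absorbing the constant-$\mathrm{id}_M$ factor via the unital laws $g\circ\mathrm{id}_M=g$ and $\mathrm{id}_M\circ f=f$. Once these manipulations are in place, the equality of the two projections closes the argument by the pullback decomposition of Proposition \ref{t3.1}.
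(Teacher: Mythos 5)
Your proof is correct and follows essentially the same route as the paper's: reduce, via the microlinearity of $M^{M}$ and the pullback (\ref{3.1.1}), to comparing the images of both sides under $\mathrm{id}_{M^{M}}\otimes W_{i_{1}^{D(2)}}$ and $\mathrm{id}_{M^{M}}\otimes W_{i_{2}^{D(2)}}$, then use functoriality of $\otimes$ on the left-hand side and bifunctoriality with respect to $\circ_{M^{M}}$ (the paper's diagram (\ref{4.4.3})) on the right-hand side to see that each projection equals $X$. Your explicit justification that the factor coming from $p_{2}^{D(2)}\circ i_{1}^{D(2)}=0$ collapses to the constant-$\mathrm{id}_{M}$ element (using $X\in(M^{M}\otimes W_{D})_{\mathrm{id}_{M}}$) and is then absorbed by the unitality of composition spells out a step the paper leaves implicit.
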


\begin{proof}
With due regard to the limit diagram (\ref{3.1.1}) of Weil algebras and the
microlinearity of $M^{M}$, it suffices to see that
\begin{align}
&  (\mathrm{id}_{M^{M}}\otimes W_{i_{1}^{D(2)}})((\mathrm{id}_{M^{M}}\otimes
W_{(d_{1},d_{2})\in D(2)\mapsto d_{1}+d_{2}\in D})(X))\nonumber\\
&  =(\mathrm{id}_{M^{M}}\otimes W_{i_{1}^{D(2)}})((\circ_{M^{M}}%
\otimes\mathrm{id}_{W_{D(2)}})((\mathrm{id}_{M^{M}}\otimes W_{p_{1}^{D(2)}%
})(X),(\mathrm{id}_{M^{M}}\otimes W_{p_{2}^{D(2)}})(X)))\label{4.4.1}%
\end{align}
and that
\begin{align}
&  (\mathrm{id}_{M^{M}}\otimes W_{i_{2}^{D(2)}})((\mathrm{id}_{M^{M}}\otimes
W_{(d_{1},d_{2})\in D(2)\mapsto d_{1}+d_{2}\in D})(X))\nonumber\\
&  =(\mathrm{id}_{M^{M}}\otimes W_{i_{2}^{D(2)}})((\circ_{M^{M}}%
\otimes\mathrm{id}_{W_{D(2)}})((\mathrm{id}_{M^{M}}\otimes W_{p_{1}^{D(2)}%
})(X),(\mathrm{id}_{M^{M}}\otimes W_{p_{2}^{D(2)}})(X)))\label{4.4.2}%
\end{align}
Here we deal only with (\ref{4.4.1}), leaving the similar treatment of
(\ref{4.4.2}) safely to the reader. By dint of the bifunctionality of
$\otimes$,\ it is easy to see that the diagram
\begin{equation}%
\begin{array}
[c]{ccc}
& \circ_{M^{M}}\otimes\mathrm{id}_{W_{D(2)}} & \\
(M^{M}\otimes W_{D(2)})\times(M^{M}\otimes W_{D(2)}) & \rightarrow &
M^{M}\otimes W_{D(2)}\\
\downarrow &  & \downarrow\\
(M^{M}\otimes W_{D})\times(M^{M}\otimes W_{D}) & \rightarrow & M^{M}\otimes
W_{D}\\
& \circ_{M^{M}}\otimes\mathrm{id}_{W_{D}} &
\end{array}
\label{4.4.3}%
\end{equation}
commutes, where the left vertical arrow stands for $(\mathrm{id}_{M^{M}%
}\otimes W_{i_{1}^{D(2)}})\times(\mathrm{id}_{M^{M}}\otimes W_{i_{1}^{D(2)}}%
)$, and the right vertical arrow stands for $\mathrm{id}_{M^{M}}\otimes
W_{i_{1}^{D(2)}}$ with $i_{1}^{D(2)}:D\rightarrow D(2)$ being the canonical
injection $d\in D\mapsto(d,0)\in D(2)$. Therefore we have
\begin{align*}
&  (\mathrm{id}_{M^{M}}\otimes W_{i_{1}^{D(2)}})((\circ_{M^{M}}\otimes
\mathrm{id}_{W_{D(2)}})((\mathrm{id}_{M^{M}}\otimes W_{p_{1}^{D(2)}%
})(X),(\mathrm{id}_{M^{M}}\otimes W_{p_{2}^{D(2)}})(X)))\\
&  =(\circ_{M^{M}}\otimes\mathrm{id}_{W_{D}})((\mathrm{id}_{M^{M}}\otimes
W_{i_{1}^{D(2)}})\circ(\mathrm{id}_{M^{M}}\otimes W_{p_{1}^{D(2)}})(X),\\
&  (\mathrm{id}_{M^{M}}\otimes W_{i_{1}^{D(2)}})\circ(\mathrm{id}_{M^{M}%
}\otimes W_{p_{2}^{D(2)}})(X)))\\
&  =(\circ_{M^{M}}\otimes\mathrm{id}_{W_{D}})((\mathrm{id}_{M^{M}}%
\otimes(W_{i_{1}^{D(2)}}\circ W_{p_{1}^{D(2)}}))(X),(\mathrm{id}_{M^{M}%
}\otimes(W_{i_{1}^{D(2)}}\circ W_{p_{2}^{D(2)}})(X))\\
&  =X
\end{align*}
while it is trivial to see that
\begin{align*}
&  (\mathrm{id}_{M^{M}}\otimes W_{i_{1}^{D(2)}})((\mathrm{id}_{M^{M}}\otimes
W_{(d_{1},d_{2})\in D(2)\mapsto d_{1}+d_{2}\in D})(X))\\
&  =(\mathrm{id}_{M^{M}}\otimes(W_{i_{1}^{D(2)}}\circ W_{(d_{1},d_{2})\in
D(2)\mapsto d_{1}+d_{2}\in D}))(X)\\
&  =X
\end{align*}
Thus we are done.
\end{proof}

The following corollary is essentially a variant of Proposition 4 in \S \S 3.2
of \cite{lav}.

\begin{corollary}%
\begin{align*}
&  (\mathrm{id}_{M^{M}}\otimes W_{d\in D\mapsto(d,-d)\in D(2)})((\circ_{M^{M}%
}\otimes\mathrm{id}_{W_{D(2)}})((\mathrm{id}_{M^{M}}\otimes W_{p_{1}^{D(2)}%
})(X),\\
&  (\mathrm{id}_{M^{M}}\otimes W_{p_{2}^{D(2)}})(X)))\\
&  =(\mathrm{id}_{M^{M}}\otimes W_{d\in D\mapsto(-d,d)\in D(2)})((\circ
_{M^{M}}\otimes\mathrm{id}_{W_{D(2)}})((\mathrm{id}_{M^{M}}\otimes
W_{p_{1}^{D(2)}})(X),\\
&  (\mathrm{id}_{M^{M}}\otimes W_{p_{2}^{D(2)}})(X)))\\
&  =\mathbf{0}_{\mathrm{id}_{M}}%
\end{align*}

\end{corollary}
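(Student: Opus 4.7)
The plan is to reduce both sides of the corollary to a single application of a Weil functor, via Proposition \ref{t4.4} together with the bifunctoriality of $\otimes$. Abbreviating
\[
\Sigma(X):=(\circ_{M^{M}}\otimes\mathrm{id}_{W_{D(2)}})((\mathrm{id}_{M^{M}}\otimes W_{p_{1}^{D(2)}})(X),(\mathrm{id}_{M^{M}}\otimes W_{p_{2}^{D(2)}})(X)),
\]
Proposition \ref{t4.4} asserts $\Sigma(X)=(\mathrm{id}_{M^{M}}\otimes W_{\sigma})(X)$, where $\sigma:(d_{1},d_{2})\in D(2)\mapsto d_{1}+d_{2}\in D$. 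Writing $\iota_{1}:d\in D\mapsto(d,-d)\in D(2)$ and $\iota_{2}:d\in D\mapsto(-d,d)\in D(2)$, each outer expression in the corollary takes the form $(\mathrm{id}_{M^{M}}\otimes W_{\iota_{j}})(\Sigma(X))$, so it suffices to evaluate this for $j=1,2$ and check that both yield $\mathbf{0}_{\mathrm{id}_{M}}$.

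Next I would collapse the two stacked applications into one using the bifunctoriality of $\otimes$ and the contravariant functoriality of $W_{\bullet}$:
\[
(\mathrm{id}_{M^{M}}\otimes W_{\iota_{j}})\circ(\mathrm{id}_{M^{M}}\otimes W_{\sigma})=\mathrm{id}_{M^{M}}\otimes W_{\sigma\circ\iota_{j}}.
\]
A direct check gives $\sigma\circ\iota_{1}:d\mapsto d+(-d)=0$ and $\sigma\circ\iota_{2}:d\mapsto(-d)+d=0$, so in both cases the putative mapping $\sigma\circ\iota_{j}:D\rightarrow D$ is the constant zero map, which factors through the terminal object $1$ as $\mathbf{0}_{D}:D\rightarrow1$ followed by the unique map $1\rightarrow D$ picking out the origin.

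Applying $W_{\bullet}$ to this factorization and tensoring with $\mathrm{id}_{M^{M}}$, I find that $(\mathrm{id}_{M^{M}}\otimes W_{\sigma\circ\iota_{j}})(X)$ equals the image of $X$ under the composite $M^{M}\otimes W_{D}\rightarrow M^{M}\otimes W_{1}=M^{M}\rightarrow M^{M}\otimes W_{D}$, whose second arrow is precisely the zero-section $\mathrm{id}_{M^{M}}\otimes W_{\mathbf{0}_{D}}$ used in Theorem \ref{t3.2} (applied to $M^{M}$ in place of $M$) to define the additive identity. Since $X\in(M^{M}\otimes W_{D})_{\mathrm{id}_{M}}$ projects to $\mathrm{id}_{M}\in M^{M}$, the composite returns $\mathbf{0}_{\mathrm{id}_{M}}$, as required. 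The only real pitfall is book-keeping: one must keep straight which arrows are $D\rightarrow D(2)$ and which are $D(2)\rightarrow D$ so that the contravariant direction of $W_{\bullet}$ is correctly tracked, and one must recognise $\mathrm{id}_{M^{M}}\otimes W_{\mathbf{0}_{D}}$ from Theorem \ref{t3.2} as literally the construction producing $\mathbf{0}_{\mathrm{id}_{M}}$.
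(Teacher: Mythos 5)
Your proposal is correct and follows essentially the same route as the paper: invoke Proposition \ref{t4.4} to replace the composite by $(\mathrm{id}_{M^{M}}\otimes W_{(d_{1},d_{2})\in D(2)\mapsto d_{1}+d_{2}\in D})(X)$, collapse the two tensored Weil morphisms by functoriality into $\mathrm{id}_{M^{M}}\otimes W_{d\in D\mapsto 0\in D}$, and identify the result with $\mathbf{0}_{\mathrm{id}_{M}}$. Your extra remarks on the contravariant bookkeeping and on factoring the constant map through $W_{1}$ merely spell out the paper's final one-line step.
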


\begin{proof}
We have
\begin{align*}
&  (\mathrm{id}_{M^{M}}\otimes W_{d\in D\mapsto(d,-d)\in D(2)})((\circ_{M^{M}%
}\otimes\mathrm{id}_{W_{D(2)}})((\mathrm{id}_{M^{M}}\otimes W_{p_{1}^{D(2)}%
})(X),\\
&  (\mathrm{id}_{M^{M}}\otimes W_{p_{2}^{D(2)}})(X)))\\
&  =(\mathrm{id}_{M^{M}}\otimes W_{d\in D\mapsto(d,-d)\in D(2)})((\mathrm{id}%
_{M^{M}}\otimes W_{(d_{1},d_{2})\in D(2)\mapsto d_{1}+d_{2}\in D})(X))\\
&  \text{\lbrack by Proposition \ref{t4.4}]}\\
&  =(\mathrm{id}_{M^{M}}\otimes W_{d\in D\mapsto0\in D})(X)\\
&  =\mathbf{0}_{\mathrm{id}_{M}}%
\end{align*}
Similarly we have
\begin{align*}
&  (\mathrm{id}_{M^{M}}\otimes W_{d\in D\mapsto(-d,d)\in D(2)})((\circ_{M^{M}%
}\otimes\mathrm{id}_{W_{D(2)}})((\mathrm{id}_{M^{M}}\otimes W_{p_{1}^{D(2)}%
})(X),\\
&  (\mathrm{id}_{M^{M}}\otimes W_{p_{2}^{D(2)}})(X)))\\
&  =\mathbf{0}_{\mathrm{id}_{M}}%
\end{align*}

\end{proof}

The following proposition is essentially a variant of Proposition 6 in
\S \S 3.2 of \cite{lav}.

\begin{proposition}
\label{t4.5}For any $X,Y\in\aleph(M)$ we have
\begin{align*}
&  l_{(X,Y)}\\
&  =(\circ_{M^{M}}\otimes\mathrm{id}_{W_{D(2)}})((\mathrm{id}_{M^{M}}\otimes
W_{p_{1}^{D(2)}})(X),(\mathrm{id}_{M^{M}}\otimes W_{p_{2}^{D(2)}})(Y))\\
&  =(\circ_{M^{M}}\otimes\mathrm{id}_{W_{D(2)}})((\mathrm{id}_{M^{M}}\otimes
W_{p_{1}^{D(2)}})(Y),(\mathrm{id}_{M^{M}}\otimes W_{p_{2}^{D(2)}})(X))
\end{align*}
so that
\begin{align*}
&  X+Y\\
&  =(\mathrm{id}_{M^{M}}\otimes W_{+_{D}})((\circ_{M^{M}}\otimes
\mathrm{id}_{W_{D(2)}})((\mathrm{id}_{M^{M}}\otimes W_{p_{1}^{D(2)}%
})(X),(\mathrm{id}_{M^{M}}\otimes W_{p_{2}^{D(2)}})(Y)))\\
&  =(\mathrm{id}_{M^{M}}\otimes W_{+_{D}})((\circ_{M^{M}}\otimes
\mathrm{id}_{W_{D(2)}})((\mathrm{id}_{M^{M}}\otimes W_{p_{1}^{D(2)}%
})(Y),(\mathrm{id}_{M^{M}}\otimes W_{p_{2}^{D(2)}})(X)))
\end{align*}

\end{proposition}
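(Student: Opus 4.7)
The strategy is to invoke the uniqueness clause of the proposition just before Section 4: $l_{(X,Y)}$ is the \emph{unique} element of $(M^M\otimes W_{D(2)})_{\mathrm{id}_M}$ whose pullback along $\mathrm{id}_{M^M}\otimes W_{i_1^{D(2)}}$ is $X$ and whose pullback along $\mathrm{id}_{M^M}\otimes W_{i_2^{D(2)}}$ is $Y$. So the whole first assertion reduces to checking these two lifting identities for each of the two displayed expressions, and the proof runs in exact parallel to that of Proposition \ref{t4.4}.

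First I would set up the bifunctoriality square analogous to (\ref{4.4.3}): for any morphism $\varphi:D\to D(2)$ of simplicial objects, the square
\[
\begin{array}{ccc}
(M^M\otimes W_{D(2)})\times(M^M\otimes W_{D(2)}) & \xrightarrow{\ \circ_{M^M}\otimes\mathrm{id}_{W_{D(2)}}\ } & M^M\otimes W_{D(2)}\\
\downarrow & & \downarrow\\
(M^M\otimes W_D)\times(M^M\otimes W_D) & \xrightarrow{\ \circ_{M^M}\otimes\mathrm{id}_{W_D}\ } & M^M\otimes W_D
\end{array}
\]
commutes, with vertical arrows $(\mathrm{id}_{M^M}\otimes W_\varphi)\times(\mathrm{id}_{M^M}\otimes W_\varphi)$ and $\mathrm{id}_{M^M}\otimes W_\varphi$. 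Specializing to $\varphi=i_k^{D(2)}$ lets me pull $\mathrm{id}_{M^M}\otimes W_{i_k^{D(2)}}$ inside $\circ_{M^M}\otimes\mathrm{id}$, so that the computation is reduced to simplifying the composites $p_j^{D(2)}\circ i_k^{D(2)}:D\to D$ via functoriality of $W$. These composites are the identity when $j=k$ and the zero map when $j\neq k$. Pulling a vector field back along the zero map yields $\mathbf{0}_{\mathrm{id}_M}$, the degenerate element at $\mathrm{id}_M$, which is an identity for $\circ_{M^M}$ in the sense that $\circ_{M^M}(Z,\mathbf{0}_{\mathrm{id}_M})=Z$ and $\circ_{M^M}(\mathbf{0}_{\mathrm{id}_M},Z)=Z$ because composition with $\mathrm{id}_M$ on either side is trivial. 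Reading off the pullbacks of each of the two displayed expressions and matching them against the defining restrictions of $l_{(X,Y)}$ completes the verification of the first claim.

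Once both expressions have been identified with $l_{(X,Y)}$, the formula for $X+Y$ is immediate: the addition of tangent vectors was defined in Section \ref{s3} by $\mathrm{id}_{M^M}\otimes W_{+_D}$ acting on the element of $M^M\otimes W_{D(2)}$ that lifts the pair $(X,Y)$, i.e.\ on $l_{(X,Y)}$, so applying $\mathrm{id}_{M^M}\otimes W_{+_D}$ to both representations of $l_{(X,Y)}$ gives the two displayed equalities for $X+Y$. I expect the main obstacle to be purely notational: juggling the tensor products, the projections $p_j^{D(2)}$, and the injections $i_k^{D(2)}$ without losing track of which arguments of $\circ_{M^M}$ they are acting on. No new conceptual ingredient beyond the machinery already deployed for Proposition \ref{t4.4} is required; microlinearity of $M^M$ enters only implicitly, through the uniqueness of $l$ that it guarantees.
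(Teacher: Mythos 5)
Your proposal is correct and follows essentially the same route as the paper: the paper likewise reduces the claim, via the pullback diagram (\ref{3.1.1}) and the microlinearity of $M^{M}$ (i.e.\ the uniqueness characterizing $l_{(X,Y)}$), to the four restriction identities along $\mathrm{id}_{M^{M}}\otimes W_{i_{k}^{D(2)}}$, and then verifies them by the very bifunctoriality square (\ref{4.4.3}) together with the observation that $p_{j}^{D(2)}\circ i_{k}^{D(2)}$ is the identity for $j=k$ and the zero map otherwise. The derivation of the $X+Y$ formula from the definition of addition is likewise how the paper concludes.
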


\begin{proof}
With due regard to the limit diagram (\ref{3.1.1}) of Weil algebras and the
microlinearity of $M^{M}$, it suffices to see that
\begin{align}
(\mathrm{id}_{M^{M}}\otimes W_{i_{1}^{D(2)}})((\circ_{M^{M}}\otimes
\mathrm{id}_{W_{D(2)}})((\mathrm{id}_{M^{M}}\otimes W_{p_{1}^{D(2)}%
})(X),(\mathrm{id}_{M^{M}}\otimes W_{p_{2}^{D(2)}})(Y)))  &  =X\label{4.5.1}\\
(\mathrm{id}_{M^{M}}\otimes W_{i_{2}^{D(2)}})((\circ_{M^{M}}\otimes
\mathrm{id}_{W_{D(2)}})((\mathrm{id}_{M^{M}}\otimes W_{p_{1}^{D(2)}%
})(X),(\mathrm{id}_{M^{M}}\otimes W_{p_{2}^{D(2)}})(Y)))  &  =Y\label{4.5.2}\\
(\mathrm{id}_{M^{M}}\otimes W_{i_{1}^{D(2)}})((\circ_{M^{M}}\otimes
\mathrm{id}_{W_{D(2)}})((\mathrm{id}_{M^{M}}\otimes W_{p_{1}^{D(2)}%
})(Y),(\mathrm{id}_{M^{M}}\otimes W_{p_{2}^{D(2)}})(X)))  &  =Y\label{4.5.3}\\
(\mathrm{id}_{M^{M}}\otimes W_{i_{2}^{D(2)}})((\circ_{M^{M}}\otimes
\mathrm{id}_{W_{D(2)}})((\mathrm{id}_{M^{M}}\otimes W_{p_{1}^{D(2)}%
})(Y),(\mathrm{id}_{M^{M}}\otimes W_{p_{2}^{D(2)}})(X)))  &  =X\label{4.5.4}%
\end{align}
Here we deal only with (\ref{4.5.1}), leaving similar treatments of
(\ref{4.5.2})-(\ref{4.5.4}) safely to the reader. Exploiting the commutativity
of the diagram (\ref{4.4.3}), we have
\begin{align*}
&  (\mathrm{id}_{M^{M}}\otimes W_{i_{1}^{D(2)}})((\circ_{M^{M}}\otimes
\mathrm{id}_{W_{D(2)}})((\mathrm{id}_{M^{M}}\otimes W_{p_{1}^{D(2)}%
})(X),(\mathrm{id}_{M^{M}}\otimes W_{p_{2}^{D(2)}})(Y)))\\
&  =(\circ_{M^{M}}\otimes\mathrm{id}_{W_{D}})((\mathrm{id}_{M^{M}}\otimes
W_{i_{1}^{D(2)}})\circ(\mathrm{id}_{M^{M}}\otimes W_{p_{1}^{D(2)}})(X),\\
&  (\mathrm{id}_{M^{M}}\otimes W_{i_{1}^{D(2)}})\circ(\mathrm{id}_{M^{M}%
}\otimes W_{p_{2}^{D(2)}})(Y)))\\
&  =(\circ_{M^{M}}\otimes\mathrm{id}_{W_{D}})((\mathrm{id}_{M^{M}}%
\otimes(W_{i_{1}^{D(2)}}\circ W_{p_{1}^{D(2)}}))(X),(\mathrm{id}_{M^{M}%
}\otimes(W_{i_{1}^{D(2)}}\circ W_{p_{2}^{D(2)}})(Y))\\
&  =X
\end{align*}
Thus we are done.
\end{proof}

\begin{theorem}
\label{t4.6}For any $X,Y\in\aleph(M)$, there exists a unique $[X,Y]\in
\aleph(M)$ such that
\begin{align*}
&  (\mathrm{id}_{M^{M}}\otimes W_{(d_{1},d_{2})\in D^{2}\mapsto(d_{1}%
,d_{2},-d_{1},-d_{2})\in D^{4}})(Y\ast X\ast Y\ast X)\\
&  =(\mathrm{id}_{M^{M}}\otimes W_{(d_{1},d_{2})\in D^{2}\mapsto d_{1}d_{2}\in
D})([X,Y])
\end{align*}

\end{theorem}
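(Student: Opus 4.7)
I would follow the SDG construction of the Lie bracket in \S\S 3.2 of \cite{lav}, with point-wise synthetic arguments replaced by appeals to the microlinearity of $M^{M}$ (which is microlinear and Weil exponentiable together with $M$ since such Fr\"{o}licher spaces form a cartesian closed category). The crucial algebraic input is that the square
\[
\begin{array}[c]{ccc}
W_{D} & \xrightarrow{W_{\mu}} & W_{D^{2}}\\
\downarrow &  & \downarrow\\
\mathbf{R} & \rightarrow & W_{D(2)}
\end{array}
\]
is a pullback in $\mathbf{W}$, where $\mu:D^{2}\rightarrow D$ is the putative mapping $(d_{1},d_{2})\mapsto d_{1}d_{2}$, the left vertical arrow is the augmentation, the bottom arrow is the unit, and the right vertical arrow is the canonical surjection coming from the inclusion $D(2)\hookrightarrow D^{2}$. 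This is verified by a direct check on the bases $\{1,X_{1},X_{2},X_{1}X_{2}\}$, $\{1,X_{1},X_{2}\}$ and $\{1,X\}$. Applying $M^{M}\otimes(-)$ and using microlinearity gives a pullback in $\mathbf{FS}$ which reformulates as the following factorization principle: any $\tau\in M^{M}\otimes W_{D^{2}}$ whose two axis-restrictions $(\mathrm{id}_{M^{M}}\otimes W_{i_{k}^{D(2)}})(\tau)$ ($k=1,2$) both equal $\mathbf{0}_{\mathrm{id}_{M}}$ is of the form $(\mathrm{id}_{M^{M}}\otimes W_{\mu})([X,Y])$ for a unique $[X,Y]\in\aleph(M)$.

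I then set
\[
\tau:=(\mathrm{id}_{M^{M}}\otimes W_{(d_{1},d_{2})\in D^{2}\mapsto(d_{1},d_{2},-d_{1},-d_{2})\in D^{4}})(Y\ast X\ast Y\ast X),
\]
and aim to verify the two axis-vanishings. For $(\mathrm{id}_{M^{M}}\otimes W_{i_{1}^{D(2)}})(\tau)$, bifunctoriality of $\otimes$ (in the style of the proofs of Propositions \ref{t4.2} and \ref{t4.4}) reduces the task to computing the restriction of $Y\ast X\ast Y\ast X$ along the putative mapping $d\in D\mapsto(d,0,-d,0)\in D^{4}$. Factoring this as $d\mapsto(d,-d)$ followed by $(a,b)\mapsto(a,0,b,0)$, the inner restriction collapses the two $Y$-slots to $\mathbf{0}_{\mathrm{id}_{M}}$; combining this with associativity of $\ast$ (Proposition \ref{t4.2}), Proposition \ref{t4.3} on absorbing identity-valued factors, and the unitality of $\circ_{M^{M}}$ with respect to $\mathbf{0}_{\mathrm{id}_{M}}$, the whole expression reduces to the restriction of $X\ast X$ along $d\mapsto(d,-d)$, which the corollary following Proposition \ref{t4.4} identifies with $\mathbf{0}_{\mathrm{id}_{M}}$. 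The symmetric computation with $i_{2}^{D(2)}$ eliminates the $X$-slots and yields $\mathbf{0}_{\mathrm{id}_{M}}$ by the same corollary applied to $Y$. The factorization principle of the first paragraph then produces the desired unique $[X,Y]\in\aleph(M)$.

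The chief obstacle is justifying the Weil-algebra pullback cleanly and converting it into the factorization principle used in the final step; this is where the "synthetic" reasoning of \cite{lav}, which would invoke a single application of the Kock--Lawvere axiom, has to be reformulated entirely in microlinear Fr\"{o}licher terms. The rest is bifunctorial bookkeeping of the sort already exhibited in the proofs of Propositions \ref{t4.2}--\ref{t4.5}, with the one ancillary observation that for any $Z\in\aleph(M)$ one has $(\mathrm{id}_{M^{M}}\otimes W_{d\in D\mapsto 0\in D})(Z)=\mathbf{0}_{\mathrm{id}_{M}}$, which is precisely what allows identity-at-zero factors inside $\ast$-products to be absorbed.
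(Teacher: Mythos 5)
Your proposal is correct and follows essentially the same route as the paper: both characterize $W_{(d_{1},d_{2})\in D^{2}\mapsto d_{1}d_{2}\in D}:W_{D}\rightarrow W_{D^{2}}$ as a finite limit in $\mathbf{W}$ (you via a pullback over $W_{D(2)}$ against $\mathbf{R}$, the paper via a joint equalizer of the two axis restrictions and the zero restriction $W_{D^{2}}\rightarrow W_{D}$), invoke the microlinearity of $M^{M}$ to transport that limit, and then verify that the two axis restrictions of the commutator expression reduce, by the bifunctionality of $\otimes$ together with Propositions \ref{t4.2} and \ref{t4.3}, to $(\circ_{M^{M}}\otimes\mathrm{id}_{W_{D}})(X,-X)$ and $(\circ_{M^{M}}\otimes\mathrm{id}_{W_{D}})(Y,-Y)$ respectively, which vanish by the corollary to Proposition \ref{t4.4}. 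The two limit diagrams encode the identical factorization principle, so the difference from the paper's proof is purely presentational.
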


\begin{proof}
With due regard to the microlinearity of $M^{M}$ and the limit diagram of Weil
algebras
\[
M^{M}\otimes W_{D}
\begin{array}
[c]{c}%
\mathrm{id}_{M^{M}}\otimes W_{i_{1}^{D^{2}}}\\
\longleftarrow\\
\mathrm{id}_{M^{M}}\otimes W_{i_{2}^{D^{2}}}\\
\longleftarrow\\
\mathrm{id}_{M^{M}}\otimes W_{d\in D\mapsto(0,0)\in D^{2}}\\
\longleftarrow\\
\
\end{array}
M^{M}\otimes W_{D^{2}}
\begin{array}
[c]{c}%
\mathrm{id}_{M^{M}}\otimes W_{(d_{1},d_{2})\in D^{2}\mapsto d_{1}d_{2}\in D}\\
\longleftarrow\\
\
\end{array}
M^{M}\otimes W_{D}%
\]
it suffices to see that
\begin{align}
(\mathrm{id}_{M^{M}}\otimes W_{i_{1}^{D^{2}}})(\mathrm{id}_{M^{M}}\otimes
W_{(d_{1},d_{2})\in D^{2}\mapsto(d_{1},d_{2},-d_{1},-d_{2})\in D^{4}}(Y\ast
X\ast Y\ast X))  &  =I_{1}\label{4.6.1}\\
(\mathrm{id}_{M^{M}}\otimes W_{i_{2}^{D^{2}}})(\mathrm{id}_{M^{M}}\otimes
W_{(d_{1},d_{2})\in D^{2}\mapsto(d_{1},d_{2},-d_{1},-d_{2})\in D^{4}}(Y\ast
X\ast Y\ast X))  &  =I_{1}\label{4.6.2}%
\end{align}
Here we deal only with (\ref{4.6.1}), leaving a similar treatment of
(\ref{4.6.2}) safely to the reader. By dint of the bifunctionality of
$\otimes$,\ it is easy to see that
\[%
\begin{array}
[c]{ccc}
& \circ_{M^{M}}\otimes\mathrm{id}_{W_{D^{4}}} & \\
(M^{M}\otimes W_{D^{4}})\times(M^{M}\otimes W_{D4}) & \rightarrow &
M^{M}\otimes W_{D^{4}}\\
\downarrow &  & \downarrow\\
(M^{M}\otimes W_{D})\times(M^{M}\otimes W_{D}) & \rightarrow & M^{M}\otimes
W_{D}\\
& \circ_{M^{M}}\otimes\mathrm{id}_{W_{D}} &
\end{array}
\]
with the left vertical arrow standing for $(\mathrm{id}_{M^{M}}\otimes W_{d\in
D\mapsto(d,0,-d,0)\in D^{4}})\times(\mathrm{id}_{M^{M}}\otimes W_{d\in
D\mapsto(d,0,-d,0)\in D^{4}})$ and the right vertical arrow standing for
$\mathrm{id}_{M^{M}}\otimes W_{d\in D\mapsto(d,0,-d,0)\in D^{4}}$, so that we
have
\begin{align*}
&  (\mathrm{id}_{M^{M}}\otimes W_{d\in D\mapsto(d,0,-d,0)\in D^{4}})(Y\ast
X\ast Y\ast X))\\
&  =(\circ_{M^{M}}\otimes\mathrm{id}_{W_{D}})((\mathrm{id}_{M^{M}}\otimes
W_{d\in D\mapsto(d,0)\in D^{2}})(Y\ast X),(\mathrm{id}_{M^{M}}\otimes W_{d\in
D\mapsto(-d,0)\in D^{2}})(Y\ast X))
\end{align*}
Therefore we have
\begin{align*}
&  (\mathrm{id}_{M^{M}}\otimes W_{i_{1}^{D^{2}}})(\mathrm{id}_{M^{M}}\otimes
W_{(d_{1},d_{2})\in D^{2}\mapsto(d_{1},d_{2},d_{1},d_{2})\in D^{4}}(Y\ast
X\ast Y\ast X))\\
&  =(\mathrm{id}_{M^{M}}\otimes W_{d\in D\mapsto(d,0,-d,0)\in D^{4}})(Y\ast
X\ast Y\ast X))\\
&  =(\circ_{M^{M}}\otimes\mathrm{id}_{W_{D}})((\mathrm{id}_{M^{M}}\otimes
W_{d\in D\mapsto(d,0)\in D^{2}})(Y\ast X),(\mathrm{id}_{M^{M}}\otimes W_{d\in
D\mapsto(-d,0)\in D^{2}})(Y\ast X))\\
&  =(\circ_{M^{M}}\otimes\mathrm{id}_{W_{D}})(X,-X)\\
&  =\mathbf{0}_{\mathrm{id}_{M}}%
\end{align*}
Thus we are done.
\end{proof}

\begin{proposition}
\label{t4.7}For any $X,Y\in\aleph(M)$, we have
\[
\lbrack X,Y]=-[Y,X]
\]

\end{proposition}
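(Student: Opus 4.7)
The strategy exploits the uniqueness clause of Theorem~\ref{t4.6} together with a telescoping cancellation via the Corollary to Proposition~\ref{t4.4}, which is then converted into an additivity identity using Proposition~\ref{t4.5} and the bifunctoriality of $\otimes$.

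Set $\psi\colon D^{2}\to D$, $(d_{1},d_{2})\mapsto d_{1}d_{2}$, and $\chi\colon D^{2}\to D^{4}$, $(d_{1},d_{2})\mapsto(d_{1},d_{2},-d_{1},-d_{2})$, and let $L:=(\mathrm{id}_{M^{M}}\otimes W_{\chi})(Y\ast X\ast Y\ast X)$. By Theorem~\ref{t4.6}, $L=(\mathrm{id}_{M^{M}}\otimes W_{\psi})([X,Y])$; analogously, $(\mathrm{id}_{M^{M}}\otimes W_{\psi})([Y,X])=(\mathrm{id}_{M^{M}}\otimes W_{\chi})(X\ast Y\ast X\ast Y)$. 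Using the coordinate swap $\sigma\colon(d_{1},d_{2})\mapsto(d_{2},d_{1})$ — for which $\psi\circ\sigma=\psi$ — together with contravariant functoriality of $W$, the latter element can be rewritten as $L':=(\mathrm{id}_{M^{M}}\otimes W_{\chi\circ\sigma})(X\ast Y\ast X\ast Y)$ with $\chi\circ\sigma(d_{1},d_{2})=(d_{2},d_{1},-d_{2},-d_{1})$.

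The key step is to verify that the composite $(\circ_{M^{M}}\otimes\mathrm{id}_{W_{D^{2}}})(L,L')=\mathrm{id}_{M^{M}}$ in $M^{M}\otimes W_{D^{2}}$. In the flow picture, $L$ realises $\Phi^{Y}_{-d_{2}}\Phi^{X}_{-d_{1}}\Phi^{Y}_{d_{2}}\Phi^{X}_{d_{1}}$ and $L'$ realises $\Phi^{X}_{-d_{1}}\Phi^{Y}_{-d_{2}}\Phi^{X}_{d_{1}}\Phi^{Y}_{d_{2}}$, so that $L'\circ L$ telescopes to the identity by four successive applications of $\Phi^{Z}_{a}\circ\Phi^{Z}_{-a}=\mathrm{id}_{M}$ (Corollary to Proposition~\ref{t4.4}). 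Formally, each cancellation step is performed by factoring the composition through an appropriate $W_{D^{n}}$ via associativity of $\ast$ (Proposition~\ref{t4.2}) and invoking bifunctoriality of $\otimes$, mirroring the bookkeeping pattern used to verify (\ref{4.6.1}) in the proof of Theorem~\ref{t4.6}.

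To finish, one combines Proposition~\ref{t4.5} with the identification $\aleph(M)=(M^{M}\otimes W_{D})_{\mathrm{id}_{M}}$: for any $\alpha,\beta\in\aleph(M)$, $\alpha+\beta=(\circ_{M^{M}}\otimes\mathrm{id}_{W_{D}})(\alpha,\beta)$. Pulling through $W_{\psi}$ via bifunctoriality yields
\[
(\mathrm{id}_{M^{M}}\otimes W_{\psi})(\alpha+\beta)=(\circ_{M^{M}}\otimes\mathrm{id}_{W_{D^{2}}})\bigl((\mathrm{id}_{M^{M}}\otimes W_{\psi})(\alpha),(\mathrm{id}_{M^{M}}\otimes W_{\psi})(\beta)\bigr).
\]
With $\alpha=[X,Y]$ and $\beta=[Y,X]$, the right-hand side is $L'\circ L=\mathrm{id}_{M^{M}}=(\mathrm{id}_{M^{M}}\otimes W_{\psi})(\mathbf{0}_{\mathrm{id}_{M}})$; Theorem~\ref{t4.6}'s uniqueness (now applied with the zero vector field playing the rôle of the bracket) then forces $[X,Y]+[Y,X]=\mathbf{0}_{\mathrm{id}_{M}}$, whence $[X,Y]=-[Y,X]$. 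The main obstacle I expect is the telescoping verification in $M^{M}\otimes W_{D^{2}}$: although the four cancellations are transparent on the flow side, carrying them out through bifunctoriality and the Corollary to Proposition~\ref{t4.4} requires careful bookkeeping of the $D^{2}$-parametrisation.
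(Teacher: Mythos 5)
Your argument is correct and follows essentially the same route as the paper: the paper likewise reduces the claim, via Proposition \ref{t4.5} and the uniqueness supplied by the limit diagram in Theorem \ref{t4.6}, to showing that $(\mathrm{id}_{M^{M}}\otimes W_{(d_{1},d_{2})\in D^{2}\mapsto(d_{1}d_{2},d_{1}d_{2})\in D^{2}})([X,Y]\ast\lbrack Y,X])=I_{2}$, and then performs exactly the four-fold telescoping cancellation (alternating the Corollary to Proposition \ref{t4.4} with the bifunctoriality of $\otimes$) that you describe. The only differences are presentational --- the paper works with the single eight-fold product $Y\ast X\ast Y\ast X\ast X\ast Y\ast X\ast Y$ instead of composing your $L$ and $L'$ separately, and it writes out in full the bookkeeping you defer --- so there is no gap.
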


\begin{proof}
With due regard to Proposition \ref{t4.5}, it suffices to show that
\[
(\mathrm{id}_{M^{M}}\otimes W_{(d_{1},d_{2})\in D^{2}\mapsto(d_{1}d_{2}%
,d_{1}d_{2})\in D^{2}})([X,Y]\ast\lbrack Y,X])=I_{2}%
\]
This follows from
\begin{align*}
&  (\mathrm{id}_{M^{M}}\otimes W_{(d_{1},d_{2})\in D^{2}\mapsto(d_{1}%
d_{2},d_{1}d_{2})\in D^{2}})([X,Y]\ast\lbrack Y,X])\\
&  =(\mathrm{id}_{M^{M}}\otimes W_{(d_{1},d_{2})\in D^{2}\mapsto(d_{2}%
,d_{1},-d_{2},-d_{1},d_{1},d_{2},-d_{1},-d_{2})\in D^{8}})(Y\ast X\ast Y\ast
X\ast X\ast Y\ast X\ast Y)\\
&  =(\circ_{M^{M}}\otimes\mathrm{id}_{W_{D^{2}}})((\mathrm{id}_{M^{M}}\otimes
W_{(d_{1},d_{2})\in D^{2}\mapsto(d_{2},d_{1},-d_{2})\in D^{3}})(Y\ast X\ast
Y),\\
&  (\mathrm{id}_{M^{M}}\otimes W_{(d_{1},d_{2})\in D^{2}\mapsto(-d_{1}%
,d_{1})\in D^{2}})(X\ast X),\\
&  (\mathrm{id}_{M^{M}}\otimes W_{(d_{1},d_{2})\in D^{2}\mapsto(d_{2}%
,-d_{1},-d_{2})\in D^{3}})(Y\ast X\ast Y))\\
&  \text{[by the bifunctionality of }\otimes\text{]}\\
&  =(\circ_{M^{M}}\otimes\mathrm{id}_{W_{D^{2}}})((\mathrm{id}_{M^{M}}\otimes
W_{(d_{1},d_{2})\in D^{2}\mapsto(d_{2},d_{1},-d_{2})\in D^{3}})(Y\ast X\ast
Y),I_{2},\\
&  (\mathrm{id}_{M^{M}}\otimes W_{(d_{1},d_{2})\in D^{2}\mapsto(d_{2}%
,-d_{1},-d_{2})\in D^{3}})(Y\ast X\ast Y))\\
&  =(\circ_{M^{M}}\otimes\mathrm{id}_{W_{D^{2}}})((\mathrm{id}_{M^{M}}\otimes
W_{(d_{1},d_{2})\in D^{2}\mapsto(d_{2},d_{1},-d_{2})\in D^{3}})(Y\ast X\ast
Y),\\
&  (\mathrm{id}_{M^{M}}\otimes W_{(d_{1},d_{2})\in D^{2}\mapsto(d_{2}%
,-d_{1},-d_{2})\in D^{3}})(Y\ast X\ast Y))\\
&  =(\mathrm{id}_{M^{M}}\otimes W_{(d_{1},d_{2})\in D^{2}\mapsto(d_{2}%
,d_{1},-d_{2},d_{2},-d_{1},-d_{2})\in D^{6}})(Y\ast X\ast Y\ast Y\ast X\ast
Y)\\
&  \text{[by the bifunctionality of }\otimes\text{]}\\
&  =(\circ_{M^{M}}\otimes\mathrm{id}_{W_{D}})((\mathrm{id}_{M^{M}}\otimes
W_{(d_{1},d_{2})\in D^{2}\mapsto(d_{2},d_{1})\in D^{2}})(X\ast Y),\\
&  (\mathrm{id}_{M^{M}}\otimes W_{(d_{1},d_{2})\in D^{2}\mapsto(-d_{2}%
,d_{2})\in D^{2}})(Y\ast Y),(\mathrm{id}_{M^{M}}\otimes W_{(d_{1},d_{2})\in
D^{2}\mapsto(-d_{1},-d_{2})\in D^{2}})(Y\ast X))\\
&  \text{[by the bifunctionality of }\otimes\text{]}\\
&  =(\circ_{M^{M}}\otimes\mathrm{id}_{W_{D^{2}}})((\mathrm{id}_{M^{M}}\otimes
W_{(d_{1},d_{2})\in D^{2}\mapsto(d_{2},d_{1})\in D^{2}})(X\ast Y),I_{2},\\
&  (\mathrm{id}_{M^{M}}\otimes W_{(d_{1},d_{2})\in D^{2}\mapsto(-d_{1}%
,-d_{2})\in D^{2}})(Y\ast X))\\
&  =(\circ_{M^{M}}\otimes\mathrm{id}_{W_{D^{2}}})((\mathrm{id}_{M^{M}}\otimes
W_{(d_{1},d_{2})\in D^{2}\mapsto(d_{2},d_{1})\in D^{2}})(X\ast Y),\\
&  (\mathrm{id}_{M^{M}}\otimes W_{(d_{1},d_{2})\in D^{2}\mapsto(-d_{1}%
,-d_{2})\in D^{2}})(Y\ast X))\\
&  =(\mathrm{id}_{M^{M}}\otimes W_{(d_{1},d_{2})\in D^{2}\mapsto(d_{2}%
,d_{1},-d_{1},-d_{2})\in D^{4}})(Y\ast X\ast X\ast Y)\\
&  \text{[by the bifunctionality of }\otimes\text{]}\\
&  =(\circ_{M^{M}}\otimes\mathrm{id}_{W_{D^{2}}})((\mathrm{id}_{M^{M}}\otimes
W_{(d_{1},d_{2})\in D^{2}\mapsto d_{2}\in D})(Y),(\mathrm{id}_{M^{M}}\otimes
W_{(d_{1},d_{2})\in D^{2}\mapsto(d_{1},-d_{1})\in D^{2}})(X\ast X),\\
&  (\mathrm{id}_{M^{M}}\otimes W_{(d_{1},d_{2})\in D^{2}\mapsto-d_{2}\in
D})(Y))\\
&  \text{[by the bifunctionality of }\otimes\text{]}\\
&  =(\circ_{M^{M}}\otimes\mathrm{id}_{W_{D^{2}}})((\mathrm{id}_{M^{M}}\otimes
W_{(d_{1},d_{2})\in D^{2}\mapsto d_{2}\in D})(Y),I_{2},\\
&  (\mathrm{id}_{M^{M}}\otimes W_{(d_{1},d_{2})\in D^{2}\mapsto-d_{2}\in
D})(Y))\\
&  =(\circ_{M^{M}}\otimes\mathrm{id}_{W_{D^{2}}})((\mathrm{id}_{M^{M}}\otimes
W_{(d_{1},d_{2})\in D^{2}\mapsto d_{2}\in D})(Y),(\mathrm{id}_{M^{M}}\otimes
W_{(d_{1},d_{2})\in D^{2}\mapsto-d_{2}\in D})(Y))\\
&  =(\mathrm{id}_{M^{M}}\otimes W_{(d_{1},d_{2})\in D^{2}\mapsto(d_{2}%
,-d_{2})\in D^{2}})(Y\ast Y)\\
&  =I_{2}%
\end{align*}

\end{proof}

The proof of the Jacobi identity is postponed to the subsequent two sections.

\section{The General Jacobi Identity}

The principal objective in this section is to give a proof of the general
Jacobi identity. Our harder treatment of the general Jacobi identity is
preceded by a simpler treatment of the primordial Jacobi identity, because the
latter is easier to grasp intuitively.

\begin{proposition}
\label{t5.1}The diagram
\[%
\begin{array}
[c]{ccccc}
&  & \mathrm{id}_{M}\otimes W_{\varphi} &  & \\
& M\otimes W_{D^{3}\{(1,3),(2,3)\}} & \rightarrow & M\otimes W_{D^{2}} & \\
\mathrm{id}_{M}\otimes W_{\psi} & \downarrow &  & \downarrow & \mathrm{id}%
_{M}\otimes W_{i_{D(2)}^{D^{2}}}\\
& M\otimes W_{D^{2}} & \rightarrow & M\otimes W_{D(2)} & \\
&  & \mathrm{id}_{M}\otimes W_{i_{D(2)}^{D^{2}}} &  &
\end{array}
\]
is a pullback diagram, where the putative mapping $\varphi:D^{2}\rightarrow
D^{3}\{(1,3),(2,3)\}$ is
\[
(d_{1},d_{2})\in D^{2}\mapsto(d_{1},d_{2},0)\in D^{3}\{(1,3),(2,3)\}
\]
while the putative mapping $\psi:D^{2}\rightarrow D^{3}\{(1,3),(2,3)\}$ is
\[
(d_{1},d_{2})\in D^{2}\mapsto(d_{1},d_{2},d_{1}d_{2})\in D^{3}\{(1,3),(2,3)\}
\]

\end{proposition}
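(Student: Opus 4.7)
The plan is to follow the strategy already used in Proposition~\ref{t3.1}: exhibit the corresponding square of Weil algebras,
\[
\begin{array}{ccc}
W_{D^{3}\{(1,3),(2,3)\}} & \rightarrow & W_{D^{2}}\\
\downarrow &  & \downarrow\\
W_{D^{2}} & \rightarrow & W_{D(2)}
\end{array}
\]
with the upper arrow $W_{\varphi}$, the left arrow $W_{\psi}$, and both right-hand and bottom arrows equal to $W_{i_{D(2)}^{D^{2}}}$, as a pullback in $\mathbf{W}$, and then invoke the microlinearity of $M$ to transport this into the desired pullback in $\mathbf{FS}$. Commutativity of the square is automatic at the level of infinitesimal objects: restricting $\psi$ along $D(2)\hookrightarrow D^{2}$ gives $(d_{1},d_{2})\mapsto(d_{1},d_{2},d_{1}d_{2})=(d_{1},d_{2},0)$, which is exactly the restriction of $\varphi$.

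For the universal property I would work with the explicit basis of $W_{D^{3}\{(1,3),(2,3)\}}$. The relations $Y_{i}^{2}=0$ together with $Y_{1}Y_{3}=0$ and $Y_{2}Y_{3}=0$ leave just the five monomials $1,Y_{1},Y_{2},Y_{3},Y_{1}Y_{2}$, so a generic element reads $a+a_{1}Y_{1}+a_{2}Y_{2}+a_{3}Y_{3}+a_{12}Y_{1}Y_{2}$. Both $W_{\varphi}$ and $W_{\psi}$ send $Y_{1},Y_{2}$ to $X_{1},X_{2}$, while $W_{\varphi}$ sends $Y_{3}$ to $0$ and $W_{\psi}$ sends $Y_{3}$ to $X_{1}X_{2}$. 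Thus the two images in $W_{D^{2}}$ are respectively
\[
a+a_{1}X_{1}+a_{2}X_{2}+a_{12}X_{1}X_{2} \quad \text{and} \quad a+a_{1}X_{1}+a_{2}X_{2}+(a_{3}+a_{12})X_{1}X_{2}.
\]
Given any cone $(\alpha,\beta)\in W_{D^{2}}\times_{W_{D(2)}}W_{D^{2}}$, the condition that $\alpha$ and $\beta$ agree modulo $X_{1}X_{2}$ forces equality of their scalar, $X_{1}$, and $X_{2}$ coefficients. Writing $\alpha=a+a_{1}X_{1}+a_{2}X_{2}+\alpha_{12}X_{1}X_{2}$ and $\beta=a+a_{1}X_{1}+a_{2}X_{2}+\beta_{12}X_{1}X_{2}$, the unique preimage is obtained by setting $a_{12}=\alpha_{12}$ and $a_{3}=\beta_{12}-\alpha_{12}$.

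Finally, since $M$ is microlinear, the functor $M\otimes(\cdot)$ carries the verified pullback of Weil algebras to a pullback in $\mathbf{FS}$, which is exactly the square in the statement. The only genuine obstacle is bookkeeping the basis of $W_{D^{3}\{(1,3),(2,3)\}}$ correctly, but once one records that $Y_{3}$ annihilates both $Y_{1}$ and $Y_{2}$, the entire verification reduces to transparent linear algebra in a five-dimensional algebra.
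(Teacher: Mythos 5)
Your proof is correct and takes essentially the same route as the paper: exhibit the corresponding square of Weil algebras as a pullback and then invoke the microlinearity of $M$ to transfer it along $M\otimes(\cdot)$. The only difference is that the paper merely asserts the Weil-algebra square is a pullback, whereas you verify it explicitly via the five-dimensional basis $1,Y_{1},Y_{2},Y_{3},Y_{1}Y_{2}$ (in the same style as the paper's Lemma~\ref{t5.4}), and that computation is correct.
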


\begin{proof}
This follows from the microlinearity of $M$ and the pullback diagram of Weil
algebras
\[%
\begin{array}
[c]{ccccc}
&  & W_{\varphi} &  & \\
& W_{D^{3}\{(1,3),(2,3)\}} & \rightarrow & W_{D^{2}} & \\
W_{\psi} & \downarrow &  & \downarrow & W_{i_{D(2)}^{D^{2}}}\\
& W_{D^{2}} & \rightarrow & W_{D(2)} & \\
&  & W_{i_{D(2)}^{D^{2}}} &  &
\end{array}
\]

\end{proof}

\begin{corollary}
For any $\gamma_{1},\gamma_{2}\in M\otimes W_{D^{2}}$, if $\left(
\mathrm{id}_{M}\otimes W_{i_{D(2)}^{D^{2}}}\right)  (\gamma_{1})=\left(
\mathrm{id}_{M}\otimes W_{i_{D(2)}^{D^{2}}}\right)  (\gamma_{2})$, then there
exists unique $\gamma\in M\otimes W_{D^{3}\{(1,3),(2,3)\}}$ with
$(\mathrm{id}_{M}\otimes W_{\varphi})(\gamma)=\gamma_{1}$ and $(\mathrm{id}%
_{M}\otimes W_{\psi})(\gamma)=\gamma_{2}$.
\end{corollary}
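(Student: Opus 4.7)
The plan is to derive this corollary as a direct consequence of Proposition \ref{t5.1} by applying the universal property of the pullback diagram to morphisms out of the terminal Fr\"olicher space $1$. Elements of a Fr\"olicher space $X$ are identified with morphisms $1 \to X$ in $\mathbf{FS}$, so each $\gamma_{i}$ corresponds to a morphism $1 \to M\otimes W_{D^{2}}$, and an element $\gamma \in M\otimes W_{D^{3}\{(1,3),(2,3)\}}$ is the same thing as a morphism $1 \to M\otimes W_{D^{3}\{(1,3),(2,3)\}}$.

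The hypothesis $(\mathrm{id}_{M}\otimes W_{i_{D(2)}^{D^{2}}})(\gamma_{1}) = (\mathrm{id}_{M}\otimes W_{i_{D(2)}^{D^{2}}})(\gamma_{2})$ says precisely that the two morphisms $\gamma_{1},\gamma_{2}:1 \to M\otimes W_{D^{2}}$ have a common composite with the cospan legs into $M\otimes W_{D(2)}$; in other words, they form a compatible cone over the cospan that appears as the lower-right part of the square of Proposition \ref{t5.1}. By the universal property of that pullback square, there exists a unique morphism $\gamma : 1 \to M\otimes W_{D^{3}\{(1,3),(2,3)\}}$ such that $(\mathrm{id}_{M}\otimes W_{\varphi})\circ\gamma = \gamma_{1}$ and $(\mathrm{id}_{M}\otimes W_{\psi})\circ\gamma = \gamma_{2}$. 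Reinterpreting $\gamma$ as an element of $M\otimes W_{D^{3}\{(1,3),(2,3)\}}$ finishes the argument.

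There is essentially no obstacle: the corollary is a mechanical translation of the pullback's universal property into the language of elements, using only that $\mathbf{FS}$ has a terminal object and that points of a Fr\"olicher space correspond bijectively to global sections from $1$. All of the genuine content, namely that the diagram in question is actually a pullback, has already been absorbed into Proposition \ref{t5.1} via the microlinearity of $M$ and the underlying pullback of Weil algebras; the corollary merely exposes the usual ``unique fill-in'' rephrasing.
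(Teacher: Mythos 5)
Your proof is correct and coincides with what the paper intends: the paper states this corollary with no proof at all, treating it as the immediate element-level restatement of the pullback property of Proposition \ref{t5.1}, which is exactly your observation that $\mathrm{Hom}_{\mathbf{FS}}(1,-)$ turns the pullback square into a pullback of underlying sets. Nothing further is needed.
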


\begin{remark}
Thus $\gamma$ encodes $\gamma_{1}$ and $\gamma_{2}$, which are in turn
recovered from $\gamma$ via $\mathrm{id}_{M}\otimes W_{\varphi}$ and
$\mathrm{id}_{M}\otimes W_{\psi}$ respectively.
\end{remark}

\begin{notation}
We will write $g_{(\gamma_{1},\gamma_{2})}$ for $\gamma$ in the above corollary.
\end{notation}

\begin{definition}
The \textit{strong difference} $\gamma_{2}\overset{\cdot}{-}\gamma_{1}\in
M\otimes W_{D}$ is defined to be
\[
(\mathrm{id}_{M}\otimes W_{d\in D\mapsto(0,0,d)\in D^{3}\{(1,3),(2,3)\}}%
)(g_{(\gamma_{1},\gamma_{2})})
\]

\end{definition}

The following is the prototype for the general Jacobi identity.

\begin{theorem}
\label{t5.2}(The Primordial Jacobi Identity) Let $\gamma_{1},\gamma_{2}%
,\gamma_{3}\in M\otimes W_{D^{2}}$. As long as the following three expressions
are well defined (i.e., $\left(  \mathrm{id}_{M}\otimes W_{i_{D(2)}^{D^{2}}%
}\right)  (\gamma_{1})=\left(  \mathrm{id}_{M}\otimes W_{i_{D(2)}^{D^{2}}%
}\right)  (\gamma_{2})=\left(  \mathrm{id}_{M}\otimes W_{i_{D(2)}^{D^{2}}%
}\right)  (\gamma_{3})$), they sum up only to vanish:
\begin{align*}
&  \gamma_{2}\overset{\cdot}{-}\gamma_{1}\\
&  \gamma_{3}\overset{\cdot}{-}\gamma_{2}\\
&  \gamma_{1}\overset{\cdot}{-}\gamma_{3}%
\end{align*}

\end{theorem}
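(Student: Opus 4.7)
My plan is to realise $\gamma_1,\gamma_2,\gamma_3$ as images of a single element $G \in M \otimes A$ for a well-chosen auxiliary Weil algebra $A$, to extract each strong difference as the image of $G$ under an explicit Weil-algebra map, and then to exhibit a map out of $A$ that simultaneously encodes all three strong differences and that manifestly factors through the scalars, so that the sum is $\mathbf{0}_{x_0}$.

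Take $A$ to be the triple fibred product $W_{D^{2}}\times_{W_{D(2)}} W_{D^{2}}\times_{W_{D(2)}} W_{D^{2}}$ in $\mathbf{W}$. By exactly the same reasoning as Proposition \ref{t5.1}, this limit is identified with the six-dimensional Weil algebra $W_{D^{2}\oplus D\oplus D}$, which we present with basis $\{1,X_{1},X_{2},X_{1}X_{2},Y_{1},Y_{2}\}$ and only the quadratic product $X_{1}X_{2}$ non-vanishing. The three canonical projections $\pi_{k}:A\to W_{D^{2}}$ fix $X_{1},X_{2}$ and send the pair $(Y_{1},Y_{2})$ respectively to $(0,0)$, $(X_{1}X_{2},0)$, $(0,X_{1}X_{2})$. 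The hypothesis that the $\gamma_{k}$ agree on $D(2)$ is precisely the compatibility required by the triple pullback, so microlinearity of $M$ produces a unique $G\in M\otimes A$ with $(\mathrm{id}_{M}\otimes\pi_{k})(G)=\gamma_{k}$ for $k=1,2,3$.

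For each cyclic pair $(i,j)\in\{(1,2),(2,3),(3,1)\}$, the universal property underlying Proposition \ref{t5.1} yields a unique Weil-algebra map $\mu_{ij}:A\to W_{D^{3}\{(1,3),(2,3)\}}$ with $W_{\varphi}\circ\mu_{ij}=\pi_{i}$ and $W_{\psi}\circ\mu_{ij}=\pi_{j}$; hence $g_{(\gamma_{i},\gamma_{j})}=(\mathrm{id}_{M}\otimes\mu_{ij})(G)$. Post-composing with $W_{d\mapsto(0,0,d)}$ gives $f_{ij}:A\to W_{D}$ with $\gamma_{j}\overset{\cdot}{-}\gamma_{i}=(\mathrm{id}_{M}\otimes f_{ij})(G)$. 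Matching coefficients of $X_{1}X_{2}$ through $\varphi$ and $\psi$ one obtains explicitly $f_{12}:(Y_{1},Y_{2})\mapsto(d,0)$, $f_{23}:(Y_{1},Y_{2})\mapsto(-d,d)$, $f_{31}:(Y_{1},Y_{2})\mapsto(0,-d)$, with $X_{1},X_{2}$ sent to $0$ in each case.

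Now define $h:A\to W_{D(3)}$ by $h(Y_{1})=d_{1}-d_{2}$, $h(Y_{2})=d_{2}-d_{3}$ and $h(X_{1})=h(X_{2})=0$. Every $d_{i}d_{j}$ vanishes in $W_{D(3)}$, so $h$ respects the defining relations $Y_{l}^{2}=Y_{1}Y_{2}=X_{k}Y_{l}=0$ of $A$. By construction $W_{i_{k}^{D(3)}}\circ h=f_{ij}$ for the cyclic pair indexed by $k$, so $(\mathrm{id}_{M}\otimes h)(G)\in M\otimes W_{D(3)}$ is the element whose three canonical projections are the three strong differences; its image under $\mathrm{id}_{M}\otimes W_{+_{D(3)}}$ therefore computes their sum. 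But $W_{+_{D(3)}}\circ h$ sends $Y_{1}\mapsto d-d=0$ and $Y_{2}\mapsto d-d=0$ and kills $X_{1},X_{2}$, so it factors through $\mathbf{R}\hookrightarrow W_{D}$; applied to $G$ it yields $\mathbf{0}_{x_{0}}$, which is the desired identity. The principal obstacle is the middle step: the identification of each pair-encoding $g_{(\gamma_{i},\gamma_{j})}$ inside the single global encoding $G$, i.e.\ the computation of $\mu_{ij}$, is fiddly. Once those maps are in hand the cancellation $(d_{1}-d_{2})+(d_{2}-d_{3})+(d_{3}-d_{1})=0$ inside $W_{D(3)}$ is the Jacobi identity in infinitesimal form.
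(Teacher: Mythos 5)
Your proposal is correct and follows essentially the same route as the paper: your auxiliary algebra $A$ is precisely the paper's $W_{E}$ with $E=D^{2}\oplus D\oplus D=D^{4}\{(1,3),(2,3),(1,4),(2,4),(3,4)\}$ (Theorem \ref{t5.3}/Lemma \ref{t5.4}), your $G$ is $h_{(\gamma_{1},\gamma_{2},\gamma_{3})}$, your maps $f_{12},f_{23},f_{31}$ are the paper's $W_{d\mapsto(0,0,d,0)}$, $W_{d\mapsto(0,0,-d,d)}$, $W_{d\mapsto(0,0,0,-d)}$, and your $h$ is the paper's $W_{(d_{1},d_{2},d_{3})\in D(3)\mapsto(0,0,d_{1}-d_{2},d_{2}-d_{3})\in E}$, with the same telescoping cancellation at the end. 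The only (harmless) difference is that you obtain the pair-encoding maps $\mu_{ij}$ from the universal property of the pullback in Proposition \ref{t5.1} instead of writing them down explicitly and checking, which amounts to the same coefficient computation.
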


Its proof is based completely upon the following theorem.

\begin{theorem}
\label{t5.3}The diagram
\[%
\begin{array}
[c]{ccccccc}
& \mathrm{id}_{M}\otimes W_{i_{D(2)}^{D^{2}}} &  & M\otimes W_{D^{2}} &  &
\mathrm{id}_{M}\otimes W_{i_{D(2)}^{D^{2}}} & \\
&  & \swarrow & \uparrow & \searrow &  & \\
& M\otimes W_{D(2)} &  & M\otimes W_{E} &  & M\otimes W_{D(2)} & \\
\mathrm{id}_{M}\otimes W_{i_{D(2)}^{D^{2}}} & \uparrow & \swarrow &  &
\searrow & \uparrow & \mathrm{id}_{M}\otimes W_{i_{D(2)}^{D^{2}}}\\
& M\otimes W_{D^{2}} &  &  &  & M\otimes W_{D^{2}} & \\
&  & \searrow &  & \swarrow &  & \\
& \mathrm{id}_{M}\otimes W_{i_{D(2)}^{D^{2}}} &  & M\otimes W_{D(2)} &  &
\mathrm{id}_{M}\otimes W_{i_{D(2)}^{D^{2}}} &
\end{array}
\]
is a limit diagram, where the putative object $E$ is
\[
D^{4}\{(1,3),(2,3),(1,4),(2,4),(3,4)\}
\]
and the putative mapping $i_{D(2)}^{D^{2}}:D(2)\rightarrow D^{2}$ is
$(d_{1},d_{2})\in D(2)\mapsto(d_{1},d_{2})\in D^{2}$, while the three unnamed
arrows $M\otimes W_{E}\rightarrow M\otimes W_{D^{2}}$ are $\mathrm{id}%
_{M}\otimes W_{l_{i}}$ $(i=1,2,3)$ with the putative mappings $l_{i}%
:D^{2}\rightarrow E$ $(i=1,2,3)$ being
\begin{align*}
l_{1}  &  :(d_{1},d_{2})\in D^{2}\mapsto(d_{1},d_{2},0,0)\in E\\
l_{2}  &  :(d_{1},d_{2})\in D^{2}\mapsto(d_{1},d_{2},d_{1}d_{2},0)\in E\\
l_{3}  &  :(d_{1},d_{2})\in D^{2}\mapsto(d_{1},d_{2},0,d_{1}d_{2})\in E
\end{align*}

\end{theorem}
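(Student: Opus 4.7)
The plan is to reduce the statement to an assertion about Weil algebras and then verify it by iterated pullback, leveraging Proposition \ref{t5.1}.

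First I would invoke the microlinearity of $M$, which stipulates that $M \otimes -$ carries every finite limit diagram in $\mathbf{W}$ to a limit diagram in $\mathbf{FS}$. It therefore suffices to prove that the diagram obtained by stripping the $M \otimes$ and $\mathrm{id}_M \otimes$ prefixes is a limit diagram in $\mathbf{W}$, with $W_E$ at the apex and $W_{l_1}, W_{l_2}, W_{l_3}$ as its three legs to the copies of $W_{D^2}$.

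Next I would observe that all three ``base'' arrows are copies of the same map $W_{i_{D(2)}^{D^2}} : W_{D^2} \to W_{D(2)}$, so the limit in question is canonically the wide pullback $W_{D^2} \times_{W_{D(2)}} W_{D^2} \times_{W_{D(2)}} W_{D^2}$. I would build this wide pullback in two stages: by Proposition \ref{t5.1}, pulling back two of the copies of $W_{D^2}$ over $W_{D(2)}$ yields $W_F$ where $F = D^3\{(1,3),(2,3)\}$, with projections $W_\varphi$ and $W_\psi$. Moreover, the two compositions $W_{i_{D(2)}^{D^2}} \circ W_\varphi$ and $W_{i_{D(2)}^{D^2}} \circ W_\psi$ agree, furnishing a canonical structure map $W_F \to W_{D(2)}$ against which the remaining copy of $W_{D^2}$ can be pulled back.

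The main obstacle — and the only substantive computation — is identifying this second pullback with $W_E$. My approach would be to propose the morphism $W_E \to W_F \times_{W_{D(2)}} W_{D^2}$ whose two components are (i) the algebra map $W_E \to W_F$ dual to the inclusion of infinitesimal objects $F \hookrightarrow E$, $(d_1, d_2, d_3) \mapsto (d_1, d_2, d_3, 0)$, and (ii) $W_{l_3}$, and then to check it is an isomorphism. Both sides have $\mathbf{R}$-dimension six (the basis of $W_E$ being $\{1, X_1, X_2, X_3, X_4, X_1 X_2\}$), so bijectivity reduces to a short linear-algebra check on basis vectors. Once the identification is in place, a final routine verification confirms that the three projections of the iterated pullback to $W_{D^2}$ recover $W_{l_1}, W_{l_2}, W_{l_3}$ under this identification, which follows by tracing through the definitions of $\varphi, \psi$ and the inclusion $F \hookrightarrow E$.
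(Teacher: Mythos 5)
Your proposal is correct, but it organizes the argument differently from the paper. The paper reduces to Weil algebras via microlinearity exactly as you do, but then (Lemma \ref{t5.4}) verifies the limit property of the hexagon in one shot: it writes out general elements $\gamma_{1},\gamma_{2},\gamma_{3}\in W_{D^{2}}$ and $\gamma\in W_{E}$ as polynomials, observes that the compatibility condition over the three copies of $W_{D(2)}$ amounts to the equality of constant and linear coefficients, and then exhibits the unique mediating element explicitly as $\gamma=a+a_{1}X_{1}+a_{2}X_{2}+a_{12}X_{1}X_{2}+(b_{12}-a_{12})X_{3}+(c_{12}-a_{12})X_{4}$. You instead recognize the limit as the wide pullback $W_{D^{2}}\times_{W_{D(2)}}W_{D^{2}}\times_{W_{D(2)}}W_{D^{2}}$, compute it as an iterated binary pullback whose first stage is the Weil-algebra diagram underlying Proposition \ref{t5.1} (giving $W_{D^{3}\{(1,3),(2,3)\}}$), and identify the second stage with $W_{E}$ by exhibiting the comparison map $(W_{j},W_{l_{3}})$ with $j:(d_{1},d_{2},d_{3})\mapsto(d_{1},d_{2},d_{3},0)$ and counting dimensions ($5+4-3=6=\dim W_{E}$); your verifications that $W_{\varphi}\circ W_{j}=W_{l_{1}}$ and $W_{\psi}\circ W_{j}=W_{l_{2}}$, and that the comparison is injective, all check out. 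Your route is more modular and reuses Proposition \ref{t5.1} so that the only fresh computation is a six-dimensional linear-algebra check; the paper's route is self-contained and has the side benefit of producing the explicit formula for the mediating element, and its brute-force style is the one the paper must fall back on anyway for the far messier Lemma \ref{t5.13}, where a clean iterated-pullback decomposition is less readily available. The only caveat is cosmetic: what you actually need is the pullback square of Weil algebras displayed in the \emph{proof} of Proposition \ref{t5.1}, not the proposition itself (which is the statement after applying $M\otimes\cdot$); since the paper asserts that square without proof, your appeal to it is on the same footing as the paper's own usage.
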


This theorem follows directly from the following lemma.

\begin{lemma}
\label{t5.4}The following diagram is a limit diagram of Weil algebras:
\[
\
\begin{array}
[c]{ccccccc}
& W_{i_{D(2)}^{D^{2}}} &  & W_{D^{2}} &  & W_{i_{D(2)}^{D^{2}}} & \\
&  & \swarrow & \uparrow & \searrow &  & \\
& W_{D(2)} &  & W_{E} &  & W_{D(2)} & \\
W_{i_{D(2)}^{D^{2}}} & \uparrow & \swarrow &  & \searrow & \uparrow &
W_{i_{D(2)}^{D^{2}}}\\
& W_{D^{2}} &  &  &  & W_{D^{2}} & \\
&  & \searrow &  & \swarrow &  & \\
& W_{i_{D(2)}^{D^{2}}} &  & W_{D(2)} &  & W_{i_{D(2)}^{D^{2}}} &
\end{array}
\]

\end{lemma}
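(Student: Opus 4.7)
The plan is to reduce the limit verification to explicit finite-dimensional linear algebra at the level of Weil algebras. First I would fix bases: $W_{D(2)} = \mathbf{R}\{1, u, v\}$ with $u^{2} = v^{2} = uv = 0$; $W_{D^{2}} = \mathbf{R}\{1, x, y, xy\}$ with $x^{2} = y^{2} = 0$; and $W_{E} = \mathbf{R}\{1, X_{1}, X_{2}, X_{3}, X_{4}, X_{1}X_{2}\}$, the last because the five relations defining $E$ together with each $X_{i}^{2}=0$ kill every quadratic monomial except $X_{1}X_{2}$. In these bases $W_{i_{D(2)}^{D^{2}}}: W_{D^{2}} \to W_{D(2)}$ is the canonical quotient by the one-dimensional ideal $(xy)$, and the algebra maps $W_{l_{i}}: W_{E} \to W_{D^{2}}$ are pulled back from the polynomial maps $l_{i}$: $W_{l_{1}}$ sends $(X_{1},X_{2},X_{3},X_{4}) \mapsto (x,y,0,0)$; $W_{l_{2}}$ sends $(X_{1},X_{2},X_{3},X_{4}) \mapsto (x,y,xy,0)$; and $W_{l_{3}}$ sends $(X_{1},X_{2},X_{3},X_{4}) \mapsto (x,y,0,xy)$; in all three cases $X_{1}X_{2} \mapsto xy$.

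Next I would unwind what the hexagonal limit actually demands. Because all three vertical projections $W_{D^{2}} \to W_{D(2)}$ are literally the same canonical quotient, the cyclic consistency condition collapses to a transitive one, and the limit coincides with the triple fibred product $L := W_{D^{2}} \times_{W_{D(2)}} W_{D^{2}} \times_{W_{D(2)}} W_{D^{2}}$. Concretely, an element of $L$ is a triple $(a^{(1)}, a^{(2)}, a^{(3)})$ whose constant and linear coefficients all agree, while the three $xy$-coefficients remain free; hence $\dim_{\mathbf{R}} L = 3 + 3 = 6 = \dim_{\mathbf{R}} W_{E}$, so a dimension count will already confirm any candidate isomorphism.

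The heart of the argument is then to check that the induced homomorphism $\Lambda: W_{E} \to L$, $a \mapsto (W_{l_{1}}(a), W_{l_{2}}(a), W_{l_{3}}(a))$, is an isomorphism. Landing in $L$ is immediate since all three $W_{l_{i}}$ agree modulo the ideal $(xy)$ (each sends $X_{3}, X_{4}$, and $X_{1}X_{2}$ into $(xy)$, so the three images descend to the same element of $W_{D(2)}$). For bijectivity, writing $a = a_{0} + a_{1}X_{1} + a_{2}X_{2} + a_{3}X_{3} + a_{4}X_{4} + a_{12}X_{1}X_{2}$, the three images share the common linear part $a_{0} + a_{1}x + a_{2}y$ and their $xy$-coefficients are $a_{12}$, $a_{12} + a_{3}$, $a_{12} + a_{4}$ respectively; hence $\Lambda$ is an $\mathbf{R}$-linear bijection whose inverse reads off $(a_{0}, a_{1}, a_{2}, a_{12})$ from $W_{l_{1}}(a)$ and recovers $a_{3}, a_{4}$ as differences of $xy$-coefficients. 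Finally one notes by construction that $\Lambda$ commutes with all the structural projections to the $A_{i}$ and $B_{jk}$, so it identifies $W_{E}$ with the limit cone.

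The main obstacle is not conceptual but bookkeeping: one must carefully distinguish the three syntactically identical copies of $W_{D^{2}}$ and $W_{D(2)}$, and must keep in mind that $D \mapsto W_{D}$ is contravariant, so that the putative maps $l_{i}: D^{2} \to E$ and $i_{D(2)}^{D^{2}}: D(2) \to D^{2}$ produce algebra maps in the reverse direction --- which is precisely what legitimizes $W_{E}$ as the apex of a limit cone rather than the tip of a colimit. Once those conventions are pinned down, no property of microlinearity or of Fr\"{o}licher structure is needed at this purely Weil-algebraic stage; the verification is the short linear-algebra check above, and the corresponding statement in $\mathbf{FS}$ (Theorem \ref{t5.3}) follows by applying $M \otimes \cdot$ and invoking the microlinearity of $M$.
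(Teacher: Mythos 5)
Your proposal is correct and follows essentially the same route as the paper: both arguments reduce the hexagonal limit to the triple fibred product over $W_{D(2)}$, write elements of $W_{D^{2}}$ and of the six-dimensional algebra $W_{E}$ in explicit polynomial coordinates, and verify by direct coefficient comparison that the three $xy$-coefficients can be matched uniquely (the paper exhibits the explicit inverse formula $\gamma = a + a_{1}X_{1} + a_{2}X_{2} + a_{12}X_{1}X_{2} + (b_{12}-a_{12})X_{3} + (c_{12}-a_{12})X_{4}$, which is exactly your recovery of $a_{3}, a_{4}$ as differences of $xy$-coefficients). The only cosmetic difference is that you package the verification as bijectivity of the induced map plus a dimension count, whereas the paper solves directly for the unique preimage.
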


\begin{proof}
Let $\gamma_{1},\gamma_{2},\gamma_{3}\in W_{D^{2}}$ and $\gamma\in W_{E}$ so
that they are the polynomials with real coefficients in the following form:
\begin{align*}
\gamma_{1}(X_{1},X_{2})  &  =a+a_{1}X_{1}+a_{2}X_{2}+a_{12}X_{1}X_{2}\\
\gamma_{2}(X_{1},X_{2})  &  =b+b_{1}X_{1}+b_{2}X_{2}+b_{12}X_{1}X_{2}\\
\gamma_{3}(X_{1},X_{2})  &  =c+c_{1}X_{1}+c_{2}X_{2}+c_{12}X_{1}X_{2}\\
\gamma(X_{1},X_{2},X_{3},X_{4})  &  =e+e_{1}X_{1}+e_{2}X_{2}+e_{12}X_{1}%
X_{2}+e_{3}X_{3}+e_{4}X_{4}%
\end{align*}
The condition that $W_{i_{D(2)}^{D^{2}}}(\gamma_{1})=W_{i_{D(2)}^{D^{2}}%
}(\gamma_{2})=W_{i_{D(2)}^{D^{2}}}(\gamma_{3})$ is equivalent to the following
three conditions as a whole:
\begin{align*}
a  &  =b=c\\
a_{1}  &  =b_{1}=c_{1}\\
a_{2}  &  =b_{2}=c_{2}%
\end{align*}
Therefore, in order that $W_{l_{1}}(\gamma)=\gamma_{1}$, $W_{l_{2}}%
(\gamma)=\gamma_{2}$ and $W_{l_{3}}(\gamma)=\gamma_{3}$ in this case, it is
necessary and sufficient that the polynomial $\gamma$ should be of the
following form:
\[
\gamma(X_{1},X_{2},X_{3},X_{4})=a+a_{1}X_{1}+a_{2}X_{2}+a_{12}X_{1}%
X_{2}+(b_{12}-a_{12})X_{3}+(c_{12}-a_{12})X_{4}%
\]
This completes the proof.
\end{proof}

\begin{corollary}
Given $\gamma_{1},\gamma_{2},\gamma_{3}\in M\otimes W_{D^{2}}$ with
$W_{i_{D(2)}^{D^{2}}}(\gamma_{1})=W_{i_{D(2)}^{D^{2}}}(\gamma_{2}%
)=W_{i_{D(2)}^{D^{2}}}(\gamma_{3})$, there exists a unique $\gamma\in M\otimes
W_{E}$, usally denoted by $h_{(\gamma_{1},\gamma_{2},\gamma_{3})} $, such that
$\gamma_{i}=(\mathrm{id}_{M}\otimes W_{l_{i}})(\gamma)$ $(i=1,2,3)$.
\end{corollary}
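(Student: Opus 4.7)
The claim is essentially a restatement of Theorem \ref{t5.3} (the limit property) in the language of generalized elements, so no substantive new work is required.

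The plan is to regard each $\gamma_i$ as a morphism from the terminal Fr\"olicher space $1$ into the copy of $M\otimes W_{D^{2}}$ sitting at the top (respectively left, right) position of the diagram in Theorem \ref{t5.3}. A triple $(\gamma_{1},\gamma_{2},\gamma_{3})$ determines a cone over the outer subdiagram precisely when the three pairwise composites into the three copies of $M\otimes W_{D(2)}$ agree. Since every outgoing arrow from a $W_{D^{2}}$-vertex is labeled by the same morphism $\mathrm{id}_{M}\otimes W_{i_{D(2)}^{D^{2}}}$, these pairwise compatibility conditions collapse to the single assumption
\[
(\mathrm{id}_{M}\otimes W_{i_{D(2)}^{D^{2}}})(\gamma_{1})=(\mathrm{id}_{M}\otimes W_{i_{D(2)}^{D^{2}}})(\gamma_{2})=(\mathrm{id}_{M}\otimes W_{i_{D(2)}^{D^{2}}})(\gamma_{3})
\]
included in the hypothesis. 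Verifying this reduction is the only bookkeeping step, and it is immediate.

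By Theorem \ref{t5.3}, $M\otimes W_{E}$ is the limit of this outer diagram, so its universal property furnishes a unique mediating morphism $\gamma:1\rightarrow M\otimes W_{E}$, equivalently a unique element $\gamma\in M\otimes W_{E}$, such that $(\mathrm{id}_{M}\otimes W_{l_{i}})(\gamma)=\gamma_{i}$ for each $i=1,2,3$. Setting $h_{(\gamma_{1},\gamma_{2},\gamma_{3})}:=\gamma$ completes the argument. There is no real obstacle here: the substantive content lies in Lemma \ref{t5.4} (the explicit computation of the limit of Weil algebras via polynomial representatives) and in its transfer to $\mathbf{FS}$ through the microlinearity of $M$ in Theorem \ref{t5.3}; all that is left is to recognize the corollary as the existence-and-uniqueness clause of that universal property.
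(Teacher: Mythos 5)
Your proposal is correct and matches the paper's (implicit) reasoning: the paper states this corollary without proof precisely because it is the element-level reading of the universal property of the limit established in Theorem \ref{t5.3}, whose substantive content indeed resides in Lemma \ref{t5.4} and the microlinearity of $M$. Your observation that the cone-compatibility conditions collapse to the single hypothesis on the $W_{i_{D(2)}^{D^{2}}}$-images is exactly the bookkeeping the paper leaves to the reader.
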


\begin{remark}
Thus $h_{(\gamma_{1},\gamma_{2},\gamma_{3})}$ encodes $\gamma_{1}$,
$\gamma_{2}$ and $\gamma_{3}$, which are in turn recovered from $h_{(\gamma
_{1},\gamma_{2},\gamma_{3})}$ via $\mathrm{id}_{M}\otimes W_{l_{i}}$
$(i=1,2,3)$.
\end{remark}

\begin{proof}
(of the primordial Jacobi identity). Let $t_{i}$ $(i=1,2,3)$ be the three
expression in Theorem \ref{t5.2} in order. It is easy to see that
\begin{align*}
g_{(\gamma_{1},\gamma_{2})}  &  =(\mathrm{id}_{M}\otimes W_{(d_{1},d_{2}%
,d_{3})\in D^{3}\{(1,3),(2,3)\}\mapsto(d_{1},d_{2},d_{3},0)\in E}%
)(h_{(\gamma_{1},\gamma_{2},\gamma_{3})})\\
g_{(\gamma_{2},\gamma_{3})}  &  =(\mathrm{id}_{M}\otimes W_{(d_{1},d_{2}%
,d_{3})\in D^{3}\{(1,3),(2,3)\}\mapsto(d_{1},d_{2},d_{1}d_{2}-d_{3},d_{3})\in
E})(h_{(\gamma_{1},\gamma_{2},\gamma_{3})})\\
g_{(\gamma_{3},\gamma_{1})}  &  =(\mathrm{id}_{M}\otimes W_{(d_{1},d_{2}%
,d_{3})\in D^{3}\{(1,3),(2,3)\}\mapsto(d_{1},d_{2},0,d_{1}d_{2}-d_{3})\in
E})(h_{(\gamma_{1},\gamma_{2},\gamma_{3})})
\end{align*}
Therefore we have
\begin{align*}
&  t_{1}\\
&  =(\mathrm{id}_{M}\otimes W_{d\in D\mapsto(0,0,d)\in D^{3}\{(1,3),(2,3)\}}%
)(g_{(\gamma_{1},\gamma_{2})})\\
&  =(\mathrm{id}_{M}\otimes W_{d\in D\mapsto(0,0,d)\in D^{3}\{(1,3),(2,3)\}}%
)\circ(\mathrm{id}_{M}\otimes W_{(d_{1},d_{2},d_{3})\in D^{3}%
\{(1,3),(2,3)\}\mapsto(d_{1},d_{2},d_{3},0)\in E})\\
&  (h_{(\gamma_{1},\gamma_{2},\gamma_{3})})\\
&  =(\mathrm{id}_{M}\otimes W_{d\in D\mapsto(0,0,d,0)\in E})(h_{(\gamma
_{1},\gamma_{2},\gamma_{3})})
\end{align*}
\begin{align*}
&  t_{2}\\
&  =(\mathrm{id}_{M}\otimes W_{d\in D\mapsto(0,0,d)\in D^{3}\{(1,3),(2,3)\}}%
)(g_{(\gamma_{2},\gamma_{3})})\\
&  =(\mathrm{id}_{M}\otimes W_{d\in D\mapsto(0,0,d)\in D^{3}\{(1,3),(2,3)\}}%
)\circ(\mathrm{id}_{M}\otimes W_{(d_{1},d_{2},d_{3})\in D^{3}%
\{(1,3),(2,3)\}\mapsto(d_{1},d_{2},d_{1}d_{2}-d_{3},d_{3})\in E})\\
&  (h_{(\gamma_{1},\gamma_{2},\gamma_{3})})\\
&  =(\mathrm{id}_{M}\otimes W_{d\in D\mapsto(0,0,-d,d)\in E})(h_{(\gamma
_{1},\gamma_{2},\gamma_{3})})
\end{align*}
\begin{align*}
&  t_{3}\\
&  =(\mathrm{id}_{M}\otimes W_{d\in D\mapsto(0,0,d)\in D^{3}\{(1,3),(2,3)\}}%
)(g_{(\gamma_{2},\gamma_{3})})\\
&  =(\mathrm{id}_{M}\otimes W_{d\in D\mapsto(0,0,d)\in D^{3}\{(1,3),(2,3)\}}%
)\circ(\mathrm{id}_{M}\otimes W_{(d_{1},d_{2},d_{3})\in D^{3}%
\{(1,3),(2,3)\}\mapsto(d_{1},d_{2},0,d_{1}d_{2}-d_{3})\in E})\\
&  (h_{(\gamma_{1},\gamma_{2},\gamma_{3})})\\
&  =(\mathrm{id}_{M}\otimes W_{d\in D\mapsto(0,0,0,-d)\in E})(h_{(\gamma
_{1},\gamma_{2},\gamma_{3})})
\end{align*}
Thus we have
\[
l_{(t_{1},t_{2},t_{3})}=(\mathrm{id}_{M}\otimes W_{(d_{1},d_{2},d_{3})\in
D(3)\mapsto(0,0,d_{1}-d_{2},d_{2}-d_{3})\in E})(h_{(\gamma_{1},\gamma
_{2},\gamma_{3})})
\]
This means that
\begin{align*}
&  t_{1}+t_{2}+t_{3}\\
&  =(\mathrm{id}_{M}\otimes W_{d\in D\mapsto(d,d,d)\in D(3)})(l_{(t_{1}%
,t_{2},t_{3})})\\
&  =(\mathrm{id}_{M}\otimes W_{d\in D\mapsto(d,d,d)\in D(3)})\circ
(\mathrm{id}_{M}\otimes W_{(d_{1},d_{2},d_{3})\in D(3)\mapsto(0,0,d_{1}%
-d_{2},d_{2}-d_{3})\in E})(h_{(\gamma_{1},\gamma_{2},\gamma_{3})})\\
&  =(\mathrm{id}_{M}\otimes W_{d\in D\mapsto(0,0,d-d,d-d)\in E})(h_{(\gamma
_{1},\gamma_{2},\gamma_{3})})\\
&  =(\mathrm{id}_{M}\otimes W_{d\in D\mapsto(0,0,0,0)\in E})(h_{(\gamma
_{1},\gamma_{2},\gamma_{3})})
\end{align*}
Thus the proof of the primordial Jacobi identity is complete.
\end{proof}

\begin{proposition}
\label{t5.5}The diagram
\[%
\begin{array}
[c]{ccccc}
&  & \mathrm{id}_{M}\otimes W_{\varphi_{1}^{3}} &  & \\
& M\otimes W_{D^{4}\{(2,4),(3,4)\}} & \rightarrow & M\otimes W_{D^{3}} & \\
\mathrm{id}_{M}\otimes W_{\psi_{1}^{3}} & \downarrow &  & \downarrow &
\mathrm{id}_{M}\otimes W_{i_{D^{3}\{(2,3)\}}^{D^{3}}}\\
& M\otimes W_{D^{3}} & \rightarrow & M\otimes W_{D^{3}\{(2,3)\}} & \\
&  & \mathrm{id}_{M}\otimes W_{i_{D^{3}\{(2,3)\}}^{D^{3}}} &  &
\end{array}
\]
is a pullback diagram, where the putative mapping $\varphi_{1}^{3}%
:D^{3}\rightarrow D^{4}\{(2,4),(3,4)\}$ is
\[
(d_{1},d_{2},d_{3})\in D^{3}\mapsto(d_{1},d_{2},d_{3},0)\in D^{4}%
\{(2,4),(3,4)\}
\]
while the putative mapping $\psi_{1}^{3}:D^{3}\rightarrow D^{4}%
\{(2,4),(3,4)\}$ is
\[
(d_{1},d_{2},d_{3})\in D^{3}\mapsto(d_{1},d_{2},d_{3},d_{2}d_{3})\in
D^{4}\{(2,4),(3,4)\}
\]

\end{proposition}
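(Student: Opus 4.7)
The plan is to follow exactly the pattern of Proposition~\ref{t5.1}: it suffices to exhibit the corresponding diagram of Weil algebras
\[
\begin{array}{ccc}
W_{D^4\{(2,4),(3,4)\}} & \stackrel{W_{\varphi_1^3}}{\rightarrow} & W_{D^3}\\
W_{\psi_1^3}\downarrow & & \downarrow W_{i_{D^3\{(2,3)\}}^{D^3}}\\
W_{D^3} & \stackrel{W_{i_{D^3\{(2,3)\}}^{D^3}}}{\rightarrow} & W_{D^3\{(2,3)\}}
\end{array}
\]
as a pullback in $\mathbf{W}$; the microlinearity of $M$ then transports it to a pullback in $\mathbf{FS}$ exactly as in Proposition~\ref{t5.1}.

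For the Weil-algebra pullback I would argue by matching monomial coefficients, in the spirit of Lemma~\ref{t5.4}. The relevant monomial bases are: $W_{D^3}$ has the eight monomials $1,X_1,X_2,X_3,X_1X_2,X_1X_3,X_2X_3,X_1X_2X_3$; the quotient $W_{D^3\{(2,3)\}}$ kills $X_2X_3$ and hence also $X_1X_2X_3$; and $W_{D^4\{(2,4),(3,4)\}}$, in which $X_4$ annihilates both $X_2$ and $X_3$, has the ten monomials $1,X_1,X_2,X_3,X_4,X_1X_2,X_1X_3,X_1X_4,X_2X_3,X_1X_2X_3$. In this description, $W_{\varphi_1^3}$ is the substitution $X_4\mapsto 0$, $W_{\psi_1^3}$ is the substitution $X_4\mapsto X_2X_3$, and $W_{i_{D^3\{(2,3)\}}^{D^3}}$ is the canonical quotient map.

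Given compatible $P,Q\in W_{D^3}$ (meaning that $P$ and $Q$ have equal coefficients on every monomial except possibly $X_2X_3$ and $X_1X_2X_3$), I would construct the unique $R\in W_{D^4\{(2,4),(3,4)\}}$ with $W_{\varphi_1^3}(R)=P$ and $W_{\psi_1^3}(R)=Q$ as follows. The equation $W_{\varphi_1^3}(R)=P$ pins down the eight coefficients of $R$ outside $X_4$ and $X_1X_4$, and the equation $W_{\psi_1^3}(R)=Q$ then determines the remaining two coefficients as the $X_2X_3$- and $X_1X_2X_3$-coefficients of $Q-P$. Existence and uniqueness are immediate, and the pullback property follows. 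The only real obstacle is the bookkeeping of surviving monomials in the simplicial objects, which is routine.
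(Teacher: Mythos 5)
Your proof is correct and follows exactly the paper's route: reduce to the corresponding pullback diagram of Weil algebras and transport it along the microlinearity of $M$. The paper merely asserts that Weil-algebra diagram is a pullback, so your coefficient-matching verification (which checks out: the two extra monomials $X_4$ and $X_1X_4$ of $W_{D^4\{(2,4),(3,4)\}}$ exactly account for the freedom in the $X_2X_3$- and $X_1X_2X_3$-coefficients killed by the quotient $W_{D^3}\rightarrow W_{D^3\{(2,3)\}}$) supplies detail the paper omits, in the same spirit as its Lemma \ref{t5.4}.
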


\begin{proof}
This follows from the microlinearity of $M$ and the pullback diagram of Weil
algebras
\[%
\begin{array}
[c]{ccccc}
&  & W_{\varphi_{1}^{3}} &  & \\
& W_{D^{4}\{(2,4),(3,4)\}} & \rightarrow & W_{D^{3}} & \\
W_{\psi_{1}^{3}} & \downarrow &  & \downarrow & W_{i_{D^{3}\{(2,3)\}}^{D^{3}}%
}\\
& W_{D^{3}} & \rightarrow & W_{D^{3}\{(2,3)\}} & \\
&  & W_{i_{D^{3}\{(2,3)\}}^{D^{3}}} &  &
\end{array}
\]

\end{proof}

\begin{corollary}
For any $\gamma_{1},\gamma_{2}\in M\otimes W_{D^{3}}$, if $\left(
\mathrm{id}_{M}\otimes W_{i_{D^{3}\{(2,3)\}}^{D^{3}}}\right)  (\gamma
_{1})=\left(  \mathrm{id}_{M}\otimes W_{i_{D^{3}\{(2,3)\}}^{D^{3}}}\right)
(\gamma_{2})$, then there exists unique $\gamma\in M\otimes W_{D^{4}%
\{(2,4),(3,4)\}}$ with $(\mathrm{id}_{M}\otimes W_{\varphi_{1}^{3}}%
)(\gamma)=\gamma_{1}$ and $(\mathrm{id}_{M}\otimes W_{\psi_{1}^{3}}%
)(\gamma)=\gamma_{2}$.
\end{corollary}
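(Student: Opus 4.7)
The plan is to derive this corollary directly from the pullback square of Proposition \ref{t5.5}, treating it as nothing more than an unpacking of the universal property. Elements of a Fr\"olicher space $X$ are in bijection with morphisms $1\to X$ from the terminal object of $\mathbf{FS}$, so a pair of elements of $M\otimes W_{D^{3}}$ is the same data as a morphism $1\to (M\otimes W_{D^{3}})\times (M\otimes W_{D^{3}})$, and the pullback property is applied in this pointed form.

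First I would observe that the hypothesis
\[
\bigl(\mathrm{id}_{M}\otimes W_{i_{D^{3}\{(2,3)\}}^{D^{3}}}\bigr)(\gamma_{1})=\bigl(\mathrm{id}_{M}\otimes W_{i_{D^{3}\{(2,3)\}}^{D^{3}}}\bigr)(\gamma_{2})
\]
says exactly that the pair $(\gamma_{1},\gamma_{2})$ equalises the two legs of the cospan
\[
M\otimes W_{D^{3}}\;\xrightarrow{\mathrm{id}_{M}\otimes W_{i_{D^{3}\{(2,3)\}}^{D^{3}}}}\;M\otimes W_{D^{3}\{(2,3)\}}\;\xleftarrow{\mathrm{id}_{M}\otimes W_{i_{D^{3}\{(2,3)\}}^{D^{3}}}}\;M\otimes W_{D^{3}},
\]
so $(\gamma_{1},\gamma_{2})$ defines a cone from $1$ over that cospan. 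Next I would invoke the pullback established in Proposition \ref{t5.5}: by its universal property, there is a unique morphism $1\to M\otimes W_{D^{4}\{(2,4),(3,4)\}}$ — equivalently, a unique element $\gamma\in M\otimes W_{D^{4}\{(2,4),(3,4)\}}$ — whose compositions with $\mathrm{id}_{M}\otimes W_{\varphi_{1}^{3}}$ and $\mathrm{id}_{M}\otimes W_{\psi_{1}^{3}}$ recover $\gamma_{1}$ and $\gamma_{2}$ respectively.

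There is no genuine obstacle: once Proposition \ref{t5.5} is in hand, the corollary is purely formal. The only minor point worth flagging is the passage from the pullback diagram, phrased as a limit in $\mathbf{FS}$, to the corresponding statement about elements; this is immediate because $\mathbf{FS}$ has a terminal object and $\mathrm{Hom}(1,-)$ preserves limits, so a pullback in $\mathbf{FS}$ is in particular a pullback of underlying sets.
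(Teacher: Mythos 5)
Your argument is correct and is precisely the (implicit) justification the paper intends: the corollary is stated without proof as an immediate unpacking of the universal property of the pullback in Proposition \ref{t5.5}, applied to the cone determined by the pair $(\gamma_{1},\gamma_{2})$. Your remark that elements correspond to morphisms from the terminal object and that $\mathrm{Hom}(1,-)$ preserves limits correctly fills in the only step the paper leaves tacit.
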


\begin{remark}
Thus $\gamma$ encodes $\gamma_{1}$ and $\gamma_{2}$, which are in turn
recovered from $\gamma$ via $\mathrm{id}_{M}\otimes W_{\varphi_{1}^{3}}$ and
$\mathrm{id}_{M}\otimes W_{\psi_{1}^{3}}$ respectively.
\end{remark}

\begin{notation}
We will write $g_{(\gamma_{1},\gamma_{2})}^{1}$ for $\gamma$ in the above corollary.
\end{notation}

\begin{definition}
The (\textit{first)} \textit{strong difference} $\gamma_{2}\underset
{1}{\overset{\cdot}{-}}\gamma_{1}\in M\otimes W_{D^{2}}$ is defined to be
\[
(\mathrm{id}_{M}\otimes W_{(d_{1},d_{2})\in D^{2}\mapsto(d_{1},0,0,d_{2})\in
D^{4}\{(2,4),(3,4)\}})(g_{(\gamma_{1},\gamma_{2})}^{1})
\]

\end{definition}

\begin{proposition}
\label{t5.6}The diagram
\[%
\begin{array}
[c]{ccccc}
&  & \mathrm{id}_{M}\otimes W_{\varphi_{2}^{3}} &  & \\
& M\otimes W_{D^{4}\{(1,4),(3,4)\}} & \rightarrow & M\otimes W_{D^{3}} & \\
\mathrm{id}_{M}\otimes W_{\psi_{2}^{3}} & \downarrow &  & \downarrow &
\mathrm{id}_{M}\otimes W_{i_{D^{3}\{(1,3)\}}^{D^{3}}}\\
& M\otimes W_{D^{3}} & \rightarrow & M\otimes W_{D^{3}\{(1,3)\}} & \\
&  & \mathrm{id}_{M}\otimes W_{i_{D^{3}\{(1,3)\}}^{D^{3}}} &  &
\end{array}
\]
is a pullback diagram, where the putative mapping $\varphi_{2}^{3}%
:D^{3}\rightarrow D^{4}\{(1,4),(3,4)\}$ is
\[
(d_{1},d_{2},d_{3})\in D^{3}\mapsto(d_{1},d_{2},d_{3},0)\in D^{4}%
\{(1,4),(3,4)\}
\]
while the putative mapping $\psi_{2}^{3}:D^{3}\rightarrow D^{4}%
\{(1,4),(3,4)\}$ is
\[
(d_{1},d_{2},d_{3})\in D^{3}\mapsto(d_{1},d_{2},d_{3},d_{1}d_{3})\in
D^{4}\{(1,4),(3,4)\}
\]

\end{proposition}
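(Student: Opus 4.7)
The plan is to follow the proof of Proposition \ref{t5.5} verbatim, after transposing the roles of the indices $1$ and $2$. Microlinearity of $M$ reduces the assertion to showing that the corresponding square of Weil algebras
\[
\begin{array}[c]{ccccc}
& & W_{\varphi_{2}^{3}} & & \\
& W_{D^{4}\{(1,4),(3,4)\}} & \rightarrow & W_{D^{3}} & \\
W_{\psi_{2}^{3}} & \downarrow & & \downarrow & W_{i_{D^{3}\{(1,3)\}}^{D^{3}}}\\
& W_{D^{3}} & \rightarrow & W_{D^{3}\{(1,3)\}} & \\
& & W_{i_{D^{3}\{(1,3)\}}^{D^{3}}} & &
\end{array}
\]
is a pullback in $\mathbf{W}$; applying the functor $M\otimes -$, which by microlinearity preserves finite limits of Weil algebras, then transports this pullback to $\mathbf{FS}$ and completes the argument.

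To check that the Weil-algebra square is itself a pullback I would carry out the same kind of routine polynomial bookkeeping that underlies Lemma \ref{t5.4}. A general element of $W_{D^{4}\{(1,4),(3,4)\}}$ is a polynomial in $X_{1},\ldots,X_{4}$ modulo the ideal generated by $X_{i}^{2}$, $X_{1}X_{4}$ and $X_{3}X_{4}$; above the monomial basis of $W_{D^{3}}$ there are exactly two additional basis elements, namely $X_{4}$ and $X_{2}X_{4}$ (the potential products $X_{1}X_{3}X_{4}$, $X_{1}X_{2}X_{4}$ and $X_{2}X_{3}X_{4}$ all vanish). The map $W_{\varphi_{2}^{3}}$ sends $X_{4}\mapsto 0$ while $W_{\psi_{2}^{3}}$ sends $X_{4}\mapsto X_{1}X_{3}$, both fixing $X_{1},X_{2},X_{3}$. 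Given a compatible pair $\gamma_{1},\gamma_{2}\in W_{D^{3}}$ (i.e.\ one agreeing modulo $X_{1}X_{3}$), the equation $W_{\varphi_{2}^{3}}(\gamma)=\gamma_{1}$ pins down every coefficient of the sought-after $\gamma$ except those of $X_{4}$ and $X_{2}X_{4}$, and $W_{\psi_{2}^{3}}(\gamma)=\gamma_{2}$ then forces these two remaining coefficients to be the $X_{1}X_{3}$- and $X_{1}X_{2}X_{3}$-discrepancies between $\gamma_{2}$ and $\gamma_{1}$. Both existence and uniqueness of $\gamma$ thus follow.

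I do not anticipate any real obstacle. The only point requiring attention is the enumeration of surviving monomials in the quotient ring $W_{D^{4}\{(1,4),(3,4)\}}$; once one has verified that the two extra basis vectors beyond $W_{D^{3}}$ are precisely $X_{4}$ and $X_{2}X_{4}$, the pullback property is forced by a straightforward matching of coefficients. The entire argument is formally identical to that of Proposition \ref{t5.5} upon swapping the indices $1$ and $2$ throughout, and no genuinely new idea is required.
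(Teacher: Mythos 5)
Your proposal is correct and follows the paper's own route: the paper likewise reduces the statement, via microlinearity of $M$, to the assertion that the corresponding square of Weil algebras is a pullback (which it states without further verification). Your explicit monomial count in $W_{D^{4}\{(1,4),(3,4)\}}$ — the two basis elements $X_{4}$ and $X_{2}X_{4}$ beyond those of $W_{D^{3}}$, mapped to $0$ by $W_{\varphi_{2}^{3}}$ and to $X_{1}X_{3}$, $X_{1}X_{2}X_{3}$ by $W_{\psi_{2}^{3}}$ — correctly supplies the coefficient-matching argument that the paper leaves implicit, in the same style as Lemma \ref{t5.4}.
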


\begin{proof}
This follows from the microlinearity of $M$ and the pullback diagram of Weil
algebras
\[%
\begin{array}
[c]{ccccc}
&  & W_{\varphi_{2}^{3}} &  & \\
& W_{D^{4}\{(1,4),(3,4)\}} & \rightarrow & W_{D^{3}} & \\
W_{\psi_{2}^{3}} & \downarrow &  & \downarrow & W_{i_{D^{3}\{(1,3)\}}^{D^{3}}%
}\\
& W_{D^{3}} & \rightarrow & W_{D^{3}\{(1,3)\}} & \\
&  & W_{i_{D^{3}\{(1,3)\}}^{D^{3}}} &  &
\end{array}
\]

\end{proof}

\begin{corollary}
For any $\gamma_{1},\gamma_{2}\in M\otimes W_{D^{3}}$, if $\left(
\mathrm{id}_{M}\otimes W_{i_{D^{3}\{(1,3)\}}^{D^{3}}}\right)  (\gamma
_{1})=\left(  \mathrm{id}_{M}\otimes W_{i_{D^{3}\{(1,3)\}}^{D^{3}}}\right)
(\gamma_{2})$, then there exists unique $\gamma\in M\otimes W_{D^{4}%
\{(1,4),(3,4)\}}$ with $(\mathrm{id}_{M}\otimes W_{\varphi_{2}^{3}}%
)(\gamma)=\gamma_{1}$ and $(\mathrm{id}_{M}\otimes W_{\psi_{2}^{3}}%
)(\gamma)=\gamma_{2}$.
\end{corollary}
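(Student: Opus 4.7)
The plan is to read off the corollary as a direct instance of the universal property of the pullback square established in Proposition~\ref{t5.6}. Indeed, that proposition asserts that the square with vertices $M\otimes W_{D^{4}\{(1,4),(3,4)\}}$, $M\otimes W_{D^{3}}$ (both copies), and $M\otimes W_{D^{3}\{(1,3)\}}$, with edges $\mathrm{id}_{M}\otimes W_{\varphi_{2}^{3}}$, $\mathrm{id}_{M}\otimes W_{\psi_{2}^{3}}$, and the two copies of $\mathrm{id}_{M}\otimes W_{i_{D^{3}\{(1,3)\}}^{D^{3}}}$, is a pullback in $\mathbf{FS}$.

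The hypothesis of the corollary says precisely that the pair $(\gamma_{1},\gamma_{2})\in (M\otimes W_{D^{3}})\times(M\otimes W_{D^{3}})$ is a compatible cone over the cospan
\[
M\otimes W_{D^{3}}\;\xrightarrow{\;\mathrm{id}_{M}\otimes W_{i_{D^{3}\{(1,3)\}}^{D^{3}}}\;}\;M\otimes W_{D^{3}\{(1,3)\}}\;\xleftarrow{\;\mathrm{id}_{M}\otimes W_{i_{D^{3}\{(1,3)\}}^{D^{3}}}\;}\;M\otimes W_{D^{3}}.
\]
By the universal property of the pullback, there is therefore a unique $\gamma\in M\otimes W_{D^{4}\{(1,4),(3,4)\}}$ whose image under $\mathrm{id}_{M}\otimes W_{\varphi_{2}^{3}}$ is $\gamma_{1}$ and whose image under $\mathrm{id}_{M}\otimes W_{\psi_{2}^{3}}$ is $\gamma_{2}$. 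This is exactly the conclusion sought.

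Since Proposition~\ref{t5.6} has already been proved by pulling back along a pullback of Weil algebras and invoking microlinearity of $M$, there is no genuine obstacle here; the corollary is simply the elementwise translation of ``pullback square'' into the existence-and-uniqueness statement for filling in a compatible pair. The only thing one must take minimal care with is to observe that membership in $M\otimes W_{D^{3}}$ and $M\otimes W_{D^{4}\{(1,4),(3,4)\}}$ can be treated pointwise in $\mathbf{FS}$, which is immediate from the fact that limits in $\mathbf{FS}$ are computed on underlying sets.
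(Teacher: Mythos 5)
Your proposal is correct and is precisely the argument the paper intends: the corollary is stated without proof as the immediate elementwise translation of the pullback square of Proposition~\ref{t5.6}, obtained by applying its universal property to the compatible pair $(\gamma_{1},\gamma_{2})$. Your added remark that elements can be treated pointwise because limits in $\mathbf{FS}$ are computed on underlying sets is a harmless and accurate clarification of why the arrow-theoretic universal property yields the element-level existence-and-uniqueness claim.
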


\begin{remark}
Thus $\gamma$ encodes $\gamma_{1}$ and $\gamma_{2}$, which are in turn
recovered from $\gamma$ via $\mathrm{id}_{M}\otimes W_{\varphi_{2}^{3}}$ and
$\mathrm{id}_{M}\otimes W_{\psi_{2}^{3}}$.
\end{remark}

\begin{notation}
We will write $g_{(\gamma_{1},\gamma_{2})}^{2}$ for $\gamma$ in the above corollary.
\end{notation}

\begin{definition}
The\textit{\ (second) strong difference} $\gamma_{2}\underset{2}%
{\overset{\cdot}{-}}\gamma_{1}\in M\otimes W_{D^{2}}$ is defined to be
\[
(\mathrm{id}_{M}\otimes W_{(d_{1},d_{2})\in D^{2}\mapsto(0,d_{1},0,d_{2})\in
D^{4}\{(1,4),(3,4)\}})(g_{(\gamma_{1},\gamma_{2})}^{2})
\]

\end{definition}

\begin{proposition}
\label{t5.7}The diagram
\[%
\begin{array}
[c]{ccccc}
&  & \mathrm{id}_{M}\otimes W_{\varphi_{3}^{3}} &  & \\
& M\otimes W_{D^{4}\{(1,4),(2,4)\}} & \rightarrow & M\otimes W_{D^{3}} & \\
\mathrm{id}_{M}\otimes W_{\psi_{3}^{3}} & \downarrow &  & \downarrow &
\mathrm{id}_{M}\otimes W_{i_{D^{3}\{(1,2)\}}^{D^{3}}}\\
& M\otimes W_{D^{3}} & \rightarrow & M\otimes W_{D^{3}\{(1,2)\}} & \\
&  & \mathrm{id}_{M}\otimes W_{i_{D^{3}\{(1,2)\}}^{D^{3}}} &  &
\end{array}
\]
is a pullback diagram, where the putative mapping $\varphi_{3}^{3}%
:D^{3}\rightarrow D^{4}\{(1,4),(2,4)\}$ is
\[
(d_{1},d_{2},d_{3})\in D^{3}\mapsto(d_{1},d_{2},d_{3},0)\in D^{4}%
\{(1,4),(2,4)\}
\]
while the putative mapping $\psi_{3}^{3}:D^{3}\rightarrow D^{4}%
\{(1,4),(2,4)\}$ is
\[
(d_{1},d_{2},d_{3})\in D^{3}\mapsto(d_{1},d_{2},d_{3},d_{1}d_{2})\in
D^{4}\{(1,4),(2,4)\}
\]

\end{proposition}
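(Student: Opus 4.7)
The approach is to mirror the proofs of the two preceding propositions (\ref{t5.5} and \ref{t5.6}) exactly; only the bookkeeping of indices changes. By the microlinearity of $M$, the functor $M\otimes\cdot$ carries finite limit diagrams in $\mathbf{W}$ to finite limit diagrams in $\mathbf{FS}$, so it suffices to produce the pullback diagram of Weil algebras
\[
\begin{array}[c]{ccccc}
& & W_{\varphi_{3}^{3}} & & \\
& W_{D^{4}\{(1,4),(2,4)\}} & \rightarrow & W_{D^{3}} & \\
W_{\psi_{3}^{3}} & \downarrow & & \downarrow & W_{i_{D^{3}\{(1,2)\}}^{D^{3}}}\\
& W_{D^{3}} & \rightarrow & W_{D^{3}\{(1,2)\}} & \\
& & W_{i_{D^{3}\{(1,2)\}}^{D^{3}}} & &
\end{array}
\]
and then apply $M\otimes\cdot$ to it. This is entirely parallel to the verifications used in the earlier two propositions, just with the distinguished pair of coordinates being $(1,2)$ rather than $(2,3)$ or $(1,3)$.

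To check the Weil-algebra pullback, I would argue at the level of representative polynomials, in the same spirit as the explicit computation of Lemma \ref{t5.4}. Write a general element of $W_{D^{3}}$ as
\[
\gamma(X_{1},X_{2},X_{3})=a+a_{1}X_{1}+a_{2}X_{2}+a_{3}X_{3}+a_{12}X_{1}X_{2}+a_{13}X_{1}X_{3}+a_{23}X_{2}X_{3}+a_{123}X_{1}X_{2}X_{3},
\]
and note that two such polynomials $\gamma_{1},\gamma_{2}$ become equal in $W_{D^{3}\{(1,2)\}}$ precisely when all their coefficients agree except possibly in the two slots $a_{12}$ and $a_{123}$. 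A general element of $W_{D^{4}\{(1,4),(2,4)\}}$, after discarding the monomials containing $X_{1}X_{4}$ or $X_{2}X_{4}$, has the shape
\[
e+e_{1}X_{1}+e_{2}X_{2}+e_{3}X_{3}+e_{4}X_{4}+e_{12}X_{1}X_{2}+e_{13}X_{1}X_{3}+e_{23}X_{2}X_{3}+e_{34}X_{3}X_{4}+e_{123}X_{1}X_{2}X_{3}.
\]
Under $W_{\varphi_{3}^{3}}$ the variable $X_{4}$ is set to $0$, killing the $e_{4}$ and $e_{34}$ coefficients, while under $W_{\psi_{3}^{3}}$ we substitute $X_{4}=X_{1}X_{2}$, sending the $e_{4}$ coefficient into $a_{12}$ and the $e_{34}$ coefficient into $a_{123}$. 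Given compatible $\gamma_{1},\gamma_{2}$, one therefore sets $e,e_{1},e_{2},e_{3},e_{12},e_{13},e_{23},e_{123}$ equal to the common coefficients of $\gamma_{1}$ (using $\gamma_{1}$ for $a_{12}$ and $a_{123}$) and recovers the missing $e_{4},e_{34}$ uniquely as the discrepancies $(a_{12})_{\gamma_{2}}-(a_{12})_{\gamma_{1}}$ and $(a_{123})_{\gamma_{2}}-(a_{123})_{\gamma_{1}}$.

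The main potential pitfall is the combinatorial one of correctly cataloguing which monomials survive in $W_{D^{4}\{(1,4),(2,4)\}}$: one must be careful that $X_{1}X_{2}X_{4}$, $X_{1}X_{3}X_{4}$, $X_{2}X_{3}X_{4}$, and $X_{1}X_{2}X_{3}X_{4}$ all vanish (since each contains either $X_{1}X_{4}$ or $X_{2}X_{4}$), while $X_{3}X_{4}$ and $X_{1}X_{2}X_{3}$ do survive. Once that inventory is correct, existence and uniqueness of the lifted element are immediate from linear algebra over $\mathbf{R}$, exactly as in Lemma \ref{t5.4}, and the proposition follows by applying $M\otimes\cdot$ together with microlinearity.
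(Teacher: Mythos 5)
Your proposal follows exactly the paper's route: reduce to the corresponding pullback diagram of Weil algebras and transport it through $M\otimes\cdot$ by microlinearity. The explicit polynomial verification you supply (which the paper omits for this proposition, though it is in the spirit of Lemma \ref{t5.4}) is correct — the surviving monomials of $W_{D^{4}\{(1,4),(2,4)\}}$ are catalogued accurately, and the coefficients $e_{4},e_{34}$ are indeed determined uniquely as the discrepancies in $a_{12}$ and $a_{123}$ — so the argument is sound and essentially identical to the paper's.
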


\begin{proof}
This follows from the microlinearity of $M$ and the pullback diagram of Weil
algebras
\[%
\begin{array}
[c]{ccccc}
&  & W_{\varphi_{3}^{3}} &  & \\
& W_{D^{4}\{(1,4),(2,4)\}} & \rightarrow & W_{D^{3}} & \\
W_{\psi_{3}^{3}} & \downarrow &  & \downarrow & W_{i_{D^{3}\{(1,2)\}}^{D^{3}}%
}\\
& W_{D^{3}} & \rightarrow & W_{D^{3}\{(1,2)\}} & \\
&  & W_{i_{D^{3}\{(1,2)\}}^{D^{3}}} &  &
\end{array}
\]

\end{proof}

\begin{corollary}
For any $\gamma_{1},\gamma_{2}\in M\otimes W_{D^{3}}$, if $\left(
\mathrm{id}_{M}\otimes W_{i_{D^{3}\{(1,2)\}}^{D^{3}}}\right)  (\gamma
_{1})=\left(  \mathrm{id}_{M}\otimes W_{i_{D^{3}\{(1,2)\}}^{D^{3}}}\right)
(\gamma_{2})$, then there exists unique $\gamma\in M\otimes W_{D^{4}%
\{(1,4),(2,4)\}}$ with $(\mathrm{id}_{M}\otimes W_{\varphi_{3}^{3}}%
)(\gamma)=\gamma_{1}$ and $(\mathrm{id}_{M}\otimes W_{\psi_{3}^{3}}%
)(\gamma)=\gamma_{2}$.
\end{corollary}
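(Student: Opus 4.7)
The plan is to read off this corollary as a direct instance of the universal property of the pullback square just verified in Proposition~\ref{t5.7}. The hypothesis $(\mathrm{id}_{M}\otimes W_{i_{D^{3}\{(1,2)\}}^{D^{3}}})(\gamma_{1})=(\mathrm{id}_{M}\otimes W_{i_{D^{3}\{(1,2)\}}^{D^{3}}})(\gamma_{2})$ is precisely the statement that the pair $(\gamma_{1},\gamma_{2})$, viewed as a morphism from the terminal object (in $\mathbf{FS}/M$, or just as an element) into the product $(M\otimes W_{D^{3}})\times(M\otimes W_{D^{3}})$, equalizes the two legs of the cospan appearing in the lower-right corner of the square of Proposition~\ref{t5.7}.

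Given that compatibility, the defining universal property of the pullback $M\otimes W_{D^{4}\{(1,4),(2,4)\}}$ supplies a unique $\gamma$ whose images under the two projections $\mathrm{id}_{M}\otimes W_{\varphi_{3}^{3}}$ and $\mathrm{id}_{M}\otimes W_{\psi_{3}^{3}}$ are $\gamma_{1}$ and $\gamma_{2}$ respectively. This is formally identical to the two preceding corollaries (following Propositions~\ref{t5.5} and \ref{t5.6}), only with the distinguished index pair $(i,j)$ of the simplicial object shifted from $(2,3)$ or $(1,3)$ to $(1,2)$.

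There is no genuine obstacle: the only point requiring attention is bookkeeping, namely verifying that $i_{D^{3}\{(1,2)\}}^{D^{3}}\circ\varphi_{3}^{3}$ and $i_{D^{3}\{(1,2)\}}^{D^{3}}\circ\psi_{3}^{3}$ really do coincide as maps $D^{3}\to D^{3}\{(1,2)\}$, so that the cospan side of the pullback is as advertised. This is a direct computation on putative mappings: both composites send $(d_{1},d_{2},d_{3})$ to itself read modulo the relation $d_{1}d_{2}=0$, since $\varphi_{3}^{3}$ simply appends a zero fourth coordinate while $\psi_{3}^{3}$ appends $d_{1}d_{2}$, which is killed by the projection $i_{D^{3}\{(1,2)\}}^{D^{3}}$. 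Hence the commutativity condition built into Proposition~\ref{t5.7} matches the hypothesis of the corollary, and the universal property delivers the required unique $\gamma$.
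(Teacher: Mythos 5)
Your proof is correct and is essentially the paper's own (implicit) argument: the corollary is the element-level universal property of the pullback established in Proposition \ref{t5.7}, and the paper accordingly states it with no further proof. The only quibble is a direction slip in your bookkeeping step: the composites that must agree are $\varphi_{3}^{3}\circ i_{D^{3}\{(1,2)\}}^{D^{3}}$ and $\psi_{3}^{3}\circ i_{D^{3}\{(1,2)\}}^{D^{3}}$ as putative mappings $D^{3}\{(1,2)\}\rightarrow D^{4}\{(1,4),(2,4)\}$ (equivalently $W_{i_{D^{3}\{(1,2)\}}^{D^{3}}}\circ W_{\varphi_{3}^{3}}$ and $W_{i_{D^{3}\{(1,2)\}}^{D^{3}}}\circ W_{\psi_{3}^{3}}$ at the Weil-algebra level), though the computation you give --- that appending $0$ and appending $d_{1}d_{2}$ agree once $d_{1}d_{2}=0$ --- is exactly the right verification.
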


\begin{remark}
Thus $\gamma$ encodes $\gamma_{1}$ and $\gamma_{2}$, which are in turn
recovered from $\gamma$ via $\mathrm{id}_{M}\otimes W_{\varphi_{3}^{3}}$ and
$\mathrm{id}_{M}\otimes W_{\psi_{3}^{3}}$.
\end{remark}

\begin{notation}
We will write $g_{(\gamma_{1},\gamma_{2})}^{3}$ for $\gamma$ in the above corollary.
\end{notation}

\begin{definition}
The (\textit{third) strong difference} $\gamma_{2}\underset{3}{\overset{\cdot
}{-}}\gamma_{1}\in M\otimes W_{D^{2}}$ is defined to be
\[
(\mathrm{id}_{M}\otimes W_{(d_{1},d_{2})\in D^{2}\mapsto(0,0,d_{1},d_{2})\in
D^{4}\{(1,4),(3,4)\}})(g_{(\gamma_{1},\gamma_{2})}^{3})
\]

\end{definition}

The general Jacobi identity discovered by Nishimura \cite{nishi-a} goes as follows:

\begin{theorem}
\label{t5.8}(The General Jacobi Identity) Let $\gamma_{123},\gamma
_{132},\gamma_{213},\gamma_{231},\gamma_{312},\gamma_{321}\in M\otimes
W_{D^{3}}$. As long as the following three expressions are well defined, they
sum up only to vanish:
\begin{align*}
&  (\gamma_{123}\overset{\cdot}{\underset{1}{-}}\gamma_{132})\overset{\cdot
}{-}(\gamma_{231}\overset{\cdot}{\underset{1}{-}}\gamma_{321})\\
&  (\gamma_{231}\overset{\cdot}{\underset{2}{-}}\gamma_{213})\overset{\cdot
}{-}(\gamma_{312}\overset{\cdot}{\underset{2}{-}}\gamma_{132})\\
&  (\gamma_{312}\overset{\cdot}{\underset{3}{-}}\gamma_{321})\overset{\cdot
}{-}(\gamma_{123}\overset{\cdot}{\underset{3}{-}}\gamma_{213})
\end{align*}

\end{theorem}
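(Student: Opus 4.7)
The plan is to mimic the strategy used for the primordial Jacobi identity (Theorem~\ref{t5.2}), but one level higher up. First I would establish an encoding lemma analogous to Lemma~\ref{t5.4} that identifies, in the category of Weil algebras, the limit of the diagram expressing the compatibility conditions that the six elements $\gamma_{123},\gamma_{132},\gamma_{213},\gamma_{231},\gamma_{312},\gamma_{321}\in M\otimes W_{D^3}$ must satisfy for the three displayed expressions to be well defined (namely the pairwise agreements modulo $d_2d_3$, $d_1d_3$, and $d_1d_2$ demanded by Propositions~\ref{t5.5}, \ref{t5.6}, \ref{t5.7}, together with the further agreement needed for the outer $\overset{\cdot}{-}$). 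This limit will be $W_F$ for a suitable simplicial infinitesimal object $F$ obtained from $D^3$ by adjoining extra infinitesimal coordinates that record the differences of the ``top'' coefficients of the six polynomials. Microlinearity of $M$ then promotes this pullback to a universal element $H\in M\otimes W_F$ from which each $\gamma_\sigma$ is recovered as $(\mathrm{id}_M\otimes W_{\ell_\sigma})(H)$ for an explicit polynomial morphism $\ell_\sigma:D^3\to F$.

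Next, I would compute the six first-order strong differences $\gamma_\sigma\overset{\cdot}{\underset{i}{-}}\gamma_\tau\in M\otimes W_{D^2}$ by composing $H$ with the morphisms $\varphi_i^3,\psi_i^3$ from Propositions~\ref{t5.5}, \ref{t5.6}, \ref{t5.7}, exactly as the proof of the primordial identity expressed each $g_{(\gamma_i,\gamma_j)}$ in terms of the encoder $h_{(\gamma_1,\gamma_2,\gamma_3)}$. Each such first strong difference is thereby exhibited as $(\mathrm{id}_M\otimes W_{\mu})(H)$ for an explicit $\mu:D^2\to F$. Running the same mechanism a second time, but now using the basic encoding $g_{(\gamma_1,\gamma_2)}$ of Section~\ref{s3} for the outer $\overset{\cdot}{-}$, yields for each of the three $t_k\in M\otimes W_D$ (the three expressions in the theorem) an explicit representation $t_k=(\mathrm{id}_M\otimes W_{\nu_k})(H)$ with $\nu_k:D\to F$ a polynomial map one writes down directly.

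To conclude $t_1+t_2+t_3=\mathbf{0}_{\mathrm{id}_M}$ I would invoke the unlabelled proposition immediately preceding Section~4, which produces a unique $l_{(t_1,t_2,t_3)}\in M\otimes W_{D(3)}$ that restricts to the three $t_k$. By its uniqueness together with the bifunctoriality of $\otimes$, this $l_{(t_1,t_2,t_3)}$ must coincide with $(\mathrm{id}_M\otimes W_{\kappa})(H)$ for a polynomial $\kappa:D(3)\to F$ canonically assembled from the $\nu_k$; hence
\[
t_1+t_2+t_3=(\mathrm{id}_M\otimes W_{d\in D\mapsto(d,d,d)\in D(3)})(l_{(t_1,t_2,t_3)})=(\mathrm{id}_M\otimes W_{\kappa\circ\Delta})(H),
\]
where $\Delta$ denotes the diagonal $D\to D(3)$. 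The point is that $\kappa\circ\Delta:D\to F$ will be forced to be the zero map purely by cancellation among the extra coordinates of $F$, in direct analogy with the step $(0,0,d-d,d-d)=(0,0,0,0)$ at the end of the proof of Theorem~\ref{t5.2}.

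The main obstacle is not conceptual but combinatorial: pinpointing the correct simplicial object $F$ (i.e., its set $\mathfrak{p}$ of forbidden monomials) so that $W_F$ encodes exactly the six compatible elements without redundancy, and writing down the polynomial morphisms $\ell_\sigma$, $\mu$, $\nu_k$, $\kappa$ in the right coordinates so that the final cancellation is transparent. Once $F$ is correctly identified, the rest reduces, via microlinearity, to a direct algebraic calculation in $W_F$ entirely in the style of Lemma~\ref{t5.4}.
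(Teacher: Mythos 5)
Your plan is essentially the paper's own proof: the paper builds exactly such a master simplicial object (its $G\subset D^{8}$ in Theorem \ref{t5.12} and Lemma \ref{t5.13}, your $F$), encodes the six microcubes through the three intermediate objects $E[1],E[2],E[3]$ of Propositions \ref{t5.9}--\ref{t5.11} into a single $m\in M\otimes W_{G}$, reads off each of the three expressions as $(\mathrm{id}_{M}\otimes W_{\nu_{k}})(m)$ for explicit $\nu_{k}:D\rightarrow G$, and concludes by the $l_{(t_{1},t_{2},t_{3})}$ device and cancellation along the diagonal $D\rightarrow D(3)$, just as in Theorem \ref{t5.2}. Be aware, though, that the ``combinatorial obstacle'' you defer --- pinpointing $F$ and verifying the hexagonal limit property --- is precisely where almost all of the paper's labour lies (the explicit polynomial computation of Lemma \ref{t5.13}), so your outline is correct but leaves the hardest part unexecuted.
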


Now we set out on a long journey to establish the above theorem. Let us begin with

\begin{proposition}
\label{t5.9}The diagram
\[%
\begin{array}
[c]{ccccc}
&  & \mathrm{id}_{M}\otimes W_{\eta_{1}^{1}} &  & \\
& M\otimes W_{E[1]} & \rightarrow & M\otimes W_{D^{4}\{(2,4),(3,4)\}} & \\
\mathrm{id}_{M}\otimes W_{\eta_{2}^{1}} & \downarrow &  & \downarrow &
\mathrm{id}_{M}\otimes W_{i_{14}^{1}}\\
& M\otimes W_{D^{4}\{(2,4),(3,4)\}} & \rightarrow & M\otimes W_{D(2)} & \\
&  & \mathrm{id}_{M}\otimes W_{i_{14}^{1}} &  &
\end{array}
\]
is a pullback, where the putative object $E[1]$ is
\begin{align*}
&  D^{7}\{(2,6),(3,6),(4,6),(5,6),(1,7),(2,7),(3,7),(4,7),(5,7),(6,7),(2,4),\\
&  (2,5),(3,4),(3,5)\}
\end{align*}
the putative mapping $i_{14}^{1}:D(2)\rightarrow D^{4}\{(2,4),(3,4)\}$ is
\[
(d_{1},d_{2})\in D(2)\mapsto(d_{1},0,0,d_{2})\in D^{4}\{(2,4),(3,4)\}
\]
the putative mapping $\eta_{1}^{1}:D^{4}\{(2,4),(3,4)\}\rightarrow E[1]$ is
\begin{align*}
(d_{1},d_{2},d_{3},d_{4})  &  \in D^{4}\{(2,4),(3,4)\}\mapsto\\
(d_{1},d_{2},d_{3},0,0,d_{4},0)  &  \in E[1]
\end{align*}
and the putative mapping $\eta_{2}^{1}:D^{4}\{(2,4),(3,4)\}\rightarrow E[1]$
is
\begin{align*}
(d_{1},d_{2},d_{3},d_{4})  &  \in D^{4}\{(2,4),(3,4)\}\mapsto\\
(d_{1},0,0,d_{2},d_{3},d_{4},d_{1}d_{4})  &  \in E[1]
\end{align*}

\end{proposition}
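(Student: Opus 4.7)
The plan is to follow the strategy established in the proofs of Propositions \ref{t5.5}, \ref{t5.6}, and \ref{t5.7}: appeal to the microlinearity of $M$ to reduce the statement to the corresponding pullback diagram of Weil algebras (obtained by stripping the leading $M\otimes$'s from every object and arrow), and then establish the algebraic pullback directly. Once the algebraic version is in hand, applying the product-preserving functor $M\otimes(\cdot)$ delivers the claimed pullback in $\mathbf{FS}$.

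To verify the Weil algebra pullback, I would first exhibit explicit monomial bases. For $W_{D^4\{(2,4),(3,4)\}}$ the surviving monomials in $X_1,\ldots,X_4$ are those avoiding both $X_2X_4$ and $X_3X_4$, giving a $10$-dimensional algebra. For $W_{E[1]}$ the surviving monomials -- those whose index set contains none of the forbidden pairs in the definition of $E[1]$ -- are $1$, the seven generators $X_i$, the products $X_1X_j$ for $2\le j\le 6$, the products $X_2X_3$ and $X_4X_5$, and the two triples $X_1X_2X_3$ and $X_1X_4X_5$, for a total of $17$ basis elements. Writing a general $\gamma\in W_{E[1]}$ in this basis and computing $W_{\eta_1^1}(\gamma)$ and $W_{\eta_2^1}(\gamma)$ in the $10$-element basis, coefficient matching against prescribed $\gamma_1,\gamma_2\in W_{D^4\{(2,4),(3,4)\}}$ satisfying $W_{i_{14}^1}(\gamma_1)=W_{i_{14}^1}(\gamma_2)$ yields a linear system in the $17$ unknowns that admits a unique solution; the dimension count $10+10-3=17$ serves as an immediate sanity check.

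The one step demanding real attention is the role played by the seventh variable $X_7$ together with the extra product slot $d_1d_4$ inside $\eta_2^1$. The condition $W_{i_{14}^1}(\gamma_1)=W_{i_{14}^1}(\gamma_2)$ forces the constant, $X_1$, and $X_4$ coefficients of $\gamma_1$ and $\gamma_2$ to agree but imposes \emph{no} constraint on their $X_1X_4$ coefficients, because $X_1X_2=0$ already holds in $W_{D(2)}$. The variable $X_7$ is precisely what absorbs this discrepancy: it contributes the term $e_7\,X_1X_4$ to $W_{\eta_2^1}(\gamma)$ while vanishing in $W_{\eta_1^1}(\gamma)$, so that the unique choice $e_7$ equal to the difference of the two $X_1X_4$-coefficients completes the reconstruction. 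I expect the main bookkeeping obstacle to be verifying that $\eta_2^1$ is well-defined -- i.e., that the image of the generator $(d_1,0,0,d_2,d_3,d_4,d_1d_4)$ satisfies all fourteen relations defining $E[1]$ given only the relations $d_2d_4=d_3d_4=0$ of $D^4\{(2,4),(3,4)\}$ -- after which the nonsingularity of the linear system and the remainder of the verification are entirely routine.
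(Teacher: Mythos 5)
Your proposal follows exactly the paper's route: reduce via the microlinearity of $M$ to the corresponding pullback diagram of Weil algebras, which the paper's proof merely asserts without verification. Your explicit coefficient check is correct — the basis counts ($17$ for $W_{E[1]}$, $10$ for $W_{D^{4}\{(2,4),(3,4)\}}$, $3$ for $W_{D(2)}$, with $10+10-3=17$), the observation that $W_{i_{14}^{1}}$ kills the $X_{1}X_{4}$-coefficients so that the coefficient of $X_{7}$ must absorb their difference, and the well-definedness of $\eta_{2}^{1}$ all hold — and it supplies, in the style of the paper's own Lemmas \ref{t5.4} and \ref{t5.13}, precisely the algebraic verification the paper omits here.
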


\begin{proof}
This follows from the microlinearity of $M$ and the pullback diagram of Weil
algebras
\[%
\begin{array}
[c]{ccccc}
&  & W_{\eta_{1}^{1}} &  & \\
& W_{E[1]} & \rightarrow & W_{D^{4}\{(2,4),(3,4)\}} & \\
W_{\eta_{2}^{1}} & \downarrow &  & \downarrow & W_{i_{14}^{1}}\\
& W_{D^{4}\{(2,4),(3,4)\}} & \rightarrow & W_{D(2)} & \\
&  & W_{i_{14}^{1}} &  &
\end{array}
\]

\end{proof}

\begin{notation}
We will write $\iota_{1}^{1}$, $\iota_{2}^{1}$, $\iota_{3}^{1}$ and $\iota
_{4}^{1}$ for the putative mappings $\eta_{1}^{1}\circ\varphi_{1}^{3}$,
$\eta_{1}^{1}\circ\psi_{1}^{3}$, $\eta_{2}^{1}\circ\varphi_{1}^{3}\ $and
$\eta_{2}^{1}\circ\psi_{1}^{3}$ respectively. That is to say, we have
\begin{align*}
\iota_{1}^{1}  &  :(d_{1},d_{2},d_{3})\in D^{3}\mapsto(d_{1},d_{2}%
,d_{3},0,0,0,0)\in E[1]\\
\iota_{2}^{1}  &  :(d_{1},d_{2},d_{3})\in D^{3}\mapsto(d_{1},d_{2}%
,d_{3},0,0,d_{2}d_{3},0)\in E[1]\\
\iota_{3}^{1}  &  :(d_{1},d_{2},d_{3})\in D^{3}\mapsto(d_{1},0,0,d_{2}%
,d_{3},0,0)\in E[1]\\
\iota_{4}^{1}  &  :(d_{1},d_{2},d_{3})\in D^{3}\mapsto(d_{1},0,0,d_{2}%
,d_{3},d_{2}d_{3},d_{1}d_{2}d_{3})\in E[1]
\end{align*}

\end{notation}

\begin{corollary}
For any $\gamma_{1},\gamma_{2},\gamma_{3},\gamma_{4}\in M\otimes W_{D^{3}}$,
if the expression
\[
(\gamma_{4}\overset{\cdot}{\underset{1}{-}}\gamma_{3})\overset{\cdot}%
{-}(\gamma_{2}\overset{\cdot}{\underset{1}{-}}\gamma_{1})
\]
is well defined, then there exists unique $\gamma\in M\otimes W_{E[1]}$ such
that $(\mathrm{id}_{M}\otimes W_{\iota_{i}^{1}})(\gamma)=\gamma_{i}$
($i=1,2,3,4$).
\end{corollary}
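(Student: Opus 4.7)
The plan is to obtain $\gamma$ in two stages: first assemble the pairs $(\gamma_1,\gamma_2)$ and $(\gamma_3,\gamma_4)$ separately using Proposition~\ref{t5.5}, then glue the two resulting encodings together via the pullback supplied by Proposition~\ref{t5.9}.

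First I would unpack what ``the expression is well defined'' means. It demands (i)~$(\mathrm{id}_{M}\otimes W_{i_{D^{3}\{(2,3)\}}^{D^{3}}})(\gamma_{1})=(\mathrm{id}_{M}\otimes W_{i_{D^{3}\{(2,3)\}}^{D^{3}}})(\gamma_{2})$, (ii)~the analogous equation for $\gamma_{3},\gamma_{4}$, and (iii)~that the two strong differences $\gamma_{2}\overset{\cdot}{\underset{1}{-}}\gamma_{1}$ and $\gamma_{4}\overset{\cdot}{\underset{1}{-}}\gamma_{3}$ in $M\otimes W_{D^{2}}$ agree after applying $\mathrm{id}_{M}\otimes W_{i_{D(2)}^{D^{2}}}$. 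Condition~(i) lets me invoke Proposition~\ref{t5.5} to produce a unique $g^{1}_{(\gamma_{1},\gamma_{2})}\in M\otimes W_{D^{4}\{(2,4),(3,4)\}}$ with $(\mathrm{id}_{M}\otimes W_{\varphi_{1}^{3}})(g^{1}_{(\gamma_{1},\gamma_{2})})=\gamma_{1}$ and $(\mathrm{id}_{M}\otimes W_{\psi_{1}^{3}})(g^{1}_{(\gamma_{1},\gamma_{2})})=\gamma_{2}$; similarly condition~(ii) yields $g^{1}_{(\gamma_{3},\gamma_{4})}$.

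Next I would check that the pullback of Proposition~\ref{t5.9} applies to the pair $(g^{1}_{(\gamma_{1},\gamma_{2})},g^{1}_{(\gamma_{3},\gamma_{4})})$, i.e.\ that
\[
(\mathrm{id}_{M}\otimes W_{i_{14}^{1}})(g^{1}_{(\gamma_{1},\gamma_{2})})=(\mathrm{id}_{M}\otimes W_{i_{14}^{1}})(g^{1}_{(\gamma_{3},\gamma_{4})}).
\]
This is exactly where condition~(iii) enters: unwinding the definition of the first strong difference shows that the two sides above are the images of $\gamma_{2}\overset{\cdot}{\underset{1}{-}}\gamma_{1}$ and $\gamma_{4}\overset{\cdot}{\underset{1}{-}}\gamma_{3}$ under $\mathrm{id}_{M}\otimes W_{i_{D(2)}^{D^{2}}}$ (since $i_{14}^{1}$ factors as $(d_{1},0,0,d_{2})$ applied to the $D(2)$‑inclusion), and condition~(iii) asserts they agree. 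With compatibility verified, Proposition~\ref{t5.9} produces a unique $\gamma\in M\otimes W_{E[1]}$ with $(\mathrm{id}_{M}\otimes W_{\eta_{1}^{1}})(\gamma)=g^{1}_{(\gamma_{1},\gamma_{2})}$ and $(\mathrm{id}_{M}\otimes W_{\eta_{2}^{1}})(\gamma)=g^{1}_{(\gamma_{3},\gamma_{4})}$.

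Finally, I would read off the four required identities $(\mathrm{id}_{M}\otimes W_{\iota_{i}^{1}})(\gamma)=\gamma_{i}$ by composing: since $\iota_{1}^{1}=\eta_{1}^{1}\circ\varphi_{1}^{3}$, $\iota_{2}^{1}=\eta_{1}^{1}\circ\psi_{1}^{3}$, $\iota_{3}^{1}=\eta_{2}^{1}\circ\varphi_{1}^{3}$ and $\iota_{4}^{1}=\eta_{2}^{1}\circ\psi_{1}^{3}$, functoriality of $\cdot\otimes W_{\cdot}$ delivers $\gamma_{1},\gamma_{2},\gamma_{3},\gamma_{4}$ respectively. Uniqueness of $\gamma$ follows from the uniqueness clauses in Proposition~\ref{t5.5} (applied to both pairs) together with the uniqueness clause in Proposition~\ref{t5.9}: any competitor $\gamma'$ would produce the same $g^{1}_{(\gamma_{1},\gamma_{2})}$ and $g^{1}_{(\gamma_{3},\gamma_{4})}$ under $\eta_{1}^{1}$ and $\eta_{2}^{1}$ and hence coincide with $\gamma$. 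The only step that requires genuine attention is the compatibility check in the second paragraph; everything else is a direct bookkeeping application of the previously established pullback diagrams.
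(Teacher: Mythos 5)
Your argument is correct and is exactly the derivation the paper intends: the corollary is stated without explicit proof, but the surrounding Notation (defining $\iota_{i}^{1}$ as the composites $\eta^{1}\circ\varphi_{1}^{3}$, $\eta^{1}\circ\psi_{1}^{3}$) and the subsequent Remark (recovering $g_{(\gamma_{1},\gamma_{2})}^{1}$ and $g_{(\gamma_{3},\gamma_{4})}^{1}$ via $\mathrm{id}_{M}\otimes W_{\eta_{1}^{1}}$ and $\mathrm{id}_{M}\otimes W_{\eta_{2}^{1}}$) show that the intended proof is precisely your two-stage gluing through Propositions \ref{t5.5} and \ref{t5.9}, including the compatibility check that $i_{14}^{1}$ factors through $i_{D(2)}^{D^{2}}$.
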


\begin{remark}
This means that $\gamma$ encodes $\gamma_{1}$, $\gamma_{2}$, $\gamma_{3}$ and
$\gamma_{4}$, which are in turn recovered from $\gamma$ via $\mathrm{id}%
_{M}\otimes W_{\iota_{i}^{1}}$'s.
\end{remark}

\begin{notation}
We will write $h_{(\gamma_{1},\gamma_{2},\gamma_{3},\gamma_{4})}^{1}$ for
$\gamma$ in the above corollary.
\end{notation}

\begin{remark}
We note that
\begin{align*}
g_{(\gamma_{1},\gamma_{2})}^{1}  &  =(\mathrm{id}_{M}\otimes W_{\eta_{1}^{1}%
})(h_{(\gamma_{1},\gamma_{2},\gamma_{3},\gamma_{4})}^{1})\\
g_{(\gamma_{3},\gamma_{4})}^{1}  &  =(\mathrm{id}_{M}\otimes W_{\eta_{2}^{1}%
})(h_{(\gamma_{1},\gamma_{2},\gamma_{3},\gamma_{4})}^{1})
\end{align*}
Therefore we have
\begin{align*}
\gamma_{2}\overset{\cdot}{\underset{1}{-}}\gamma_{1}  &  =(\mathrm{id}%
_{M}\otimes W_{(d_{1},d_{2})\in D^{2}\mapsto(d_{1},0,0,d_{2})\in
D^{4}\{(2,4),(3,4)\}})(g_{(\gamma_{1},\gamma_{2})}^{1})\\
&  =(\mathrm{id}_{M}\otimes W_{(d_{1},d_{2})\in D^{2}\mapsto(d_{1}%
,0,0,d_{2})\in D^{4}\{(2,4),(3,4)\}})\circ(\mathrm{id}_{M}\otimes W_{\eta
_{1}^{1}})(h_{(\gamma_{1},\gamma_{2},\gamma_{3},\gamma_{4})}^{1})\\
&  =(\mathrm{id}_{M}\otimes W_{(d_{1},d_{2})\in D^{2}\mapsto(d_{1}%
,0,0,0,0,d_{2},0)\in E[1]})(h_{(\gamma_{1},\gamma_{2},\gamma_{3},\gamma_{4}%
)}^{1})\\
\gamma_{4}\overset{\cdot}{\underset{1}{-}}\gamma_{3}  &  =(\mathrm{id}%
_{M}\otimes W_{(d_{1},d_{2})\in D^{2}\mapsto(d_{1},0,0,d_{2})\in
D^{4}\{(2,4),(3,4)\}})(g_{(\gamma_{3},\gamma_{4})}^{1})\\
&  =(\mathrm{id}_{M}\otimes W_{(d_{1},d_{2})\in D^{2}\mapsto(d_{1}%
,0,0,d_{2})\in D^{4}\{(2,4),(3,4)\}})\circ(\mathrm{id}_{M}\otimes W_{\eta
_{2}^{1}})(h_{(\gamma_{1},\gamma_{2},\gamma_{3},\gamma_{4})}^{1})\\
&  =(\mathrm{id}_{M}\otimes W_{(d_{1},d_{2})\in D^{2}\mapsto(d_{1}%
,0,0,0,0,d_{2},d_{1}d_{2})\in E[1]})(h_{(\gamma_{1},\gamma_{2},\gamma
_{3},\gamma_{4})}^{1})
\end{align*}
Thus we have
\[
g_{(\gamma_{2}\overset{\cdot}{\underset{1}{-}}\gamma_{1},\gamma_{4}%
\overset{\cdot}{\underset{1}{-}}\gamma_{3})}=(\mathrm{id}_{M}\otimes
W_{(d_{1},d_{2},d_{3})\in D^{3}\{(1,3),(2,3)\}\mapsto(d_{1},0,0,0,0,d_{2}%
,d_{3})\in E[1]})(h_{(\gamma_{1},\gamma_{2},\gamma_{3},\gamma_{4})}^{1})
\]
Finally we have
\begin{align*}
&  (\gamma_{4}\overset{\cdot}{\underset{1}{-}}\gamma_{3})\overset{\cdot}%
{-}(\gamma_{2}\overset{\cdot}{\underset{1}{-}}\gamma_{1})\\
&  =(\mathrm{id}_{M}\otimes W_{d\in D\mapsto(0,0,d)\in D^{3}\{(1,3),(2,3)\}}%
)(g_{(\gamma_{2}\overset{\cdot}{\underset{1}{-}}\gamma_{1},\gamma_{4}%
\overset{\cdot}{\underset{1}{-}}\gamma_{3})})\\
&  =(\mathrm{id}_{M}\otimes W_{d\in D\mapsto(0,0,d)\in D^{3}\{(1,3),(2,3)\}}%
)\circ(\mathrm{id}_{M}\otimes W_{(d_{1},d_{2},d_{3})\in D^{3}%
\{(1,3),(2,3)\}\mapsto(d_{1},0,0,0,0,d_{2},d_{3})\in E[1]})\\
&  (h_{(\gamma_{1},\gamma_{2},\gamma_{3},\gamma_{4})}^{1})\\
&  =(\mathrm{id}_{M}\otimes W_{d\in D\mapsto(0,0,0,0,0,0,d)\in E[1]}%
)(h_{(\gamma_{1},\gamma_{2},\gamma_{3},\gamma_{4})}^{1})
\end{align*}

\end{remark}

\begin{proposition}
\label{t5.10}The diagram
\[%
\begin{array}
[c]{ccccc}
&  & \mathrm{id}_{M}\otimes W_{\eta_{1}^{2}} &  & \\
& M\otimes W_{E[2]} & \rightarrow & M\otimes W_{D^{4}\{(1,4),(3,4)\}} & \\
\mathrm{id}_{M}\otimes W_{\eta_{2}^{2}} & \downarrow &  & \downarrow &
\mathrm{id}_{M}\otimes W_{i_{24}^{2}}\\
& M\otimes W_{D^{4}\{(1,4),(3,4)\}} & \rightarrow & M\otimes W_{D(2)} & \\
&  & \mathrm{id}_{M}\otimes W_{i_{24}^{2}} &  &
\end{array}
\]
is a pullback, where the putative object $E[2]$ is
\begin{align*}
&  D^{7}\{(1,6),(3,6),(4,6),(5,6),(1,7),(2,7),(3,7),(4,7),(5,7),(6,7),(1,4),\\
&  (1,5),(3,4),(3,5)\}
\end{align*}
the putative mapping $i_{24}^{2}:D(2)\rightarrow D^{4}\{(1,4),(3,4)\}$ is
\[
(d_{1},d_{2})\in D(2)\mapsto(0,d_{1},0,d_{2})\in D^{4}\{(1,4),(3,4)\}
\]
the putative mapping $\eta_{1}^{2}:D^{4}\{(1,4),(3,4)\}\}\rightarrow E[2]$ is
\begin{align*}
(d_{1},d_{2},d_{3},d_{4})  &  \in D^{4}\{(1,4),(3,4)\}\mapsto\\
(d_{1},d_{2},d_{3},0,0,d_{4},0)  &  \in E[2]
\end{align*}
and the putative mapping $\eta_{2}^{2}:D^{4}\{(1,4),(3,4)\}\rightarrow E[2]$
is
\begin{align*}
(d_{1},d_{2},d_{3},d_{4})  &  \in D^{4}\{(1,4),(3,4)\}\mapsto\\
(0,d_{2},0,d_{1},d_{3},d_{4},d_{2}d_{4})  &  \in E[2]
\end{align*}

\end{proposition}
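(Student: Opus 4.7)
The plan is to mimic, mutatis mutandis, the strategy of Proposition \ref{t5.9}: reduce the alleged pullback in $\mathbf{FS}$ to the corresponding pullback diagram in $\mathbf{W}$, and invoke the microlinearity of $M$ (together with the fact that $M\otimes-$ is product- and limit-preserving on finite limit diagrams of Weil algebras) to transport the pullback property across the functor. Thus the whole proposition is reduced to verifying that the square
\[
\begin{array}[c]{ccccc}
&  & W_{\eta_{1}^{2}} &  & \\
& W_{E[2]} & \rightarrow & W_{D^{4}\{(1,4),(3,4)\}} & \\
W_{\eta_{2}^{2}} & \downarrow &  & \downarrow & W_{i_{24}^{2}}\\
& W_{D^{4}\{(1,4),(3,4)\}} & \rightarrow & W_{D(2)} & \\
&  & W_{i_{24}^{2}} &  &
\end{array}
\]
is a pullback in $\mathbf{W}$.

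To carry out this verification I would represent each Weil algebra as $\mathbf{R}[X_{1},\ldots,X_{n}]$ modulo the ideal generated by the monomials prescribed by the relevant simplicial object, then write out the polynomial expansions of $\gamma_{1},\gamma_{2}\in W_{D^{4}\{(1,4),(3,4)\}}$ and a candidate $\gamma\in W_{E[2]}$ with coefficients up to the surviving monomials only. The hypothesis $W_{i_{24}^{2}}(\gamma_{1})=W_{i_{24}^{2}}(\gamma_{2})$ equates precisely those coefficients of $\gamma_{1},\gamma_{2}$ that are visible in $W_{D(2)}$ after the embedding $(d_{1},d_{2})\in D(2)\mapsto(0,d_{1},0,d_{2})$; all remaining coefficients of $\gamma_{1}$ and $\gamma_{2}$ are free, and a routine coefficient-matching shows that there is a unique way to choose the extra coefficients of $\gamma$ (those attached to the sixth and seventh indeterminates of $E[2]$) so that $W_{\eta_{1}^{2}}(\gamma)=\gamma_{1}$ and $W_{\eta_{2}^{2}}(\gamma)=\gamma_{2}$.

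Most of this calculation is in fact a direct transcription of the one behind Proposition \ref{t5.9} under the index relabelling that interchanges the roles of the first two coordinates, so one can largely appeal to symmetry once the main case has been done. The main obstacle, and the only point at which genuine care is required, is bookkeeping the defining relations of $E[2]$, namely $\{(1,6),(3,6),(4,6),(5,6),(1,7),(2,7),(3,7),(4,7),(5,7),(6,7),(1,4),(1,5),(3,4),(3,5)\}$: one must check that these relations are simultaneously (i) implied by those of the domain $D^{4}\{(1,4),(3,4)\}$ under both $\eta_{1}^{2}$ and $\eta_{2}^{2}$, so that the maps are well defined, and (ii) tight enough that the coefficient-matching argument above admits a unique solution. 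Once this list is confirmed to correspond exactly to the vanishing monomials forced by $\eta_{2}^{2}$ (which is where the nontrivial product $d_{2}d_{4}$ in the seventh slot enters) the remainder of the argument reduces to elementary linear algebra on monomial coefficients.
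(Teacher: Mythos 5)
Your proposal matches the paper's proof: the paper likewise disposes of this proposition in one line by invoking the microlinearity of $M$ to transport the corresponding pullback square of Weil algebras $W_{E[2]}$, $W_{D^{4}\{(1,4),(3,4)\}}$, $W_{D(2)}$ across $M\otimes-$. Your additional coefficient-matching sketch (which the paper leaves implicit here, in the style of its Lemma \ref{t5.4}) is sound, and the symmetry with $E[1]$ under interchanging the first two indices is exactly right.
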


\begin{proof}
This follows from the microlinearity of $M$ and the pullback diagram of Weil
algebras
\[%
\begin{array}
[c]{ccccc}
&  & W_{\eta_{1}^{2}} &  & \\
& W_{E[2]} & \rightarrow & W_{D^{4}\{(1,4),(3,4)\}} & \\
W_{\eta_{2}^{2}} & \downarrow &  & \downarrow & W_{i_{24}^{2}}\\
& W_{D^{4}\{(1,4),(3,4)\}} & \rightarrow & W_{D(2)} & \\
&  & W_{i_{24}^{2}} &  &
\end{array}
\]

\end{proof}

\begin{notation}
We will write $\iota_{1}^{2}$, $\iota_{2}^{2}$, $\iota_{3}^{2}$ and $\iota
_{4}^{2}$ for the putative mappings $\eta_{1}^{2}\circ\varphi_{2}^{3}$,
$\eta_{1}^{2}\circ\psi_{2}^{3}$, $\eta_{2}^{2}\circ\varphi_{2}^{3}\ $and
$\eta_{2}^{2}\circ\psi_{2}^{3}$ respectively. That is to say, we have
\begin{align*}
\iota_{1}^{2}  &  :(d_{1},d_{2},d_{3})\in D^{3}\mapsto(d_{1},d_{2}%
,d_{3},0,0,0,0)\in E[2]\\
\iota_{2}^{2}  &  :(d_{1},d_{2},d_{3})\in D^{3}\mapsto(d_{1},d_{2}%
,d_{3},0,0,d_{2}d_{3},0)\in E[2]\\
\iota_{3}^{2}  &  :(d_{1},d_{2},d_{3})\in D^{3}\mapsto(0,d_{2},0,d_{3}%
,d_{1},0,0)\in E[2]\\
\iota_{4}^{2}  &  :(d_{1},d_{2},d_{3})\in D^{3}\mapsto(0,d_{2},0,d_{3}%
,d_{1},d_{1}d_{3},d_{1}d_{2}d_{3})\in E[2]
\end{align*}

\end{notation}

\begin{corollary}
For any $\gamma_{1},\gamma_{2},\gamma_{3},\gamma_{4}\in M\otimes W_{D^{3}}$,
if the expression
\[
(\gamma_{4}\overset{\cdot}{\underset{2}{-}}\gamma_{3})\overset{\cdot}%
{-}(\gamma_{2}\overset{\cdot}{\underset{2}{-}}\gamma_{1})
\]
is well defined, then there exists unique $\gamma\in M\otimes W_{E[2]}$ such
that $(\mathrm{id}_{M}\otimes W_{\iota_{i}^{2}})(\gamma)=\gamma_{i}$
($i=1,2,3,4$).
\end{corollary}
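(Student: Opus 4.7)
The plan is to follow the template used in the $E[1]$-case corollary immediately preceding this one, simply swapping in the second strong difference throughout. First I would unpack what it means for the expression $(\gamma_{4}\overset{\cdot}{\underset{2}{-}}\gamma_{3})\overset{\cdot}{-}(\gamma_{2}\overset{\cdot}{\underset{2}{-}}\gamma_{1})$ to be well defined. By the definition of $\overset{\cdot}{\underset{2}{-}}$ (following Proposition \ref{t5.6}), well-definedness of $\gamma_{2}\overset{\cdot}{\underset{2}{-}}\gamma_{1}$ is exactly the equality of $\gamma_{1}$ and $\gamma_{2}$ under $\mathrm{id}_{M}\otimes W_{i_{D^{3}\{(1,3)\}}^{D^{3}}}$, and analogously for the pair $(\gamma_{3},\gamma_{4})$. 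Well-definedness of the outer $\overset{\cdot}{-}$ then demands that $\gamma_{2}\overset{\cdot}{\underset{2}{-}}\gamma_{1}$ and $\gamma_{4}\overset{\cdot}{\underset{2}{-}}\gamma_{3}$ agree under $\mathrm{id}_{M}\otimes W_{i_{D(2)}^{D^{2}}}$.

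Next I would apply the corollary to Proposition \ref{t5.6} twice to produce
\[
g_{(\gamma_{1},\gamma_{2})}^{2},\ g_{(\gamma_{3},\gamma_{4})}^{2}\in M\otimes W_{D^{4}\{(1,4),(3,4)\}}
\]
encoding the two inner differences. The crucial observation is that the compatibility condition on the outer $\overset{\cdot}{-}$, traced back through the formula $(\mathrm{id}_{M}\otimes W_{(d_{1},d_{2})\in D^{2}\mapsto(0,d_{1},0,d_{2})\in D^{4}\{(1,4),(3,4)\}})$ defining the second strong difference, is exactly the statement that $g_{(\gamma_{1},\gamma_{2})}^{2}$ and $g_{(\gamma_{3},\gamma_{4})}^{2}$ agree under $\mathrm{id}_{M}\otimes W_{i_{24}^{2}}$. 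Hence Proposition \ref{t5.10} provides a unique $\gamma\in M\otimes W_{E[2]}$ satisfying
\[
(\mathrm{id}_{M}\otimes W_{\eta_{1}^{2}})(\gamma)=g_{(\gamma_{1},\gamma_{2})}^{2},\qquad (\mathrm{id}_{M}\otimes W_{\eta_{2}^{2}})(\gamma)=g_{(\gamma_{3},\gamma_{4})}^{2}.
\]

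Having produced $\gamma$, I would then verify the four equalities $(\mathrm{id}_{M}\otimes W_{\iota_{i}^{2}})(\gamma)=\gamma_{i}$ directly from the bifunctoriality of $\otimes$ and the very definitions $\iota_{1}^{2}=\eta_{1}^{2}\circ\varphi_{2}^{3}$, $\iota_{2}^{2}=\eta_{1}^{2}\circ\psi_{2}^{3}$, $\iota_{3}^{2}=\eta_{2}^{2}\circ\varphi_{2}^{3}$, $\iota_{4}^{2}=\eta_{2}^{2}\circ\psi_{2}^{3}$. For example, $(\mathrm{id}_{M}\otimes W_{\iota_{1}^{2}})(\gamma)=(\mathrm{id}_{M}\otimes W_{\varphi_{2}^{3}})(g_{(\gamma_{1},\gamma_{2})}^{2})=\gamma_{1}$ by the defining property of $g_{(\gamma_{1},\gamma_{2})}^{2}$; the other three are analogous. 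Uniqueness of $\gamma$ follows from stringing together the uniqueness clauses in the two pullbacks (Proposition \ref{t5.6} and Proposition \ref{t5.10}).

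The only step that carries any conceptual content is the compatibility check invoked before applying Proposition \ref{t5.10}: one must see that the two constructed elements of $M\otimes W_{D^{4}\{(1,4),(3,4)\}}$ really do land in the same element of $M\otimes W_{D(2)}$ when passed through $\mathrm{id}_{M}\otimes W_{i_{24}^{2}}$. This is where the hypothesis that the full nested expression is well defined gets used, and I expect this to be the main (though still mild) obstacle; the rest is a bookkeeping exercise of composing putative maps and invoking functoriality, exactly parallel to the $E[1]$-case treatment already laid out in the paper.
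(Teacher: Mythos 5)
Your proposal is correct and is exactly the argument the paper intends (the paper states this corollary without proof, as an immediate consequence of the pullback in Proposition \ref{t5.10} combined with that in Proposition \ref{t5.6}): the key identification that the composite of $(d_{1},d_{2})\mapsto(0,d_{1},0,d_{2})$ with $i_{D(2)}^{D^{2}}$ is precisely $i_{24}^{2}$, so that well-definedness of the outer difference is the pullback compatibility condition, is the right one, and the four equalities and uniqueness follow from $\iota_{i}^{2}=\eta_{j}^{2}\circ\varphi_{2}^{3}$ or $\eta_{j}^{2}\circ\psi_{2}^{3}$ together with the uniqueness clauses of the two pullbacks, just as you say.
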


\begin{remark}
This means that $\gamma$ encodes $\gamma_{1}$, $\gamma_{2}$, $\gamma_{3}$ and
$\gamma_{4}$, which are in turn recovered from $\gamma$ via $\mathrm{id}%
_{M}\otimes W_{\iota_{i}^{2}}$'s.
\end{remark}

\begin{notation}
We will write $h_{(\gamma_{1},\gamma_{2},\gamma_{3},\gamma_{4})}^{2}$ for
$\gamma$ in the above corollary.
\end{notation}

\begin{remark}
We note that
\begin{align*}
g_{(\gamma_{1},\gamma_{2})}^{2}  &  =(\mathrm{id}_{M}\otimes W_{\eta_{1}^{2}%
})(h_{(\gamma_{1},\gamma_{2},\gamma_{3},\gamma_{4})}^{2})\\
g_{(\gamma_{3},\gamma_{4})}^{2}  &  =(\mathrm{id}_{M}\otimes W_{\eta_{2}^{2}%
})(h_{(\gamma_{1},\gamma_{2},\gamma_{3},\gamma_{4})}^{2})
\end{align*}
Therefore we have
\begin{align*}
\gamma_{2}\overset{\cdot}{\underset{2}{-}}\gamma_{1}  &  =(\mathrm{id}%
_{M}\otimes W_{(d_{1},d_{2})\in D^{2}\mapsto(0,d_{1},0,d_{2})\in
D^{4}\{(1,4),(3,4)\}})(g_{(\gamma_{1},\gamma_{2})}^{2})\\
&  =(\mathrm{id}_{M}\otimes W_{(d_{1},d_{2})\in D^{2}\mapsto(0,d_{1}%
,0,d_{2})\in D^{4}\{(1,4),(3,4)\}})\circ(\mathrm{id}_{M}\otimes W_{\eta
_{1}^{2}})(h_{(\gamma_{1},\gamma_{2},\gamma_{3},\gamma_{4})}^{2})\\
&  =(\mathrm{id}_{M}\otimes W_{(d_{1},d_{2})\in D^{2}\mapsto(0,d_{1}%
,0,0,0,d_{2},0)\in E[2]})(h_{(\gamma_{1},\gamma_{2},\gamma_{3},\gamma_{4}%
)}^{2})\\
\gamma_{4}\overset{\cdot}{\underset{2}{-}}\gamma_{3}  &  =(\mathrm{id}%
_{M}\otimes W_{(d_{1},d_{2})\in D^{2}\mapsto(0,d_{1},0,d_{2})\in
D^{4}\{(1,4),(3,4)\}})(g_{(\gamma_{3},\gamma_{4})}^{2})\\
&  =(\mathrm{id}_{M}\otimes W_{(d_{1},d_{2})\in D^{2}\mapsto(0,d_{1}%
,0,d_{2})\in D^{4}\{(1,4),(3,4)\}})\circ(\mathrm{id}_{M}\otimes W_{\eta
_{2}^{2}})(h_{(\gamma_{1},\gamma_{2},\gamma_{3},\gamma_{4})}^{2})\\
&  =(\mathrm{id}_{M}\otimes W_{(d_{1},d_{2})\in D^{2}\mapsto(0,d_{1}%
,0,0,0,d_{2},d_{1}d_{2})\in E[2]})(h_{(\gamma_{1},\gamma_{2},\gamma_{3}%
,\gamma_{4})}^{2})
\end{align*}
Thus we have
\[
g_{(\gamma_{2}\overset{\cdot}{\underset{2}{-}}\gamma_{1},\gamma_{4}%
\overset{\cdot}{\underset{2}{-}}\gamma_{3})}=(\mathrm{id}_{M}\otimes
W_{(d_{1},d_{2},d_{3})\in D^{3}\{(1,3),(2,3)\}\mapsto(0,d_{1},0,0,0,d_{2}%
,d_{3})\in E[2]})(h_{(\gamma_{1},\gamma_{2},\gamma_{3},\gamma_{4})}^{2})
\]
Finally we have
\begin{align*}
&  (\gamma_{4}\overset{\cdot}{\underset{2}{-}}\gamma_{3})\overset{\cdot}%
{-}(\gamma_{2}\overset{\cdot}{\underset{2}{-}}\gamma_{1})\\
&  =(\mathrm{id}_{M}\otimes W_{d\in D\mapsto(0,0,d)\in D^{3}\{(1,3),(2,3)\}}%
)(g_{(\gamma_{2}\overset{\cdot}{\underset{2}{-}}\gamma_{1},\gamma_{4}%
\overset{\cdot}{\underset{2}{-}}\gamma_{3})})\\
&  =(\mathrm{id}_{M}\otimes W_{d\in D\mapsto(0,0,d)\in D^{3}\{(1,3),(2,3)\}}%
)\circ(\mathrm{id}_{M}\otimes W_{(d_{1},d_{2},d_{3})\in D^{3}%
\{(1,3),(2,3)\}\mapsto(0,d_{1},0,0,0,d_{2},d_{3})\in E[2]})\\
&  (h_{(\gamma_{1},\gamma_{2},\gamma_{3},\gamma_{4})}^{2})\\
&  =(\mathrm{id}_{M}\otimes W_{d\in D\mapsto(0,0,0,0,0,0,d)\in E[2]}%
)(h_{(\gamma_{1},\gamma_{2},\gamma_{3},\gamma_{4})}^{2})
\end{align*}

\end{remark}

\begin{proposition}
\label{t5.11}The diagram
\[%
\begin{array}
[c]{ccccc}
&  & \mathrm{id}_{M}\otimes W_{\eta_{1}^{3}} &  & \\
& M\otimes W_{E[3]} & \rightarrow & M\otimes W_{D^{4}\{(1,4),(2,4)\}} & \\
\mathrm{id}_{M}\otimes W_{\eta_{2}^{3}} & \downarrow &  & \downarrow &
\mathrm{id}_{M}\otimes W_{i_{34}^{3}}\\
& M\otimes W_{D^{4}\{(1,4),(2,4)\}} & \rightarrow & M\otimes W_{D(2)} & \\
&  & \mathrm{id}_{M}\otimes W_{i_{34}^{3}} &  &
\end{array}
\]
is a pullback, where the putative object $E[3]$ is
\begin{align*}
&  D^{7}\{(1,6),(2,6),(4,6),(5,6),(1,7),(2,7),(3,7),(4,7),(5,7),(6,7),(1,4),\\
&  (1,5),(2,4),(2,5)\}\}
\end{align*}
the putative mapping $i_{34}^{3}:D(2)\rightarrow D^{4}\{(1,4),(2,4)\}$ is
\[
(d_{1},d_{2})\in D(2)\mapsto(0,0,d_{1},d_{2})\in D^{4}\{(1,4),(2,4)\}
\]
the putative mapping $\eta_{1}^{3}:D^{4}\{(1,4),(2,4)\}\}\rightarrow E[3]$ is
\begin{align*}
(d_{1},d_{2},d_{3},d_{4})  &  \in D^{4}\{(1,4),(2,4)\}\mapsto\\
(d_{1},d_{2},d_{3},0,0,d_{4},0)  &  \in E[3]
\end{align*}
and the putative mapping $\eta_{2}^{3}:D^{4}\{(1,4),(3,4)\}\rightarrow E[3]$
is
\begin{align*}
(d_{1},d_{2},d_{3},d_{4})  &  \in D^{4}\{(1,4),(2,4)\}\mapsto\\
(0,0,d_{3},d_{1},d_{2},d_{4},d_{3}d_{4})  &  \in E[3]
\end{align*}

\end{proposition}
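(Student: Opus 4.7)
Proposition~\ref{t5.11} is the third in a symmetric triple (Propositions~\ref{t5.9}, \ref{t5.10}, \ref{t5.11}), each obtained from the previous by cyclically permuting the roles of the three indices in $D^{3}$. My plan is to mirror exactly the strategy used for those two results: apply microlinearity of $M$ to reduce to a pullback in $\mathbf{W}$, then verify that Weil-algebra pullback by explicit computation with polynomial representatives.

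Since $M$ is microlinear and the functor $\cdot\otimes W$ sends finite limit diagrams in $\mathbf{W}$ to limit diagrams in $\mathbf{FS}$, it suffices to establish that
\[
\begin{array}{ccccc}
 & & W_{\eta_{1}^{3}} & & \\
& W_{E[3]} & \rightarrow & W_{D^{4}\{(1,4),(2,4)\}} & \\
W_{\eta_{2}^{3}} & \downarrow & & \downarrow & W_{i_{34}^{3}} \\
& W_{D^{4}\{(1,4),(2,4)\}} & \rightarrow & W_{D(2)} & \\
& & W_{i_{34}^{3}} & &
\end{array}
\]
is a pullback in $\mathbf{W}$. Tensoring with $M$ then yields the diagram of the proposition.

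To verify the Weil-algebra pullback, I would argue with polynomial representatives, as is done explicitly in Lemma~\ref{t5.4}. A typical element of $W_{D^{4}\{(1,4),(2,4)\}}$ is a polynomial in $X_{1},X_{2},X_{3},X_{4}$ subject to $X_{i}^{2}=0$ and $X_{1}X_{4}=X_{2}X_{4}=0$, with monomial basis $1, X_{1}, X_{2}, X_{3}, X_{4}, X_{1}X_{2}, X_{1}X_{3}, X_{2}X_{3}, X_{3}X_{4}, X_{1}X_{2}X_{3}$. The morphism $W_{i_{34}^{3}}$ kills $X_{1}$ and $X_{2}$ and retains only the constant term together with the coefficients of $X_{3}$ and $X_{4}$. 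Given $\gamma_{1},\gamma_{2}\in W_{D^{4}\{(1,4),(2,4)\}}$ that agree under this map, the seven coefficients where they can differ are precisely those of $X_{1}, X_{2}, X_{1}X_{2}, X_{1}X_{3}, X_{2}X_{3}, X_{3}X_{4}, X_{1}X_{2}X_{3}$; the extra coordinates of $E[3]$ (encoded by $X_{5},X_{6},X_{7}$ together with the monomials permitted by the listed vanishing products) are arranged so that $\eta_{1}^{3}$ records $\gamma_{1}$ itself while $\eta_{2}^{3}$ absorbs the discrepancy $\gamma_{2}-\gamma_{1}$ coordinate by coordinate. Existence and uniqueness of the lift then reduce to a coefficient-matching calculation analogous to the one carried out in Lemma~\ref{t5.4}.

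The main obstacle is the combinatorial bookkeeping: parsing the 14 listed vanishing products of $E[3]$ in 7 variables to extract its monomial basis, evaluating $\eta_{1}^{3}$ and $\eta_{2}^{3}$ on each such monomial, and checking that the induced map $W_{E[3]}\to W_{D^{4}\{(1,4),(2,4)\}}\times_{W_{D(2)}}W_{D^{4}\{(1,4),(2,4)\}}$ is bijective. Once this basis is written out, the verification is just linear algebra over $\mathbf{R}$, entirely parallel to what is done in Propositions~\ref{t5.9} and~\ref{t5.10}; no new conceptual issue should arise, which is precisely why the author is content to invoke each such pullback simply by pointing to the analogous Weil-algebra diagram.
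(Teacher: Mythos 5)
Your proposal follows exactly the paper's route: reduce via microlinearity of $M$ to the corresponding pullback square of Weil algebras and then tensor with $M$. In fact the paper stops there, simply exhibiting the Weil-algebra diagram without verifying it, so your additional sketch of the coefficient-matching argument (modelled on Lemma~\ref{t5.4}, with the correct monomial bases and the correct identification of the seven coefficients not seen by $W_{i_{34}^{3}}$) only makes the argument more complete than the one in the text.
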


\begin{proof}
This follows from the microlinearity of $M$ and the pullback diagram of Weil
algebras
\[%
\begin{array}
[c]{ccccc}
&  & W_{\eta_{1}^{3}} &  & \\
& W_{E[3]} & \rightarrow & W_{D^{4}\{(1,4),(2,4)\}} & \\
W_{\eta_{2}^{3}} & \downarrow &  & \downarrow & W_{i_{34}^{3}}\\
& W_{D^{4}\{(1,4),(2,4)\}} & \rightarrow & W_{D(2)} & \\
&  & W_{i_{34}^{3}} &  &
\end{array}
\]

\end{proof}

\begin{notation}
We will write $\iota_{1}^{3}$, $\iota_{2}^{3}$, $\iota_{3}^{3}$ and $\iota
_{4}^{3}$ for the putative mappings $\eta_{1}^{3}\circ\varphi_{3}^{3}$,
$\eta_{1}^{3}\circ\psi_{3}^{3}$, $\eta_{2}^{3}\circ\varphi_{3}^{3}\ $and
$\eta_{2}^{3}\circ\psi_{3}^{3}$ respectively. That is to say, we have
\begin{align*}
\iota_{1}^{3}  &  :(d_{1},d_{2},d_{3})\in D^{3}\mapsto(d_{1},d_{2}%
,d_{3},0,0,0,0)\in E[3]\\
\iota_{2}^{3}  &  :(d_{1},d_{2},d_{3})\in D^{3}\mapsto(d_{1},d_{2}%
,d_{3},0,0,d_{1}d_{2},0)\in E[3]\\
\iota_{3}^{3}  &  :(d_{1},d_{2},d_{3})\in D^{3}\mapsto(0,0,d_{3},d_{1}%
,d_{2},0,0)\in E[3]\\
\iota_{4}^{3}  &  :(d_{1},d_{2},d_{3})\in D^{3}\mapsto(0,0,d_{3},d_{1}%
,d_{2},d_{1}d_{2},d_{1}d_{2}d_{3})\in E[3]
\end{align*}

\end{notation}

\begin{corollary}
For any $\gamma_{1},\gamma_{2},\gamma_{3},\gamma_{4}\in M\otimes W_{D^{3}}$,
if the expression
\[
(\gamma_{4}\overset{\cdot}{\underset{3}{-}}\gamma_{3})\overset{\cdot}%
{-}(\gamma_{2}\overset{\cdot}{\underset{3}{-}}\gamma_{1})
\]
is well defined, then there exists unique $\gamma\in M\otimes W_{E[3]}$ such
that $(\mathrm{id}_{M}\otimes W_{\iota_{i}^{3}})(\gamma)=\gamma_{i}$
($i=1,2,3,4$).
\end{corollary}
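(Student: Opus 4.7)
The plan is to mirror the pattern already used to establish the analogous corollaries for $E[1]$ and $E[2]$. Proposition~\ref{t5.11} furnishes the pullback square exhibiting $M\otimes W_{E[3]}$ as the fibred product of two copies of $M\otimes W_{D^{4}\{(1,4),(2,4)\}}$ over $M\otimes W_{D(2)}$, so my task is to produce compatible elements in the two copies and to verify that the resulting $\gamma\in M\otimes W_{E[3]}$ decodes correctly through each $\iota_{i}^{3}$.

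First, applying the corollary to Proposition~\ref{t5.7} separately to the pairs $(\gamma_{1},\gamma_{2})$ and $(\gamma_{3},\gamma_{4})$ yields elements $g_{(\gamma_{1},\gamma_{2})}^{3}$ and $g_{(\gamma_{3},\gamma_{4})}^{3}$ in $M\otimes W_{D^{4}\{(1,4),(2,4)\}}$; the very hypothesis that $\gamma_{2}\overset{\cdot}{\underset{3}{-}}\gamma_{1}$ and $\gamma_{4}\overset{\cdot}{\underset{3}{-}}\gamma_{3}$ exist as elements of $M\otimes W_{D^{2}}$ makes this encoding available, since that is precisely the content of the pullback in Proposition~\ref{t5.7}.

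Next, I would translate the well-definedness of the outer strong difference $(\gamma_{4}\overset{\cdot}{\underset{3}{-}}\gamma_{3})\overset{\cdot}{-}(\gamma_{2}\overset{\cdot}{\underset{3}{-}}\gamma_{1})$ into the compatibility condition required by the pullback. Unfolding the definition of $\overset{\cdot}{\underset{3}{-}}$ and composing with $\mathrm{id}_{M}\otimes W_{i_{D(2)}^{D^{2}}}$, the equality of the two projections in $M\otimes W_{D(2)}$ collapses to $(\mathrm{id}_{M}\otimes W_{i_{34}^{3}})(g_{(\gamma_{1},\gamma_{2})}^{3})=(\mathrm{id}_{M}\otimes W_{i_{34}^{3}})(g_{(\gamma_{3},\gamma_{4})}^{3})$, because the composite $D(2)\to D^{2}\to D^{4}\{(1,4),(2,4)\}$ is by construction precisely $i_{34}^{3}$. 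Proposition~\ref{t5.11} then delivers a unique $\gamma\in M\otimes W_{E[3]}$ with $(\mathrm{id}_{M}\otimes W_{\eta_{1}^{3}})(\gamma)=g_{(\gamma_{1},\gamma_{2})}^{3}$ and $(\mathrm{id}_{M}\otimes W_{\eta_{2}^{3}})(\gamma)=g_{(\gamma_{3},\gamma_{4})}^{3}$.

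Finally, to obtain $(\mathrm{id}_{M}\otimes W_{\iota_{j}^{3}})(\gamma)=\gamma_{j}$, I would use the factorizations $\iota_{1}^{3}=\eta_{1}^{3}\circ\varphi_{3}^{3}$, $\iota_{2}^{3}=\eta_{1}^{3}\circ\psi_{3}^{3}$, $\iota_{3}^{3}=\eta_{2}^{3}\circ\varphi_{3}^{3}$, and $\iota_{4}^{3}=\eta_{2}^{3}\circ\psi_{3}^{3}$ together with the bifunctoriality of $\otimes$, reducing everything to the recovery properties $(\mathrm{id}_{M}\otimes W_{\varphi_{3}^{3}})(g_{(\gamma_{1},\gamma_{2})}^{3})=\gamma_{1}$, $(\mathrm{id}_{M}\otimes W_{\psi_{3}^{3}})(g_{(\gamma_{1},\gamma_{2})}^{3})=\gamma_{2}$, and the analogous pair for $(\gamma_{3},\gamma_{4})$ that were recorded with the corollary to Proposition~\ref{t5.7}. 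Uniqueness of $\gamma$ is inherited from uniqueness in the pullback. There is no genuine obstacle here beyond careful bookkeeping: the only place one can slip is in verifying that unwinding ``well-defined'' really lands on the map $i_{34}^{3}:D(2)\rightarrow D^{4}\{(1,4),(2,4)\}$ displayed in Proposition~\ref{t5.11} and not on an isomorphic variant with the coordinate slots permuted.
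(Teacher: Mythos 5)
Your proposal is correct and is exactly the argument the paper intends (the paper leaves this corollary unproved, but your route — encode each pair via the corollary to Proposition \ref{t5.7}, observe that well-definedness of the outer strong difference is precisely the compatibility condition over $M\otimes W_{D(2)}$ since $i^{D^{2}}_{D(2)}$ followed by $(d_{1},d_{2})\mapsto(0,0,d_{1},d_{2})$ is $i_{34}^{3}$, invoke the pullback of Proposition \ref{t5.11}, and decode through the factorizations $\iota_{j}^{3}=\eta^{3}_{\cdot}\circ\varphi_{3}^{3}$ or $\eta^{3}_{\cdot}\circ\psi_{3}^{3}$ — mirrors the proof the author does spell out for the corollary to Theorem \ref{t5.12}). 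No gaps.
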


\begin{remark}
This means that $\gamma$ encodes $\gamma_{1}$, $\gamma_{2}$, $\gamma_{3}$ and
$\gamma_{4}$, which are in turn recovered from $\gamma$ via $\mathrm{id}%
_{M}\otimes W_{\iota_{i}^{3}}$'s.
\end{remark}

\begin{notation}
We will write $h_{(\gamma_{1},\gamma_{2},\gamma_{3},\gamma_{4})}^{3}$ for
$\gamma$ in the above corollary.
\end{notation}

\begin{remark}
We note that
\begin{align*}
g_{(\gamma_{1},\gamma_{2})}^{3}  &  =(\mathrm{id}_{M}\otimes W_{\eta_{1}^{3}%
})(h_{(\gamma_{1},\gamma_{2},\gamma_{3},\gamma_{4})}^{3})\\
g_{(\gamma_{3},\gamma_{4})}^{3}  &  =(\mathrm{id}_{M}\otimes W_{\eta_{2}^{3}%
})(h_{(\gamma_{1},\gamma_{2},\gamma_{3},\gamma_{4})}^{3})
\end{align*}
Therefore we have
\begin{align*}
\gamma_{2}\overset{\cdot}{\underset{3}{-}}\gamma_{1}  &  =(\mathrm{id}%
_{M}\otimes W_{(d_{1},d_{2})\in D^{2}\mapsto(0,0,d_{1},d_{2})\in
D^{4}\{(1,4),(2,4)\}})(g_{(\gamma_{1},\gamma_{2})}^{3})\\
&  =(\mathrm{id}_{M}\otimes W_{(d_{1},d_{2})\in D^{2}\mapsto(0,0,d_{1}%
,d_{2})\in D^{4}\{(1,4),(2,4)\}})\circ(\mathrm{id}_{M}\otimes W_{\eta_{1}^{3}%
})(h_{(\gamma_{1},\gamma_{2},\gamma_{3},\gamma_{4})}^{3})\\
&  =(\mathrm{id}_{M}\otimes W_{(d_{1},d_{2})\in D^{2}\mapsto(0,0,d_{1}%
,0,0,d_{2},0)\in E[3]})(h_{(\gamma_{1},\gamma_{2},\gamma_{3},\gamma_{4})}%
^{3})\\
\gamma_{4}\overset{\cdot}{\underset{3}{-}}\gamma_{3}  &  =(\mathrm{id}%
_{M}\otimes W_{(d_{1},d_{2})\in D^{2}\mapsto(0,0,d_{1},d_{2})\in
D^{4}\{(1,4),(2,4)\}})(g_{(\gamma_{3},\gamma_{4})}^{3})\\
&  =(\mathrm{id}_{M}\otimes W_{(d_{1},d_{2})\in D^{2}\mapsto(0,0,d_{1}%
,d_{2})\in D^{4}\{(1,4),(2,4)\}})\circ(\mathrm{id}_{M}\otimes W_{\eta_{2}^{3}%
})(h_{(\gamma_{1},\gamma_{2},\gamma_{3},\gamma_{4})}^{3})\\
&  =(\mathrm{id}_{M}\otimes W_{(d_{1},d_{2})\in D^{2}\mapsto(0,0,d_{1}%
,0,0,d_{2},d_{1}d_{2})\in E[3]})(h_{(\gamma_{1},\gamma_{2},\gamma_{3}%
,\gamma_{4})}^{3})
\end{align*}
Thus we have
\[
g_{(\gamma_{2}\overset{\cdot}{\underset{3}{-}}\gamma_{1},\gamma_{4}%
\overset{\cdot}{\underset{3}{-}}\gamma_{3})}=(\mathrm{id}_{M}\otimes
W_{(d_{1},d_{2},d_{3})\in D^{3}\{(1,3),(2,3)\}\mapsto(0,0,d_{1},0,0,d_{2}%
,d_{3})\in E[3]})(h_{(\gamma_{1},\gamma_{2},\gamma_{3},\gamma_{4})}^{3})
\]
Finally we have
\begin{align*}
&  (\gamma_{4}\overset{\cdot}{\underset{3}{-}}\gamma_{3})\overset{\cdot}%
{-}(\gamma_{2}\overset{\cdot}{\underset{3}{-}}\gamma_{1})\\
&  =(\mathrm{id}_{M}\otimes W_{d\in D\mapsto(0,0,d)\in D^{3}\{(1,3),(2,3)\}}%
)(g_{(\gamma_{2}\overset{\cdot}{\underset{3}{-}}\gamma_{1},\gamma_{4}%
\overset{\cdot}{\underset{3}{-}}\gamma_{3})})\\
&  =(\mathrm{id}_{M}\otimes W_{d\in D\mapsto(0,0,d)\in D^{3}\{(1,3),(2,3)\}}%
)\circ(\mathrm{id}_{M}\otimes W_{(d_{1},d_{2},d_{3})\in D^{3}%
\{(1,3),(2,3)\}\mapsto(0,0,d_{1},0,0,d_{2},d_{3})\in E[3]})\\
&  (h_{(\gamma_{1},\gamma_{2},\gamma_{3},\gamma_{4})}^{3})\\
&  =(\mathrm{id}_{M}\otimes W_{d\in D\mapsto(0,0,0,0,0,0,d)\in E[3]}%
)(h_{(\gamma_{1},\gamma_{2},\gamma_{3},\gamma_{4})}^{3})
\end{align*}

\end{remark}

Now we come to the crucial step in the proof of the general Jacobi identity.

\begin{theorem}
\label{t5.12}The diagram
\[%
\begin{array}
[c]{ccccccc}
& \mathrm{id}_{M}\otimes W_{h_{12}^{1}} &  & M\otimes W_{E[1]} &  &
\mathrm{id}_{M}\otimes W_{h_{31}^{1}} & \\
&  & \swarrow & \uparrow & \searrow &  & \\
& M\otimes W_{D^{3}\oplus D^{3}} &  & M\otimes W_{G} &  & M\otimes
W_{D^{3}\oplus D^{3}} & \\
\mathrm{id}_{M}\otimes W_{h_{12}^{2}} & \uparrow & \swarrow &  & \searrow &
\uparrow & \mathrm{id}_{M}\otimes W_{h_{31}^{3}}\\
& M\otimes W_{E[2]} &  &  &  & M\otimes W_{E[3]} & \\
&  & \searrow &  & \swarrow &  & \\
& \mathrm{id}_{M}\otimes W_{h_{23}^{2}} &  & M\otimes W_{D^{3}\oplus D^{3}} &
& \mathrm{id}_{M}\otimes W_{h_{23}^{3}} &
\end{array}
\]
is a limit diagram with the three unnamed arrows being
\begin{align*}
\mathrm{id}_{M}\otimes W_{k_{1}}  &  :M\otimes W_{G}\rightarrow M\otimes
W_{E[1]}\\
\mathrm{id}_{M}\otimes W_{k_{2}}  &  :M\otimes W_{G}\rightarrow M\otimes
W_{E[2]}\\
\mathrm{id}_{M}\otimes W_{k_{3}}  &  :M\otimes W_{G}\rightarrow M\otimes
W_{E[3]}%
\end{align*}
where the putative object $G$ is
\begin{align*}
&  D^{8}%
\{(2,4),(3,4),(1,5),(3,5),(1,6),(2,6),(4,5),(4,6),(5,6),(1,7),(2,7),(3,7),\\
&  (4,7),(5,7),(6,7),(1,8),(2,8),(3,8),(4,8),(5,8),(6,8),(7,8)\}\text{,}%
\end{align*}
the putative mapping $k_{1}:E[1]\rightarrow G$ is
\begin{align*}
(d_{1},d_{2},d_{3},d_{4},d_{5},d_{6},d_{7})  &  \in E[1]\mapsto\\
(d_{1},d_{2}+d_{4},d_{3}+d_{5},d_{6}-d_{2}d_{3}-d_{4}d_{5},-d_{1}d_{5}%
,d_{1}d_{4},d_{7}+d_{1}d_{2}d_{3},d_{1}d_{2}d_{3})  &  \in G\text{,}%
\end{align*}
the putative mapping $k_{2}:E[2]\rightarrow G$ is
\begin{align*}
(d_{1},d_{2},d_{3},d_{4},d_{5},d_{6},d_{7})  &  \in E[2]\mapsto\\
(d_{1}+d_{5},d_{2},d_{3}+d_{4},-d_{2}d_{3},d_{6}-d_{1}d_{3}-d_{4}d_{5}%
,d_{1}d_{2},d_{2}d_{4}d_{5},d_{7})  &  \in G\text{,}%
\end{align*}
the putative mapping $k_{3}:E[3]\rightarrow G$ is
\begin{align*}
(d_{1},d_{2},d_{3},d_{4},d_{5},d_{6},d_{7})  &  \in E[3]\mapsto\\
(d_{1}+d_{4},d_{2}+d_{5},d_{3},-d_{4}d_{5},-d_{1}d_{3},d_{6},-d_{7}%
,-d_{7}+d_{3}d_{4}d_{5})  &  \in G\text{,}%
\end{align*}
the putative mapping $h_{12}^{1}$ is $\iota_{2}^{1}\oplus\iota_{3}^{1}$, the
putative mapping $h_{12}^{2}$ is $\iota_{4}^{2}\oplus\iota_{1}^{2}$, the
putative mapping $h_{23}^{2}$ is $\iota_{2}^{2}\oplus\iota_{3}^{2}$, the
putative mapping $h_{23}^{3}$ is $\iota_{4}^{3}\oplus\iota_{1}^{3}$, the
putative mapping $h_{31}^{3}$ is $\iota_{2}^{3}\oplus\iota_{3}^{3}$, and the
putative mapping $h_{31}^{1}$ is $\iota_{4}^{1}\oplus\iota_{1}^{2} $.
\end{theorem}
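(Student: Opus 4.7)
The plan is to mirror exactly the strategy used for Theorem \ref{t5.3}: transfer the entire question from Fr\"{o}licher spaces to Weil algebras via the microlinearity of $M$, and then verify the resulting diagram of Weil algebras is a limit diagram by a direct computation with polynomial representatives.

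First I would state (and prove, as a lemma analogous to Lemma \ref{t5.4}) that the diagram obtained by stripping away the outer $M \otimes (\cdot)$ from every node and arrow,
\[
\begin{array}[c]{ccccccc}
& W_{h_{12}^{1}} &  & W_{E[1]} &  & W_{h_{31}^{1}} & \\
&  & \swarrow & \uparrow & \searrow &  & \\
& W_{D^{3}\oplus D^{3}} &  & W_{G} &  & W_{D^{3}\oplus D^{3}} & \\
W_{h_{12}^{2}} & \uparrow & \swarrow &  & \searrow & \uparrow & W_{h_{31}^{3}}\\
& W_{E[2]} &  &  &  & W_{E[3]} & \\
&  & \searrow &  & \swarrow &  & \\
& W_{h_{23}^{2}} &  & W_{D^{3}\oplus D^{3}} &  & W_{h_{23}^{3}} &
\end{array}
\]
(with the three unnamed arrows being $W_{k_i}$) is a limit diagram of Weil algebras. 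Once this is established, microlinearity of $M$ immediately produces the conclusion for the tensored diagram, exactly in the manner of Theorem \ref{t5.3}.

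The computation at the Weil-algebra level would proceed as follows. A general element of $W_{E[i]}$ is a polynomial in seven indeterminates $X_1,\dots,X_7$ modulo the nilpotency relations determining $E[i]$; by inspection the only surviving monomials are $1$, each $X_j$, the mixed products allowed by the simplicial condition (notably $X_2X_3$ in $E[1]$, $X_1X_3$ in $E[2]$, $X_1X_2$ in $E[3]$), and $X_6$, $X_7$. Similarly, one tabulates the surviving monomials of $W_G$; the defining set of forbidden sequences restricts products drastically, and the only mixed monomials present are $X_4X_5$, $X_5X_6$, $X_4X_6$, $X_1X_2X_3$ and a handful of cubic degree monomials involving $X_1,X_2,X_3$ paired with one of $X_4,X_5,X_6$. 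Next, I would read off the three natural $W_{D^3\oplus D^3}$-valued restrictions from each of $W_{E[1]}, W_{E[2]}, W_{E[3]}$, match them pairwise through the $h_{ij}^k$'s, and write out the linear system that a compatible triple must satisfy.

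The main obstacle is purely bookkeeping: verifying that for any compatible triple $(\alpha_1,\alpha_2,\alpha_3) \in W_{E[1]} \times W_{E[2]} \times W_{E[3]}$ agreeing on all three $W_{D^3 \oplus D^3}$ faces, there exists a unique $\beta \in W_G$ with $W_{k_i}(\beta) = \alpha_i$. The polynomial maps $k_1,k_2,k_3$ are deliberately chosen so that when one substitutes them into the general form of $\beta$, the coefficients of $\beta$ are uniquely determined by the coefficients of the $\alpha_i$: the ``linear part'' in each $X_j$ is pinned down by the compatibility conditions (exactly as in Lemma \ref{t5.4}), and the three extra ``correction'' monomials ($X_6$, $X_7$ of $E[i]$, together with the cubic $d_1d_2d_3$ terms appearing in $\iota_4^i$) are absorbed by the three independent coefficients of $X_6,X_7,X_8$ in $\beta$. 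I expect no conceptual difficulty beyond the tedium of checking that the three polynomials $k_1,k_2,k_3$ have been arranged so that this assignment is both well defined and surjective onto all compatible triples. Once this linear-algebraic verification is carried out, invoking microlinearity finishes the proof.
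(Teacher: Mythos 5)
Your overall strategy is exactly the paper's: the paper derives Theorem \ref{t5.12} from Lemma \ref{t5.13}, which asserts that the corresponding diagram of Weil algebras is a limit diagram, proves that lemma by writing out general polynomial representatives of $W_{E[1]}$, $W_{E[2]}$, $W_{E[3]}$, $W_{G}$ and matching coefficients, and then tensors with $M$ using microlinearity. So there is no divergence of method. However, the computational preview you give --- and for this theorem the computation \emph{is} the proof --- contains concrete errors that would derail the bookkeeping if carried out as described. First, your list of surviving mixed monomials of $W_{G}$ is wrong: the pairs $(4,5)$, $(4,6)$, $(5,6)$ all belong to the defining set of $G$, so $X_{4}X_{5}$, $X_{4}X_{6}$, $X_{5}X_{6}$ are all zero in $W_{G}$; the quadratic monomials that actually survive are $X_{1}X_{2}$, $X_{1}X_{3}$, $X_{1}X_{4}$, $X_{2}X_{3}$, $X_{2}X_{5}$, $X_{3}X_{6}$, and likewise the cubics of the form $X_{i}X_{j}X_{k}$ with one index among $\{4,5,6\}$ are all annihilated.

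Second, and more importantly, your claim that the ``three extra correction monomials'' are ``absorbed by the three independent coefficients of $X_{6},X_{7},X_{8}$'' misses the crux of the lemma. In the paper's computation the $X_{6}$-coefficients $a_{6}^{1},a_{6}^{2},a_{6}^{3}$ of the three $\gamma_{i}$ are accommodated by the coordinates $X_{4},X_{5},X_{6}$ of $G$, so only the \emph{two} coordinates $X_{7},X_{8}$ remain to account for the \emph{three} coefficients $a_{7}^{1},a_{7}^{2},a_{7}^{3}$. Existence of the amalgamating $\gamma\in W_{G}$ therefore requires deriving the relation $a_{7}^{1}+a_{7}^{2}+a_{7}^{3}=0$ from the face-compatibility conditions (equation (\ref{5.23}) in the paper), together with the consequent redundancy among conditions (\ref{5.4}), (\ref{5.8}), (\ref{5.12}). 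This forced linear relation is the entire mathematical content of the theorem --- it is precisely what becomes the vanishing of the sum of the three strong differences, i.e.\ the general Jacobi identity --- and a proof that treats the last coordinates as three independent free slots would not close. You should make this derivation explicit rather than expecting the coefficients to match up one-to-one.
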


The proof of the above theorem follows directly from the following lemma.

\begin{lemma}
\label{t5.13}The following diagram is a limit diagram of Weil algebras:
\[%
\begin{array}
[c]{ccccccc}
& W_{h_{12}^{1}} &  & W_{E[1]} &  & W_{h_{31}^{1}} & \\
&  & \swarrow & \uparrow & \searrow &  & \\
& W_{D^{3}\oplus D^{3}} &  & W_{G} &  & W_{D^{3}\oplus D^{3}} & \\
W_{h_{12}^{2}} & \uparrow & \swarrow &  & \searrow & \uparrow & W_{h_{31}^{3}%
}\\
& W_{E[2]} &  &  &  & W_{E[3]} & \\
&  & \searrow &  & \swarrow &  & \\
& W_{h_{23}^{2}} &  & W_{D^{3}\oplus D^{3}} &  & W_{h_{23}^{3}} &
\end{array}
\]

\end{lemma}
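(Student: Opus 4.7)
The plan is to reduce the lemma to a direct polynomial computation, in the spirit of Lemma \ref{t5.4}. Each Weil algebra appearing in the diagram is finite-dimensional, presented by a handful of generators modulo an explicit monomial ideal (squares together with the listed forbidden pairs), so the whole limit question becomes a finite linear-algebra problem. Concretely, a cone over the six-arrow subdiagram is a triple $(\gamma_{1}, \gamma_{2}, \gamma_{3}) \in W_{E[1]} \times W_{E[2]} \times W_{E[3]}$ satisfying the three pairwise compatibility equations along the three $W_{D^{3} \oplus D^{3}}$-nodes, and the task is to show that these triples are parameterized exactly by $W_{G}$ via $(W_{k_{1}}, W_{k_{2}}, W_{k_{3}})$.

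First I would write down explicit monomial bases. Each $W_{E[j]}$ has seven generators, and a monomial survives iff it is squarefree and contains no forbidden pair from the index set in the definition of $E[j]$. The algebra $W_{G}$ has eight generators subject to $22$ forbidden pairs (with $Y_{7}$ and $Y_{8}$ both behaving as isolated nilpotents that annihilate every other generator), and $W_{D^{3} \oplus D^{3}}$ has six generators with the nine cross-products $Z_{i} Z_{j+3}$ vanishing. Next I would compute how each of the six $h$-arrows, each of the form $\iota_{a}^{j} \oplus \iota_{b}^{j}$, acts on the chosen basis by composing the two polynomial substitutions and reducing modulo the target ideal, and similarly expand $W_{k_{1}}, W_{k_{2}}, W_{k_{3}}$.

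Existence of the cone, i.e.\ the commutativity of the three squares $W_{h_{12}^{1}} \circ W_{k_{1}} = W_{h_{12}^{2}} \circ W_{k_{2}}$ and the two analogues, is then a direct substitution check on each generator of the respective $W_{D^{3} \oplus D^{3}}$. Universality is the converse: given a compatible cone, I would match coefficients monomial by monomial on each $W_{D^{3} \oplus D^{3}}$, which expresses each coefficient of $\gamma_{j}$ in terms of coefficients of the other two, and show that the joint solution space has dimension $\dim W_{G}$ with an explicit basis realized by pushing $W_{G}$-monomials through the $W_{k_{j}}$.

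The principal obstacle is combinatorial accounting rather than conceptual difficulty. The delicate point is the coefficient of the top-degree monomial $Y_{8}$ (which, in each $E[j]$, corresponds to the cubic $d_{1} d_{2} d_{3}$ appearing at the last coordinate of $\iota_{4}^{j}$): one must verify that the three candidate values read from $\gamma_{1}, \gamma_{2}, \gamma_{3}$ are all forced to agree once the lower-degree compatibilities are in place. This agreement, which is ultimately what justifies the asymmetric definitions of $k_{1}, k_{2}, k_{3}$, is the only non-routine step; the rest of the verification is bookkeeping on the monomial basis.
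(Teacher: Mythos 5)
Your plan coincides with the paper's own proof: the paper writes out the general elements of $W_{E[1]}$, $W_{E[2]}$, $W_{E[3]}$ and $W_{G}$ as explicit polynomials on the surviving monomial basis, translates each compatibility equation over the three copies of $W_{D^{3}\oplus D^{3}}$ into a system of linear relations among the coefficients, and exhibits the unique $\gamma\in W_{G}$ realizing any compatible triple. The one non-routine step you flag is indeed where the paper spends its effort — there the three cubic/top-order relations are shown to force $a_{7}^{1}+a_{7}^{2}+a_{7}^{3}=0$ and to make the remaining identification $a_{345}^{3}=a_{123}^{1}$ derivable rather than an independent constraint — so your outline is the same argument in summary form.
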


\begin{proof}
Let $\gamma_{1}\in W_{E[1]}$, $\gamma_{2}\in W_{E[2]}$, $\gamma_{3}\in
W_{E[3]}$ and $\gamma\in W_{G}$ so that they are polynomials with real
coefficients in the following forms:
\begin{align*}
&  \gamma_{1}(X_{1},X_{2},X_{3},X_{4},X_{5},X_{6},X_{7})\\
&  =a^{1}+a_{1}^{1}X_{1}+a_{2}^{1}X_{2}+a_{3}^{1}X_{3}+a_{4}^{1}X_{4}%
+a_{5}^{1}X_{5}+a_{6}^{1}X_{6}+a_{7}^{1}X_{7}+a_{12}^{1}X_{1}X_{2}+a_{13}%
^{1}X_{1}X_{3}+\\
&  a_{14}^{1}X_{1}X_{4}+a_{15}^{1}X_{1}X_{5}+a_{16}^{1}X_{1}X_{6}+a_{23}%
^{1}X_{2}X_{3}+a_{45}^{1}X_{4}X_{5}+a_{123}^{1}X_{1}X_{2}X_{3}+a_{145}%
^{1}X_{1}X_{4}X_{5}%
\end{align*}
\begin{align*}
&  \gamma_{2}(X_{1},X_{2},X_{3},X_{4},X_{5},X_{6},X_{7})\\
&  =a^{2}+a_{1}^{2}X_{1}+a_{2}^{2}X_{2}+a_{3}^{2}X_{3}+a_{4}^{2}X_{4}%
+a_{5}^{2}X_{5}+a_{6}^{2}X_{6}+a_{7}^{2}X_{7}+a_{12}^{2}X_{1}X_{2}+a_{13}%
^{2}X_{1}X_{3}+\\
&  a_{23}^{2}X_{2}X_{3}+a_{24}^{2}X_{2}X_{4}+a_{25}^{2}X_{2}X_{5}+a_{26}%
^{2}X_{2}X_{6}+a_{45}^{2}X_{4}X_{5}+a_{123}^{2}X_{1}X_{2}X_{3}+a_{245}%
^{2}X_{2}X_{4}X_{5}%
\end{align*}

\begin{align*}
&  \gamma_{3}(X_{1},X_{2},X_{3},X_{4},X_{5},X_{6},X_{7})\\
&  =a^{3}+a_{1}^{3}X_{1}+a_{2}^{3}X_{2}+a_{3}^{3}X_{3}+a_{4}^{3}X_{4}%
+a_{5}^{3}X_{5}+a_{6}^{3}X_{6}+a_{7}^{3}X_{7}+a_{12}^{3}X_{1}X_{2}+a_{13}%
^{3}X_{1}X_{3}+\\
&  a_{23}^{3}X_{2}X_{3}+a_{34}^{3}X_{3}X_{4}+a_{35}^{3}X_{3}X_{5}+a_{36}%
^{3}X_{3}X_{6}+a_{45}^{3}X_{4}X_{5}+a_{123}^{3}X_{1}X_{2}X_{3}+a_{345}%
^{3}X_{3}X_{4}X_{5}%
\end{align*}
\begin{align*}
&  \gamma(X_{1},X_{2},X_{3},X_{4},X_{5},X_{6},X_{7},X_{8})\\
&  =b+b_{1}X_{1}+b_{2}X_{2}+b_{3}X_{3}+b_{4}X_{4}+b_{5}X_{5}+b_{6}X_{6}%
+b_{7}X_{7}+b_{8}X_{8}+b_{12}X_{1}X_{2}\\
&  +b_{13}X_{1}X_{3}+b_{14}X_{1}X_{4}+b_{23}X_{2}X_{3}+b_{25}X_{2}X_{5}%
+b_{36}X_{3}X_{6}%
\end{align*}
It is easy to see that
\begin{align*}
&  W_{h_{12}^{1}}(\gamma_{1})(X_{1},X_{2},X_{3},X_{4},X_{5},X_{6})\\
&  =a^{1}+a_{1}^{1}X_{1}+a_{2}^{1}X_{2}+a_{3}^{1}X_{3}+a_{6}^{1}X_{2}%
X_{3}+a_{12}^{1}X_{1}X_{2}+a_{13}^{1}X_{1}X_{3}+a_{16}^{1}X_{1}X_{2}%
X_{3}+a_{23}^{1}X_{2}X_{3}\\
&  +a_{123}^{1}X_{1}X_{2}X_{3}+a_{1}^{1}X_{4}+a_{4}^{1}X_{5}+a_{5}^{1}%
X_{6}+a_{14}^{1}X_{4}X_{5}+a_{15}^{1}X_{4}X_{6}+a_{45}^{1}X_{5}X_{6}\\
&  +a_{145}^{1}X_{4}X_{5}X_{6}\\
&  =a^{1}+a_{1}^{1}X_{1}+a_{2}^{1}X_{2}+a_{3}^{1}X_{3}+a_{12}^{1}X_{1}%
X_{2}+a_{13}^{1}X_{1}X_{3}+(a_{6}^{1}+a_{23}^{1})X_{2}X_{3}\\
&  +(a_{16}^{1}+a_{123}^{1})X_{1}X_{2}X_{3}+a_{1}^{1}X_{4}+a_{4}^{1}%
X_{5}+a_{5}^{1}X_{6}+a_{14}^{1}X_{4}X_{5}+a_{15}^{1}X_{4}X_{6}+a_{45}^{1}%
X_{5}X_{6}\\
&  +a_{145}^{1}X_{4}X_{5}X_{6}%
\end{align*}
\begin{align*}
&  W_{h_{12}^{2}}(\gamma_{2})(X_{1},X_{2},X_{3},X_{4},X_{5},X_{6})\\
&  =a^{2}+a_{2}^{2}X_{2}+a_{4}^{2}X_{3}+a_{5}^{2}X_{1}+a_{6}^{2}X_{1}%
X_{3}+a_{7}^{2}X_{1}X_{2}X_{3}+a_{24}^{2}X_{2}X_{3}+a_{25}^{2}X_{1}X_{2}\\
&  +a_{26}^{2}X_{1}X_{2}X_{3}+a_{45}^{2}X_{1}X_{3}+a_{245}^{2}X_{1}X_{2}%
X_{3}+a_{1}^{2}X_{4}+a_{2}^{2}X_{5}+a_{3}^{2}X_{6}+a_{12}^{2}X_{4}X_{5}\\
&  +a_{13}^{2}X_{4}X_{6}+a_{23}^{2}X_{5}X_{6}+a_{123}^{2}X_{4}X_{5}X_{6}\\
&  =a^{2}+a_{5}^{2}X_{1}+a_{2}^{2}X_{2}+a_{4}^{2}X_{3}+a_{25}^{2}X_{1}%
X_{2}+(a_{6}^{2}+a_{45}^{2})X_{1}X_{3}+a_{24}^{2}X_{2}X_{3}+\\
&  (a_{7}^{2}+a_{26}^{2}+a_{245}^{2})X_{1}X_{2}X_{3}+a_{1}^{2}X_{4}+a_{2}%
^{2}X_{5}+a_{3}^{2}X_{6}+a_{12}^{2}X_{4}X_{5}+a_{13}^{2}X_{4}X_{6}+a_{23}%
^{2}X_{5}X_{6}\\
&  +a_{123}^{2}X_{4}X_{5}X_{6}%
\end{align*}
Therefore the condition that $W_{h_{12}^{1}}(\gamma_{1})=W_{h_{12}^{2}}%
(\gamma_{2})$ is equivalent to the following conditions as a whole:
\begin{align}
a^{1}  &  =a^{2}\label{5.1}\\
a_{1}^{1}  &  =a_{5}^{2},\;a_{2}^{1}=a_{2}^{2},\;a_{3}^{1}=a_{4}^{2}%
,\;a_{1}^{1}=a_{1}^{2},\;a_{4}^{1}=a_{2}^{2},\;a_{5}^{1}=a_{3}^{2}%
\label{5.2}\\
a_{12}^{1}  &  =a_{25}^{2},\;a_{13}^{1}=a_{6}^{2}+a_{45}^{2},\;a_{6}%
^{1}+a_{23}^{1}=a_{24}^{2},\;a_{14}^{1}=a_{12}^{2},\;a_{15}^{1}=a_{13}%
^{2},\;\nonumber\\
a_{45}^{1}  &  =a_{23}^{2}\label{5.3}\\
a_{16}^{1}+a_{123}^{1}  &  =a_{7}^{2}+a_{26}^{2}+a_{245}^{2},\;a_{145}%
^{1}=a_{123}^{2}\label{5.4}%
\end{align}
By the same token, the condition that $W_{h_{23}^{2}}(\gamma_{2}%
)=W_{h_{23}^{3}}(\gamma_{3})$ is equivalent to the following conditions as a
whole:
\begin{align}
a^{2}  &  =a^{3}\label{5.5}\\
a_{2}^{2}  &  =a_{5}^{3},\;a_{3}^{2}=a_{3}^{3},\;a_{1}^{2}=a_{4}^{3}%
,\;a_{2}^{2}=a_{2}^{3},\;a_{4}^{2}=a_{3}^{3},\;a_{5}^{2}=a_{1}^{3}%
\label{5.6}\\
a_{23}^{2}  &  =a_{35}^{3},\;a_{12}^{2}=a_{6}^{3}+a_{45}^{3},\;a_{6}%
^{2}+a_{13}^{2}=a_{34}^{3},\;a_{24}^{2}=a_{23}^{3},\;a_{25}^{2}=a_{12}%
^{3},\;\nonumber\\
a_{45}^{2}  &  =a_{13}^{3}\label{5.7}\\
a_{26}^{2}+a_{123}^{2}  &  =a_{7}^{3}+a_{36}^{3}+a_{345}^{3},\;a_{245}%
^{2}=a_{123}^{3}\label{5.8}%
\end{align}
By the same token again, the condition that $W_{h_{31}^{3}}(\gamma
_{3})=W_{h_{31}^{1}}(\gamma_{1})$ is equivalent to the following conditions as
a whole:
\begin{align}
a^{3}  &  =a^{1}\label{5.9}\\
a_{3}^{3}  &  =a_{5}^{1},\;a_{1}^{3}=a_{1}^{1},\;a_{2}^{3}=a_{4}^{1}%
,\;a_{3}^{3}=a_{3}^{1},\;a_{4}^{3}=a_{1}^{1},\;a_{5}^{3}=a_{2}^{1}%
\label{5.10}\\
a_{13}^{3}  &  =a_{15}^{1},\;a_{23}^{3}=a_{6}^{1}+a_{45}^{1},\;a_{6}%
^{3}+a_{12}^{3}=a_{14}^{1},\;a_{34}^{3}=a_{13}^{1},\;a_{35}^{3}=a_{23}%
^{1},\;\nonumber\\
a_{45}^{3}  &  =a_{12}^{1}\label{5.11}\\
a_{36}^{3}+a_{123}^{3}  &  =a_{7}^{1}+a_{16}^{1}+a_{145}^{1},\;a_{345}%
^{3}=a_{123}^{1}\label{5.12}%
\end{align}
The three conditions (\ref{5.1}), (\ref{5.5}) and (\ref{5.9}) can be combined
into
\begin{equation}
a^{1}=a^{2}=a^{3}\label{5.13}%
\end{equation}
The three conditions (\ref{5.2}), (\ref{5.6}) and (\ref{5.10}) are to be
superseded by the following three conditions as a whole:
\begin{align}
a_{1}^{1}  &  =a_{1}^{2}=a_{1}^{3}=a_{5}^{2}=a_{4}^{3}\label{5.14}\\
a_{2}^{1}  &  =a_{2}^{2}=a_{2}^{3}=a_{4}^{1}=a_{5}^{3}\label{5.15}\\
a_{3}^{1}  &  =a_{3}^{2}=a_{3}^{3}=a_{5}^{1}=a_{4}^{2}\label{5.16}%
\end{align}
The three conditions (\ref{5.3}), (\ref{5.7}) and (\ref{5.11}) are equivalent
to the following six conditions as a whole:
\begin{align}
a_{12}^{1}  &  =a_{12}^{2}=a_{12}^{3}\label{5.17}\\
a_{13}^{1}  &  =a_{13}^{2}=a_{13}^{3}\label{5.18}\\
a_{23}^{1}  &  =a_{23}^{2}=a_{23}^{3}\label{5.19}\\
a_{14}^{1}  &  =a_{12}^{1}+a_{6}^{3},\;a_{15}^{1}=a_{13}^{1}-a_{6}%
^{2},\;a_{45}^{1}=a_{23}^{1}\label{5.20}\\
a_{24}^{2}  &  =a_{23}^{2}+a_{6}^{1},\;a_{25}^{2}=a_{12}^{2}-a_{6}%
^{3},\;a_{45}^{2}=a_{13}^{2}\label{5.21}\\
a_{34}^{3}  &  =a_{13}^{3}+a_{6}^{2},\;a_{35}^{3}=a_{23}^{3}-a_{6}%
^{1},\;a_{45}^{3}=a_{12}^{3}\label{5.22}%
\end{align}
The conditions (\ref{5.4}), (\ref{5.8}) and (\ref{5.12}) imply that
\begin{align}
&  a_{7}^{1}+a_{7}^{2}+a_{7}^{3}\nonumber\\
&  =(a_{36}^{3}+a_{123}^{3}-a_{16}^{1}-a_{145}^{1})+(a_{16}^{1}+a_{123}%
^{1}-a_{26}^{2}-a_{245}^{2})+(a_{26}^{2}+a_{123}^{2}-a_{36}^{3}-a_{345}%
^{3})\nonumber\\
&  =(a_{36}^{3}+a_{123}^{3}-a_{16}^{1}-a_{123}^{2})+(a_{16}^{1}+a_{123}%
^{1}-a_{26}^{2}-a_{123}^{3})+(a_{26}^{2}+a_{123}^{2}-a_{36}^{3}-a_{123}%
^{1})\nonumber\\
&  =0\label{5.23}%
\end{align}
Therefore the three conditions (\ref{5.4}), (\ref{5.8}) and (\ref{5.12}) are
to be replaced by the following five conditions as a whole:
\begin{align}
a_{145}^{1}-a_{123}^{1}  &  =a_{7}^{3}+a_{36}^{3}-a_{26}^{2}\label{5.24}\\
a_{245}^{2}-a_{123}^{2}  &  =a_{7}^{1}+a_{16}^{1}-a_{36}^{3}\label{5.25}\\
a_{345}^{3}-a_{123}^{3}  &  =a_{7}^{2}+a_{26}^{2}-a_{16}^{1}\label{5.26}\\
a_{145}^{1}  &  =a_{123}^{2},\;a_{245}^{2}=a_{123}^{3}\label{5.27}\\
a_{7}^{1}+a_{7}^{2}+a_{7}^{3}  &  =0\label{5.28}%
\end{align}
Indeed, the condition that $a_{345}^{3}=a_{123}^{1}$ is derivable from the
above five conditions, as is to be demonstrated in the following:
\begin{align*}
&  a_{345}^{3}\\
&  =a_{123}^{3}+a_{7}^{2}+a_{26}^{2}-a_{16}^{1}\text{ \ \ [(\ref{5.26})]}\\
&  =a_{245}^{2}+a_{7}^{2}+a_{26}^{2}-a_{16}^{1}\text{ \ \ [(\ref{5.27})]}\\
&  =a_{123}^{2}+a_{7}^{1}-a_{36}^{3}+a_{7}^{2}+a_{26}^{2}\text{
\ \ [(\ref{5.25})]}\\
&  =a_{145}^{1}+a_{7}^{1}-a_{36}^{3}+a_{7}^{2}+a_{26}^{2}\text{
\ \ [(\ref{5.27})]}\\
&  =a_{123}^{1}+a_{7}^{1}+a_{7}^{2}+a_{7}^{3}\text{ \ \ [(\ref{5.24})]}\\
&  =a_{123}^{1}\text{ \ \ [(\ref{5.28})]}%
\end{align*}
Now it is not difficult to see that $W_{h_{12}^{1}}(\gamma_{1})=W_{h_{12}^{2}%
}(\gamma_{2})$, $W_{h_{23}^{2}}(\gamma_{2})=W_{h_{23}^{3}}(\gamma_{3})$ and
$W_{h_{31}^{3}}(\gamma_{3})=W_{h_{31}^{1}}(\gamma_{1})$ exactly when there
exists $\gamma\in W_{G}$ with $\gamma_{i}=W_{k_{i}}(\gamma)$ ($i=1,2,3$), in
which $\gamma$ should uniquely be of the following form:
\begin{align*}
&  \gamma(X_{1},X_{2},X_{3},X_{4},X_{5},X_{6},X_{7},X_{8})\\
&  =a^{1}+a_{1}^{1}X_{1}+a_{2}^{1}X_{2}+a_{3}^{1}X_{3}+a_{6}^{1}X_{4}%
+a_{6}^{2}X_{5}+a_{6}^{3}X_{6}+a_{7}^{1}X_{7}+a_{7}^{2}X_{8}+a_{12}^{1}%
X_{1}X_{2}\\
&  +a_{13}^{1}X_{1}X_{3}+a_{16}^{1}X_{1}X_{4}+(a_{23}^{2}+a_{6}^{1})X_{2}%
X_{3}+a_{26}^{2}X_{2}X_{5}+a_{36}^{3}X_{3}X_{6}%
\end{align*}
This completes the proof of the theorem.
\end{proof}

\begin{corollary}
For any $\gamma_{123}$, $\gamma_{132}$, $\gamma_{213}$, $\gamma_{231}$,
$\gamma_{312}$, $\gamma_{321}\in M\otimes W_{D^{3}}$, if all expressions
(2.9)-(2.11) are well defined, then there exists unique $\gamma\in M\otimes
W_{G}$ such that
\begin{align*}
(\mathrm{id}_{M}\otimes W_{k_{1}})(\gamma)  &  =h_{(\gamma_{321},\gamma
_{231},\gamma_{132},\gamma_{123})}^{1}\\
(\mathrm{id}_{M}\otimes W_{k_{2}})(\gamma)  &  =h_{(\gamma_{132},\gamma
_{312},\gamma_{213},\gamma_{231})}^{2}\\
(\mathrm{id}_{M}\otimes W_{k_{3}})(\gamma)  &  =h_{(\gamma_{213},\gamma
_{123},\gamma_{321},\gamma_{312})}^{3}%
\end{align*}

\end{corollary}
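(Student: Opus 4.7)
The plan is to apply the universal property of the limit diagram of Theorem~\ref{t5.12}: to exhibit the unique $\gamma\in M\otimes W_{G}$ projecting to the three specified vertices, it suffices to prescribe elements $\alpha_{i}\in M\otimes W_{E[i]}$ $(i=1,2,3)$ and verify their pairwise compatibility along the three edges labelled $M\otimes W_{D^{3}\oplus D^{3}}$. We take
\[
\alpha_{1}=h^{1}_{(\gamma_{321},\gamma_{231},\gamma_{132},\gamma_{123})},\quad
\alpha_{2}=h^{2}_{(\gamma_{132},\gamma_{312},\gamma_{213},\gamma_{231})},\quad
\alpha_{3}=h^{3}_{(\gamma_{213},\gamma_{123},\gamma_{321},\gamma_{312})},
\]
each of which exists because the hypotheses that the three expressions (2.9)--(2.11) are well defined are precisely the compatibility conditions required by the corollaries following Propositions~\ref{t5.9},~\ref{t5.10},~\ref{t5.11}.

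The next step is to check the three edge-compatibilities. For the edge between $E[1]$ and $E[2]$, the two maps to $M\otimes W_{D^{3}\oplus D^{3}}$ are $\mathrm{id}_{M}\otimes W_{\iota_{2}^{1}\oplus\iota_{3}^{1}}$ and $\mathrm{id}_{M}\otimes W_{\iota_{4}^{2}\oplus\iota_{1}^{2}}$, so by the universal property of $\oplus$ it suffices to verify
\[
(\mathrm{id}_{M}\otimes W_{\iota_{2}^{1}})(\alpha_{1})=(\mathrm{id}_{M}\otimes W_{\iota_{4}^{2}})(\alpha_{2}),\qquad (\mathrm{id}_{M}\otimes W_{\iota_{3}^{1}})(\alpha_{1})=(\mathrm{id}_{M}\otimes W_{\iota_{1}^{2}})(\alpha_{2}).
\]
By the defining property of $h^{1}$ the left-hand sides equal $\gamma_{231}$ and $\gamma_{132}$ respectively, and by the defining property of $h^{2}$ the right-hand sides equal $\gamma_{231}$ and $\gamma_{132}$ respectively. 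The remaining two edges are checked identically: on the $E[2]$--$E[3]$ edge both sides recover $(\gamma_{312},\gamma_{213})$, and on the $E[3]$--$E[1]$ edge both sides recover $(\gamma_{123},\gamma_{321})$.

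Having verified compatibility, Theorem~\ref{t5.12} yields a unique $\gamma\in M\otimes W_{G}$ with $(\mathrm{id}_{M}\otimes W_{k_{i}})(\gamma)=\alpha_{i}$ for $i=1,2,3$, which is exactly the assertion of the corollary. The only real work is the index bookkeeping in checking the three edge-compatibilities, so there is no genuine obstacle once Theorem~\ref{t5.12} is in hand; the payoff, to be exploited in the next section, is that $\gamma$ simultaneously encodes all six $\gamma_{ijk}$'s through a single object of $M\otimes W_{G}$, which will let the three strong-difference expressions in Theorem~\ref{t5.8} be read off from $\gamma$ by post-composing with suitable putative mappings $D\to G$, so that their sum factors through the zero map and must vanish.
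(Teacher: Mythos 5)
Your proposal is correct and follows essentially the same route as the paper: the paper also verifies the three edge-compatibilities $(\mathrm{id}_{M}\otimes W_{h_{12}^{1}})(h^{1})=(\mathrm{id}_{M}\otimes W_{h_{12}^{2}})(h^{2})$, etc., by observing that both sides encode the pairs $(\gamma_{231},\gamma_{132})$, $(\gamma_{312},\gamma_{213})$, $(\gamma_{123},\gamma_{321})$, and then invokes the limit diagram of Theorem \ref{t5.12}. Your explicit componentwise check via the universal property of $\oplus$ just spells out what the paper leaves implicit.
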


\begin{remark}
This means that $\gamma$ encodes $\gamma_{123}$, $\gamma_{132}$, $\gamma
_{213}$, $\gamma_{231}$, $\gamma_{312}$ and $\gamma_{321}$. We can decode
$\gamma$, by way of example, into $\gamma_{123}$ via $\mathrm{id}_{M}\otimes
W_{k_{1}\circ\iota_{4}^{1}}$ or $\mathrm{id}_{M}\otimes W_{k_{3}\circ\iota
_{2}^{3}}$.
\end{remark}

\begin{proof}
(of the Corollary) Since
\begin{align*}
(\mathrm{id}_{M}\otimes W_{h_{12}^{1}})(h_{(\gamma_{321},\gamma_{231}%
,\gamma_{132},\gamma_{123})}^{1})  &  =l_{(\gamma_{231},\gamma_{132}%
)}=(\mathrm{id}_{M}\otimes W_{h_{12}^{2}})(h_{(\gamma_{132},\gamma
_{312},\gamma_{213},\gamma_{231})}^{2})\text{,}\\
(\mathrm{id}_{M}\otimes W_{h_{23}^{2}})(h_{(\gamma_{132},\gamma_{312}%
,\gamma_{213},\gamma_{231})}^{2})  &  =l_{(\gamma_{312},\gamma_{213}%
)}=(\mathrm{id}_{M}\otimes W_{h_{23}^{3}})(h_{(\gamma_{213},\gamma
_{123},\gamma_{321},\gamma_{312})}^{3})\text{ and}\\
(\mathrm{id}_{M}\otimes W_{h_{31}^{3}})(h_{(\gamma_{213},\gamma_{123}%
,\gamma_{321},\gamma_{312})}^{3})  &  =l_{(\gamma_{123},\gamma_{321}%
)}=(\mathrm{id}_{M}\otimes W_{h_{31}^{1}})(h_{(\gamma_{321},\gamma
_{231},\gamma_{132},\gamma_{123})}^{1})\text{,}%
\end{align*}
the desired conclusion follows directly from the above theorem.
\end{proof}

\begin{notation}
We will write $m_{(\gamma_{123},\gamma_{132},\gamma_{213},\gamma_{231}%
,\gamma_{312},\gamma_{321})}$ or $m\ $for short for the above $\gamma$.
\end{notation}

Once the above theorem is established, we can easily establish the general
Jacobi identity as follows:

\begin{proof}
(of the general Jacobi identity) Indeed we note that for any $d\in D$, we
have
\begin{align*}
&  (\gamma_{123}\overset{\cdot}{\underset{1}{-}}\gamma_{132})\overset{\cdot
}{-}(\gamma_{231}\overset{\cdot}{\underset{1}{-}}\gamma_{321})\\
&  =(\mathrm{id}_{M}\otimes W_{d\in D\mapsto(0,0,0,0,0,0,d)\in E[1]}%
)(h_{(\gamma_{321},\gamma_{231},\gamma_{132},\gamma_{123})}^{1})\\
&  =(\mathrm{id}_{M}\otimes W_{d\in D\mapsto(0,0,0,0,0,0,d)\in E[1]}%
)\circ(\mathrm{id}_{M}\otimes W_{k_{1}})(m)\\
&  =(\mathrm{id}_{M}\otimes W_{d\in D\mapsto(0,0,0,0,0,0,d,0)\in G})(m)\\
&  (\gamma_{231}\overset{\cdot}{\underset{2}{-}}\gamma_{213})\overset{\cdot
}{-}(\gamma_{312}\overset{\cdot}{\underset{2}{-}}\gamma_{132})\\
&  =(\mathrm{id}_{M}\otimes W_{d\in D\mapsto(0,0,0,0,0,0,d)\in E[2]}%
)(h_{(\gamma_{132},\gamma_{312},\gamma_{213},\gamma_{231})}^{2})\\
&  =(\mathrm{id}_{M}\otimes W_{d\in D\mapsto(0,0,0,0,0,0,d)\in E[2]}%
)\circ(\mathrm{id}_{M}\otimes W_{k_{2}})(m)\\
&  =(\mathrm{id}_{M}\otimes W_{d\in D\mapsto(0,0,0,0,0,0,0,d)\in G})(m)\\
&  (\gamma_{312}\overset{\cdot}{\underset{3}{-}}\gamma_{321})\overset{\cdot
}{-}(\gamma_{123}\overset{\cdot}{\underset{3}{-}}\gamma_{213})\\
&  =(\mathrm{id}_{M}\otimes W_{d\in D\mapsto(0,0,0,0,0,0,d)\in E[3]}%
)(h_{(\gamma_{213},\gamma_{123},\gamma_{321},\gamma_{312})}^{3})\\
&  =(\mathrm{id}_{M}\otimes W_{d\in D\mapsto(0,0,0,0,0,0,d)\in E[3]}%
)\circ(\mathrm{id}_{M}\otimes W_{k_{3}})(m)\\
&  =(\mathrm{id}_{M}\otimes W_{d\in D\mapsto(0,0,0,0,0,0,-d,-d)\in G})(m)
\end{align*}
Therefore, letting $t_{1}$, $t_{2}$ and $t_{3}$ denote the three expressions
in Theorem \ref{t5.8} in order, we have
\[
l_{(t_{1},t_{2},t_{3})}=(\mathrm{id}_{M}\otimes W_{(d_{1},d_{2},d_{3})\in
D(3)\mapsto(0,0,0,0,0,0,d_{1}-d_{3},d_{2}-d_{3})\in G})(m)
\]
This means that
\begin{align*}
&  t_{1}+t_{2}+t_{3}\\
&  =(\mathrm{id}_{M}\otimes W_{d\in D\mapsto(d,d,d)\in D(3)})(l_{(t_{1}%
,t_{2},t_{3})})\\
&  =(\mathrm{id}_{M}\otimes W_{d\in D\mapsto(d,d,d)\in D(3)})\circ
(\mathrm{id}_{M}\otimes W_{(d_{1},d_{2},d_{3})\in D(3)\mapsto
(0,0,0,0,0,0,d_{1}-d_{3},d_{2}-d_{3})\in G})(m)\\
&  =(\mathrm{id}_{M}\otimes W_{d\in D\mapsto(0,0,0,0,0,0,d-d,d-d)\in G})(m)\\
&  =(\mathrm{id}_{M}\otimes W_{d\in D\mapsto(0,0,0,0,0,0,0,0)\in G})(m)
\end{align*}
This completes the proof of the general Jacobi identity.
\end{proof}

\section{From the General Jacobi Identity to the Jacobi Identity of Lie
brackets}

\begin{theorem}
For any $X,Y\in\aleph(M)$, we have
\[
\lbrack X,Y]=Y\ast X\overset{\cdot}{-}(\mathrm{id}_{M^{M}}\otimes
W_{(d_{1},d_{2})\in D^{2}\mapsto(d_{2},d_{1})\in D^{2}})(X\ast Y)
\]

\end{theorem}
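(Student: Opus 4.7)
The plan is to define $Z := Y \ast X \overset{\cdot}{-} (\mathrm{id}_{M^{M}} \otimes W_{\tau})(X \ast Y)$, where $\tau: D^{2} \to D^{2}$ is the swap $(d_{1}, d_{2}) \mapsto (d_{2}, d_{1})$, and then identify $Z$ with $[X, Y]$ via the uniqueness clause of Theorem \ref{t4.6}. First I verify the strong difference makes sense: the bifunctoriality of $\otimes$, together with the observation that $\tau$ restricts to the swap on $D(2) \subset D^{2}$, reduces the compatibility condition $(\mathrm{id}_{M^{M}} \otimes W_{i_{D(2)}^{D^{2}}})(Y \ast X) = (\mathrm{id}_{M^{M}} \otimes W_{i_{D(2)}^{D^{2}}})((\mathrm{id}_{M^{M}} \otimes W_{\tau})(X \ast Y))$ to the identity $l_{(X, Y)} = l_{(Y, X)}$ of Proposition \ref{t4.5}. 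Evaluating at $(d_{1}, d_{2}) = (0, 0)$ then confirms $Z \in \aleph(M)$.

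Since $\mathrm{id}_{M^{M}} \otimes W_{(d_{1}, d_{2}) \mapsto d_{1} d_{2}}$ is the inclusion of an equalizer in the limit diagram appearing in the proof of Theorem \ref{t4.6} (hence a monomorphism), it suffices to verify that $Z$ satisfies the same defining equation as $[X, Y]$, namely
\[
(\mathrm{id}_{M^{M}} \otimes W_{(d_{1}, d_{2}) \mapsto (d_{1}, d_{2}, -d_{1}, -d_{2})})(Y \ast X \ast Y \ast X) = (\mathrm{id}_{M^{M}} \otimes W_{(d_{1}, d_{2}) \mapsto d_{1} d_{2}})(Z).
\]
Letting $g := g_{((\mathrm{id}_{M^{M}} \otimes W_{\tau})(X \ast Y),\, Y \ast X)} \in M^{M} \otimes W_{D^{3}\{(1, 3), (2, 3)\}}$ denote the encoding furnished by Proposition \ref{t5.1}, the definition of the strong difference recasts the right-hand side as $(\mathrm{id}_{M^{M}} \otimes W_{(d_{1}, d_{2}) \mapsto (0, 0, d_{1} d_{2})})(g)$.

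To close the identification I will construct a common ancestor $\Gamma \in M^{M} \otimes W_{F}$ for a suitably chosen Weil algebra $W_{F}$ admitting morphisms $\pi_{1}: F \to D^{4}$ and $\pi_{2}: F \to D^{3}\{(1, 3), (2, 3)\}$ with $(\mathrm{id}_{M^{M}} \otimes W_{\pi_{1}})(\Gamma) = Y \ast X \ast Y \ast X$ and $(\mathrm{id}_{M^{M}} \otimes W_{\pi_{2}})(\Gamma) = g$. Such a $\Gamma$ arises from a pullback of Weil algebras together with the microlinearity of $M^{M}$, in the style of Propositions \ref{t5.9}--\ref{t5.11}; both sides of the displayed equality then appear as $(\mathrm{id}_{M^{M}} \otimes W_{\lambda})(\Gamma)$ for two morphisms $\lambda: F \to D^{2}$, reducing the verification to a polynomial-coefficient check in the spirit of Lemma \ref{t5.13}. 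The main obstacle is pinning down $W_{F}$: the nilpotency relations of $D^{4}$ and $D^{3}\{(1, 3), (2, 3)\}$ must be woven together so that the resulting algebra accommodates both the commutator word $Y \ast X \ast Y \ast X$ and the strong-difference encoding $g$, with the ensuing polynomial identity encoding, at the Weil-algebra level, the synthetic-differential-geometric fact that $Y(d_{2}) X(d_{1}) p - X(d_{1}) Y(d_{2}) p = d_{1} d_{2} [X, Y](p)$.
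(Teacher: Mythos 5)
Your overall strategy---define $Z:=Y\ast X\overset{\cdot}{-}(\mathrm{id}_{M^{M}}\otimes W_{\tau})(X\ast Y)$, check well-definedness via Proposition \ref{t4.5}, and then identify $Z$ with $[X,Y]$ through the uniqueness clause of Theorem \ref{t4.6}---is a legitimate plan, and your reduction of the compatibility condition to $l_{(X,Y)}=l_{(Y,X)}$ is correct. But the proof has a genuine gap at exactly the point you flag yourself: the ``common ancestor'' $\Gamma\in M^{M}\otimes W_{F}$, the Weil algebra $F$, the morphisms $\pi_{1},\pi_{2},\lambda$, and the polynomial-coefficient verification are never produced. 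That construction is not a routine afterthought; it is the entire content of the verification that $(\mathrm{id}_{M^{M}}\otimes W_{(d_{1},d_{2})\mapsto(d_{1},d_{2},-d_{1},-d_{2})})(Y\ast X\ast Y\ast X)=(\mathrm{id}_{M^{M}}\otimes W_{(d_{1},d_{2})\mapsto(0,0,d_{1}d_{2})})(g)$, and without it nothing has been proved. Writing ``the main obstacle is pinning down $W_{F}$'' concedes that the central step is missing.

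The paper avoids this obstacle entirely by running the argument in the opposite direction: since $[X,Y]$ already exists by Theorem \ref{t4.6}, one can form the element $(\mathrm{id}_{M^{M}}\otimes W_{(d_{1},d_{2},d_{3})\in D^{3}\{(1,3),(2,3)\}\mapsto(d_{2},d_{3},d_{1})\in D^{3}})(X\ast[X,Y]\ast Y)$ and check directly that its images under $\mathrm{id}_{M}\otimes W_{\varphi}$ and $\mathrm{id}_{M}\otimes W_{\psi}$ are $(\mathrm{id}_{M^{M}}\otimes W_{\tau})(X\ast Y)$ and $Y\ast X$ respectively---the second computation uses the defining equation of $[X,Y]$ to replace $[X,Y](d_{1}d_{2})$ by the commutator word and then collapses the $X\ast X$ and $Y\ast Y$ factors by bifunctoriality. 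By the uniqueness in the corollary to Proposition \ref{t5.1}, this element \emph{is} $g_{((\mathrm{id}_{M^{M}}\otimes W_{\tau})(X\ast Y),\,Y\ast X)}$, and applying $\mathrm{id}_{M}\otimes W_{d\mapsto(0,0,d)}$ reads off the strong difference as $(\mathrm{id}_{M^{M}}\otimes W_{d\mapsto(0,d,0)})(X\ast[X,Y]\ast Y)=[X,Y]$. If you want to salvage your route, you must actually exhibit $F$ (weaving the relations of $D^{4}$ and $D^{3}\{(1,3),(2,3)\}$ as in Lemma \ref{t5.13}) and carry out the coefficient check; otherwise, adopt the paper's trick of using the already-constructed $[X,Y]$ to name the encoding element explicitly rather than hunting for a new limit diagram.
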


\begin{proof}
Let the putative mapping $i:D^{3}\{(1,3),(2,3)\}\rightarrow D^{3}$ be the
canonical embedding. Since
\begin{align*}
&  (\mathrm{id}_{M}\otimes W_{\varphi})(\mathrm{id}_{M^{M}}\otimes
W_{(d_{1},d_{2},d_{3})\in D^{3}\{(1,3),(2,3)\}\mapsto(d_{2},d_{3},d_{1})\in
D^{3}})(X\ast\lbrack X,Y]\ast Y)\\
&  =(\mathrm{id}_{M}\otimes W_{(d_{1},d_{2})\in D^{2}\mapsto(d_{2},0,d_{1})\in
D^{3}})(X\ast\lbrack X,Y]\ast Y)\\
&  =(\mathrm{id}_{M^{M}}\otimes W_{(d_{1},d_{2})\in D^{2}\mapsto(d_{2}%
,d_{1})\in D^{2}})(X\ast Y)
\end{align*}
and
\begin{align*}
&  (\mathrm{id}_{M}\otimes W_{\psi})(\mathrm{id}_{M^{M}}\otimes W_{(d_{1}%
,d_{2},d_{3})\in D^{3}\{(1,3),(2,3)\}\mapsto(d_{2},d_{3},d_{1})\in D^{3}%
})(X\ast\lbrack X,Y]\ast Y)\\
&  =(\mathrm{id}_{M}\otimes W_{(d_{1},d_{2})\in D^{2}\mapsto(d_{2},d_{1}%
d_{2},d_{1})\in D^{3}})(X\ast\lbrack X,Y]\ast Y)\\
&  =(\mathrm{id}_{M}\otimes W_{(d_{1},d_{2})\in D^{2}\mapsto(d_{2}%
,-d_{2},d_{1},d_{2},-d_{1},d_{1})\in D^{6}})(X\ast X\ast Y\ast X\ast Y\ast
Y)\\
&  =(\circ_{M^{M}}\otimes\mathrm{id}_{W_{D^{2}}})((\mathrm{id}_{M}\otimes
W_{(d_{1},d_{2})\in D^{2}\mapsto(d_{2},-d_{2})\in D^{2}})(Y\ast Y),Y\ast X,\\
&  (\mathrm{id}_{M}\otimes W_{(d_{1},d_{2})\in D^{2}\mapsto(-d_{1},d_{1})\in
D^{2}})(X\ast X))\\
&  \text{[by the bifunctionality of }\otimes\text{]}\\
&  =(\circ_{M^{M}}\otimes\mathrm{id}_{W_{D^{2}}})(I_{2},Y\ast X,I_{2})\\
&  =Y\ast X
\end{align*}
we have
\begin{align*}
&  Y\ast X\overset{\cdot}{-}(\mathrm{id}_{M^{M}}\otimes W_{(d_{1},d_{2})\in
D^{2}\mapsto(d_{2},d_{1})\in D^{2}})(X\ast Y)\\
&  =(\mathrm{id}_{M}\otimes W_{d\in D\mapsto(0,0,d)\in D^{3}\{(1,3),(2,3)\}}%
)(\mathrm{id}_{M^{M}}\otimes W_{(d_{1},d_{2},d_{3})\in D^{3}%
\{(1,3),(2,3)\}\mapsto(d_{2},d_{3},d_{1})\in D^{3}})\\
&  (X\ast\lbrack X,Y]\ast Y)\\
&  =(\mathrm{id}_{M^{M}}\otimes W_{d\in D\mapsto(0,d,0)\in D^{3}}%
)(X\ast\lbrack X,Y]\ast Y)\\
&  =[X,Y]
\end{align*}
Thus we are done.
\end{proof}

The following proposition should be obvious.

\begin{proposition}
For any $X\in\aleph(M)$ and any $\gamma_{1},\gamma_{2}\in\aleph^{2}(M)$ with
$(\mathrm{id}_{M^{M}}\otimes W_{i_{D(2)}^{D^{2}}})(\gamma_{1})=(\mathrm{id}%
_{M^{M}}\otimes W_{i_{D(2)}^{D^{2}}})(\gamma_{2})$, we have the following:
\begin{align*}
\gamma_{1}\ast X|_{D^{3}\{(2,3)\}}  &  =\gamma_{2}\ast X|_{D^{3}\{(2,3)\}}\\
X\ast\gamma_{1}|_{D^{3}\{(1,2)\}}  &  =X\ast\gamma_{2}|_{D^{3}\{(1,2)\}}\\
\gamma_{1}\ast X\overset{\cdot}{\underset{1}{-}}\gamma_{2}\ast X  &
=(\gamma_{1}\overset{\cdot}{-}\gamma_{2})\ast X\\
X\ast\gamma_{1}\overset{\cdot}{\underset{3}{-}}X\ast\gamma_{2}  &
=(\mathrm{id}_{M^{M}}\otimes W_{\tau_{(12)}})(X\ast(\gamma_{1}\overset{\cdot
}{-}\gamma_{2}))
\end{align*}

\end{proposition}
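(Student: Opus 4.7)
All four identities admit a uniform proof by pushing a map of the form $\mathrm{id}_{M^M}\otimes W_f$ across the $\circ_{M^M}$-composition defining $\ast$ via the bifunctoriality of $\otimes$ (the same device used throughout Sections 4 and 5), and then invoking the universal property characterizing the relevant encoder $g_{(\gamma_2,\gamma_1)}$, $g^1$, or $g^3$. The hypothesis $(\mathrm{id}_{M^M}\otimes W_{i_{D(2)}^{D^2}})(\gamma_1)=(\mathrm{id}_{M^M}\otimes W_{i_{D(2)}^{D^2}})(\gamma_2)$ is the precondition for existence of $g_{(\gamma_2,\gamma_1)}$ and is also what is needed to make sense of the strong differences that appear on the right-hand sides.

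For the first identity, apply $\mathrm{id}_{M^M}\otimes W_{i^{D^3}_{D^3\{(2,3)\}}}$ to $\gamma_i\ast X$. Bifunctoriality rewrites this as a $\circ_{M^M}$-composite of $X$ restricted along $(d_1,d_2,d_3)\in D^3\{(2,3)\}\mapsto d_1\in D$ with $\gamma_i$ restricted along $(d_1,d_2,d_3)\in D^3\{(2,3)\}\mapsto (d_2,d_3)\in D^2$. The latter map factors through $D(2)\hookrightarrow D^2$ because $d_2d_3=0$ in $D^3\{(2,3)\}$, so the hypothesis on $\gamma_1,\gamma_2$ forces the two restrictions to coincide, giving identity 1. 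Identity 2 is symmetric, using the projection $(d_1,d_2,d_3)\mapsto(d_1,d_2)$ out of $D^3\{(1,2)\}$, which factors through $D(2)$ for the same reason.

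For the third identity, set $g:=g_{(\gamma_2,\gamma_1)}\in M^M\otimes W_{D^3\{(1,3),(2,3)\}}$ and form
\[
\tilde g := (\circ_{M^M}\otimes\mathrm{id}_{W_{D^4\{(2,4),(3,4)\}}})\bigl((\mathrm{id}_{M^M}\otimes W_\rho)(X),\,(\mathrm{id}_{M^M}\otimes W_\pi)(g)\bigr),
\]
with $\rho\colon(d_1,d_2,d_3,d_4)\mapsto d_1$ and $\pi\colon(d_1,d_2,d_3,d_4)\mapsto(d_2,d_3,d_4)$; the map $\pi$ is valid into $D^3\{(1,3),(2,3)\}$ because $d_2d_4=d_3d_4=0$ already hold in $D^4\{(2,4),(3,4)\}$. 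Using bifunctoriality together with the pointwise identities $\pi\circ\varphi_1^3=\varphi\circ p^{D^3}_{D^2}$ and $\pi\circ\psi_1^3=\psi\circ p^{D^3}_{D^2}$, one checks that $(\mathrm{id}_{M^M}\otimes W_{\varphi_1^3})(\tilde g)=\gamma_2\ast X$ and $(\mathrm{id}_{M^M}\otimes W_{\psi_1^3})(\tilde g)=\gamma_1\ast X$; uniqueness in Proposition \ref{t5.5} then gives $\tilde g = g^1_{(\gamma_2\ast X,\gamma_1\ast X)}$. Applying $(d_1,d_2)\mapsto(d_1,0,0,d_2)$ and pushing across bifunctoriality once more rewrites $\gamma_1\ast X\overset{\cdot}{\underset{1}{-}}\gamma_2\ast X$ as the $\circ_{M^M}$-composite of $X$ along the first coordinate of $D^2$ with $(\mathrm{id}_{M^M}\otimes W_{d\mapsto(0,0,d)})(g)=\gamma_1\overset{\cdot}{-}\gamma_2$ along the second, which by the definition of $\ast$ is precisely $(\gamma_1\overset{\cdot}{-}\gamma_2)\ast X$.

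The fourth identity runs along the same lines using Proposition \ref{t5.7}, the encoder $g^3$, and the projections $\rho\colon(d_1,d_2,d_3,d_4)\mapsto d_3$, $\pi\colon(d_1,d_2,d_3,d_4)\mapsto(d_1,d_2,d_4)$ out of $D^4\{(1,4),(2,4)\}$. The extracted strong difference is an element of $M^M\otimes W_{D^2}$ whose first coordinate is inherited from $X$ (via $\rho$) and whose second coordinate records the strong-difference direction; on the other hand, $X\ast(\gamma_1\overset{\cdot}{-}\gamma_2)$ places its first factor $\gamma_1\overset{\cdot}{-}\gamma_2$ on the first coordinate and $X$ on the second, so the two sides differ by exactly the transposition $\tau_{(12)}$ on $D^2$, which is precisely what the formula asserts. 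The only real work anywhere in the proposition is the coordinate bookkeeping — keeping track of which $D$-factor of each intermediate simplicial Weil algebra parametrizes which of $X$, $\gamma_i$, or the strong-difference direction — and I expect this, rather than any conceptual subtlety, to be the sole mild obstacle; no new ideas beyond bifunctoriality of $\otimes$ are required.
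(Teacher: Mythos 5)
Your proof is correct, and it is exactly the argument the paper's machinery intends: the paper itself offers no proof at all (it merely declares the proposition "obvious"), and your unwinding via bifunctoriality of $\otimes$, the factorization of the relevant restriction maps through $D(2)$, and the uniqueness clauses of Propositions \ref{t5.1}, \ref{t5.5} and \ref{t5.7} identifying $\tilde g$ with $g^{1}_{(\gamma_2\ast X,\gamma_1\ast X)}$ and $g^{3}_{(X\ast\gamma_2,X\ast\gamma_1)}$ supplies precisely the missing details. The coordinate bookkeeping in all four identities checks out, including the transposition $\tau_{(12)}$ in the last one.
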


\begin{theorem}
For any $X,Y,Z\in\aleph(M)$, we have
\[
\lbrack X,[Y,Z]]+[Y,[Z,X]]+[Z,[X,Y]]=0
\]

\end{theorem}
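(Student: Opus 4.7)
The plan is to deduce the ordinary Jacobi identity from the \emph{general} Jacobi identity (Theorem \ref{t5.8}) by exhibiting, for each permutation $(ijk)$ of $(1,2,3)$, a carefully chosen element $\gamma_{ijk}\in M^{M}\otimes W_{D^{3}}$ assembled from $\ast$-products of $X$, $Y$ and $Z$, so that the three iterated strong differences appearing in Theorem \ref{t5.8} match (up to sign) the three iterated brackets $[X,[Y,Z]]$, $[Y,[Z,X]]$ and $[Z,[X,Y]]$. Once this matching is established, the claimed identity follows by a single application of Theorem \ref{t5.8}.

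Writing $X_{1}=X$, $X_{2}=Y$, $X_{3}=Z$, the intended prescription is
\[
\gamma_{ijk}=(\mathrm{id}_{M^{M}}\otimes W_{\pi_{ijk}})(X_{k}\ast X_{j}\ast X_{i}),
\]
where $\pi_{ijk}:D^{3}\to D^{3}$ is the unique coordinate permutation that places the tangent parameter of $X_{l}$ into the $l$-th $D$-slot. Thus the triple $(ijk)$ dictates only the order in which $X_{1},X_{2},X_{3}$ are composed, while the convention $d_{l}\leftrightarrow X_{l}$ is kept uniformly across all six $\gamma$'s. With this choice, within each of the pairs $(\gamma_{123},\gamma_{132})$ and $(\gamma_{231},\gamma_{321})$ the tangent parameter of $X$ sits at $d_{1}$, so the strong-1-differences $\gamma_{123}\overset{\cdot}{\underset{1}{-}}\gamma_{132}$ and $\gamma_{231}\overset{\cdot}{\underset{1}{-}}\gamma_{321}$ are both well defined; analogous statements hold for the other two expressions with $\overset{\cdot}{\underset{2}{-}}$ and $\overset{\cdot}{\underset{3}{-}}$.

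The substantive step is the identification
\[
[X,[Y,Z]]=(\gamma_{123}\overset{\cdot}{\underset{1}{-}}\gamma_{132})\overset{\cdot}{-}(\gamma_{231}\overset{\cdot}{\underset{1}{-}}\gamma_{321})
\]
(and cyclically for the other two brackets). To prove it, I apply the first theorem of Section 6, which gives $[U,V]=V\ast U\overset{\cdot}{-}(\mathrm{id}_{M^{M}}\otimes W_{\tau})(U\ast V)$ with $\tau$ the swap of $D^{2}$, to both $[Y,Z]$ and $[X,[Y,Z]]$; then invoke the proposition immediately preceding the theorem to distribute $\ast$ over strong differences. Specifically, the rule $(\gamma_{1}\overset{\cdot}{-}\gamma_{2})\ast X=\gamma_{1}\ast X\overset{\cdot}{\underset{1}{-}}\gamma_{2}\ast X$ turns $[Y,Z]\ast X$ into a strong-1-difference of two threefold $\ast$-products, and the dual rule on the left (modulo the transposition $\tau_{(12)}$) turns $\tau(X\ast[Y,Z])$ likewise into a strong-1-difference after re-indexing the $D$-coordinates so that the parameter of $X$ ends up at $d_{1}$. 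Matching the resulting six threefold $\ast$-products with the six $\gamma_{ijk}$ via the permutations $\pi_{ijk}$ gives the displayed identity. Running the same argument with $X,Y,Z$ cyclically rotated yields the second expression as $[Y,[Z,X]]$ and the third as $[Z,[X,Y]]$.

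The main obstacle is purely bookkeeping: fixing the six coordinate permutations $\pi_{ijk}$ so that the three pairwise compatibilities required for the inner strong differences to exist are all met simultaneously, and so that each of the three identifications above holds exactly rather than up to a transposition or sign. This is a combinatorial exercise in tracking how $\tau$ and $\tau_{(12)}$ interact with the shifts produced by $\ast$, but once the permutations are correctly normalised, the rest of the argument is a direct application of Theorem \ref{t5.8}.
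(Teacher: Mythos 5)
Your proposal follows essentially the same route as the paper: you introduce the same six elements $\gamma_{ijk}\in M^{M}\otimes W_{D^{3}}$ (permuted threefold $\ast$-products of $X$, $Y$, $Z$ normalised so that the parameter of $X_{l}$ sits in the $l$-th slot), identify the three iterated strong differences of Theorem \ref{t5.8} with $[X,[Y,Z]]$, $[Y,[Z,X]]$ and $[Z,[X,Y]]$ via the formula $[U,V]=V\ast U\overset{\cdot}{-}(\mathrm{id}_{M^{M}}\otimes W_{\tau})(U\ast V)$ together with the proposition on distributing $\ast$ over strong differences, and then conclude by the general Jacobi identity. This is exactly the paper's argument (indeed your sketch of how to verify the three identifications is slightly more explicit than the paper's, which merely asserts them), so the proposal is correct.
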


\begin{proof}
we define $\gamma_{123},\gamma_{132},\gamma_{213},\gamma_{231},\gamma
_{312},\gamma_{321}\in\aleph^{3}(M)$ as follows:
\begin{align*}
\gamma_{123}  &  =Z\ast Y\ast X\\
\gamma_{132}  &  =(\mathrm{id}_{M^{M}}\otimes W_{(d_{1},d_{2},d_{3})\in
D^{3}\mapsto(d_{1},d_{3},d_{2})\in D^{3}})(Y\ast Z\ast X)\\
\gamma_{213}  &  =(\mathrm{id}_{M^{M}}\otimes W_{(d_{1},d_{2},d_{3})\in
D^{3}\mapsto(d_{2},d_{1},d_{3})\in D^{3}})(Z\ast X\ast Y)\\
\gamma_{231}  &  =(\mathrm{id}_{M^{M}}\otimes W_{(d_{1},d_{2},d_{3})\in
D^{3}\mapsto(d_{2},d_{3},d_{1})\in D^{3}})(X\ast Z\ast Y)\\
\gamma_{312}  &  =(\mathrm{id}_{M^{M}}\otimes W_{(d_{1},d_{2},d_{3})\in
D^{3}\mapsto(d_{3},d_{1},d_{2})\in D^{3}})(Y\ast X\ast Z)\\
\gamma_{321}  &  =(\mathrm{id}_{M^{M}}\otimes W_{(d_{1},d_{2},d_{3})\in
D^{3}\mapsto(d_{3},d_{2},d_{1})\in D^{3}})(X\ast Y\ast Z)
\end{align*}
Then the right-hand sides of the following three identities are meaningful,
and all the three identities hold:
\begin{align*}
\lbrack X,[Y,Z]]  &  =(\gamma_{123}\overset{\cdot}{\underset{1}{-}}%
\gamma_{132})\overset{\cdot}{-}(\gamma_{231}\overset{\cdot}{\underset{1}{-}%
}\gamma_{321})\\
\lbrack Y,[Z,X]]  &  =(\gamma_{231}\overset{\cdot}{\underset{2}{-}}%
\gamma_{213})\overset{\cdot}{-}(\gamma_{312}\overset{\cdot}{\underset{2}{-}%
}\gamma_{132})\\
\lbrack Z,[X,Y]]  &  =(\gamma_{312}\overset{\cdot}{\underset{3}{-}}%
\gamma_{321})\overset{\cdot}{-}(\gamma_{123}\overset{\cdot}{\underset{3}{-}%
}\gamma_{213})
\end{align*}
Therefore the desired result follows from the general Jacobi identity.
\end{proof}

\end{document}